\documentclass[11pt]{amsart}

\usepackage{amsmath, amssymb, amsthm, amsfonts, enumerate, color, comment}
\usepackage{mathrsfs}
\usepackage{parskip}


%

\usepackage{mathtools}
\mathtoolsset{showonlyrefs}

\usepackage{palatino}
\usepackage{graphicx}

\usepackage{parskip}

\numberwithin{equation}{section}
\theoremstyle{plain}
\newtheorem{Proposition}[equation]{Proposition}
\newtheorem{Corollary}[equation]{Corollary}
\newtheorem*{Corollary*}{Corollary}
\newtheorem{Theorem}[equation]{Theorem}
\newtheorem*{Theorem*}{Theorem}
\newtheorem{Lemma}[equation]{Lemma}
\theoremstyle{definition}
\newtheorem{Definition}[equation]{Definition}

\newtheorem{Example}[equation]{Example}
\newtheorem{Remark}[equation]{Remark}

\usepackage{enumitem}
\setlist[enumerate]{leftmargin=*}
\setlist[itemize]{leftmargin=*}

\setlist[enumerate,1]{label=(\alph*),font=\upshape}

\setlist[enumerate,2]{label=(\roman*),font=\upshape}

\def\C{\mathbb{C}}
\def\R{\mathbb{R}}

\def\T{\mathbb{T}}
\def\N{\mathbb{N}}
\def\Z{\mathbb{Z}}

\def\K{\mathcal{K}}

\def\m{\mathcal{M}}

\newcommand{\h}{\mathcal{H}}

\newcommand{\M}{\emph{\textbf{M}}}
\newcommand{\LL}{\emph{\textbf{L}}}
\newcommand{\II}{\mathcal{I}}
\newcommand{\U}{\emph{\textbf{U}}}
\newcommand{\CC}{\emph{\textbf{C}}}

\newcommand{\CCC}{\mathfrak{C}}
\newcommand{\f}{\emph{\textbf{f}}}
\newcommand{\J}{\emph{\textbf{J}}}
\newcommand{\I}{\emph{\textbf{I}}}

\newcommand{\A}{\emph{\textbf{A}}}

\renewcommand{\leq}{\leqslant}
\renewcommand{\geq}{\geqslant}
\renewcommand{\subset}{\subseteq}
\renewcommand{\phi}{\varphi}
\renewcommand{\vec}[1]{{\bf #1}}

\DeclareMathOperator{\Arg}{\text{\rm Arg}}
\DeclareMathOperator{\sgn}{\text{\rm sgn}}
\usepackage{xcolor}

\author[J. Mashreghi]{Javad Mashreghi}
\address{D\'epartement de math\'ematiques et de statistique, Universit\'e Laval, Qu\'ebec, QC,
Canada, G1K 0A6}
\email{javad.mashreghi@mat.ulaval.ca}

\author[M. Ptak]{Marek Ptak}
\address{Department of Applied Mathematics,
University of Agriculture, ul. Balicka 253c\\ 30-198 Krak\'ow, Poland.}
\email{rmptak@cyf-kr.edu.pl}

\author[W. Ross]{William T. Ross}
	\address{Department of Mathematics and Computer Science, University of Richmond, Richmond, VA 23173, USA}
	\email{wross@richmond.edu}
	
	\subjclass[2010]{ 47B35, 47B02, 47A05}

\title{Conjugations of unitary operators, II}

\keywords{Complex symmetric operators, unitary operators }

\thanks{This work was supported by the NSERC Discovery Grant (Canada) and by the Ministry of Science and
Higher Education of the Republic of Poland.}

\begin{document}

\begin{abstract}
For a given unitary operator $U$ on a separable complex Hilbert space $\h$, we describe the set $\mathscr{C}_{c}(U)$  of all conjugations $C$ (antilinear, isometric, and involutive maps) on $\h$ for which $C U C = U$. As this set might be empty, we also show that $\mathscr{C}_{c}(U) \not = \varnothing$ if and only if $U$ is unitarily equivalent to $U^{*}$.
\end{abstract}

\maketitle

\maketitle

\section{Introduction}

This is the second in a series of two papers that  explore conjugations of unitary operators on separable complex Hilbert spaces. The first paper \cite{MPRCOUI} in this series  explored, for a given unitary operator $U$ on a Hilbert space $\h$, the antilinear, isometric, and involutive maps $C$ on $\h$, i.e.,  {\em conjugations}, for which $C U C = U^{*}$. An argument with the spectral theorem says there will always be a conjugation $C$ with this property. Moreover, \cite{MPRCOUI} contains several characterizations of the set of {\em all} such conjugations $C$ for which $C U C = U^{*}$. These conjugations are known as the ``symmetric conjugations'' for $U$.

The purpose of this second paper is to explore, for a given unitary operator $U$ on $\h$, the set
\begin{equation}\label{Comspodifposerccc}
\mathscr{C}_c(U) := \{\mbox{$C$ is a conjugation on $\mathcal{H}$}: C U C = U\}.
\end{equation}
These are known as the ``commuting conjugations'' for $U$. The subscript $c$ in the definition of $\mathscr{C}_{c}(U)$ might initially seem superfluous but we will use it anyway  to distinguish this set from $\mathscr{C}_{s}(U)$ (notice the $s$ in the subscript), the ``symmetric conjugations'' mentioned in the previous paragraph.
For an easy example of a commuting conjugation, consider the unitary operator $(U f)(\xi) = \xi f(\xi)$, the bilateral shift on $L^2(m, \T)$, where $m$ is normalized Lebesgue measure on the unit circle $\T$. One can check that the map
$$(J f)(\xi) = \overline{f(\overline{\xi})}$$ on $L^2(m, \T)$ defines a conjugation which satisfies  $J U J = U$.  Moreover (see Example \ref{sdjkfksdlfkHHGGHVGBVGHBGH}),  any conjugation $C$ on $L^2(m, \T)$ for which $C U C = U$ takes the form $(C f)(\xi) = u(\xi) (J f)(\xi)$, where $u \in L^{\infty}(m, \T)$ is both unimodular  and satisfies 
$u(\xi) = u(\overline{\xi})$ almost everywhere on $\T$. An analogous result holds when  $(U \f)(\xi) = \xi \f(\xi)$ on the vector-valued Lebesgue space $\mathscr{L}^2(m, \h)$ (Theorem \ref{jjHHbbhGHJKK}) but not always on $\mathscr{L}^2(\mu, \h)$ for a general positive measure on $\T$ (see \S \ref{section3} and the discussion below).

The  first issue one needs to resolve is whether, for a given unitary operator $U$ on $\h$, there are {\em any} conjugations $C$ for which $C U C = U$.  Indeed, using the known fact from \cite{MR0190750} (see also Proposition \ref{GL} below)  that any unitary operator can be written as a composition of two conjugations, one can fashion a quick argument (see Lemma \ref{uustar}) to see that if $\mathscr{C}_{c}(U) \not = \varnothing$, then $U \cong U^{*}$ (i.e., $U$ is unitarily equivalent to its adjoint $U^{*}$). One of the main results of this paper (Corollary \ref{uustar1})  is the converse.

\begin{Theorem}\label{fdsgdfuuuuU}
For a unitary operator $U$ on a complex separable Hilbert space $\h$, the following are equivalent.
\begin{enumerate}
\item $\mathscr{C}_{c}(U) \not = \varnothing$;
\item $U \cong U^{*}$.
\end{enumerate}
\end{Theorem}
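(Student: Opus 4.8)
The implication (a)$\Rightarrow$(b) is exactly Lemma \ref{uustar}, so the content of the theorem is the converse (b)$\Rightarrow$(a), and this is what I would prove. The plan is to put $U$ into a concrete spectral model, translate the hypothesis $U\cong U^{*}$ into a symmetry of the spectral data under the involution $\tau(\xi)=\bar\xi$ of $\T$, and then build a commuting conjugation by hand, piece by piece, importing from \cite{MPRCOUI} the one nontrivial ingredient — that every unitary admits a symmetric conjugation.

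\textbf{Setting up the model and reading the hypothesis.} By the spectral theorem with multiplicity, $U$ is unitarily equivalent to multiplication by the coordinate function $\xi$ on a direct integral $\h_{\mu}:=\int^{\oplus}_{\T}\C^{m(\xi)}\,d\mu(\xi)$, where $\mu$ is a finite positive Borel measure on $\T$ (a scalar spectral measure for $U$) and $m\colon\T\to\{1,2,\dots,\infty\}$ is a measurable multiplicity function; write $U\cong M_{\xi}$, so $U^{*}\cong M_{\bar\xi}$ on the same space. Pushing forward along $\tau(\xi)=\bar\xi$ identifies $M_{\bar\xi}$ on $\h_{\mu}$ with $M_{\xi}$ on $\h_{\tau_{*}\mu}$ equipped with multiplicity function $m\circ\tau$. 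By the uniqueness part of multiplicity theory, $U\cong U^{*}$ is therefore \emph{equivalent} to the statement that $\mu$ and $\tau_{*}\mu$ are mutually absolutely continuous and $m=m\circ\tau$ $\mu$-almost everywhere. Next, split $\T=\T_{+}\sqcup\T_{-}\sqcup\{1,-1\}$ into the open upper and lower semicircles and the $\tau$-fixed set; $\tau$ interchanges $\T_{+}$ and $\T_{-}$. This gives a reducing decomposition $\h_{\mu}=\h_{+}\oplus\h_{-}\oplus\h_{0}$ for $U$ built from $\mu$ and $m$ restricted to $\T_{+}$, $\T_{-}$, and $\{1,-1\}$, and from the symmetry just noted we get $\mu|_{\T_{-}}\sim\tau_{*}(\mu|_{\T_{+}})$ and $m|_{\T_{-}}\circ\tau=m|_{\T_{+}}$.

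\textbf{Building the conjugation.} On $\h_{0}$ the operator $U$ acts as $I$ on the summand over $\{1\}$ and as $-I$ on the summand over $\{-1\}$; since every separable complex Hilbert space carries a conjugation and $C(\pm I)C=\pm I$ for every conjugation $C$, choosing a conjugation on each summand and taking the direct sum gives a conjugation $C_{0}$ on $\h_{0}$ with $C_{0}UC_{0}=U$. On $\h_{+}\oplus\h_{-}$, the relations $\mu|_{\T_{-}}\sim\tau_{*}(\mu|_{\T_{+}})$ and $m|_{\T_{-}}\circ\tau=m|_{\T_{+}}$ furnish a unitary $\Phi\colon\h_{-}\to\h_{+}$ with $\Phi\,(M_{\xi}|_{\h_{-}})\,\Phi^{-1}=M_{\bar\xi}$ on $\h_{+}$; conjugating by $I\oplus\Phi$ identifies $U|_{\h_{+}\oplus\h_{-}}$ with $M_{\xi}\oplus M_{\bar\xi}$ on $\h_{+}\oplus\h_{+}$. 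Now pick a symmetric conjugation $D$ for $M_{\xi}$ on $\h_{+}$, that is, a conjugation with $DM_{\xi}D=M_{\xi}^{*}=M_{\bar\xi}$ — such $D$ exists for every unitary by \cite{MPRCOUI} (cf. Proposition \ref{GL} and the discussion following $\mathscr{C}_{s}(U)$) — and set $C_{+}(f,g):=(Dg,Df)$ on $\h_{+}\oplus\h_{+}$. A direct check shows $C_{+}$ is antilinear, isometric, and involutive, and that $C_{+}(M_{\xi}\oplus M_{\bar\xi})C_{+}=M_{\xi}\oplus M_{\bar\xi}$, the needed identity $DM_{\bar\xi}D=M_{\xi}$ being just $DM_{\xi}D=M_{\bar\xi}$ conjugated by $D$. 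Pulling $C_{+}$ back through $I\oplus\Phi$ gives a conjugation $C_{1}$ on $\h_{+}\oplus\h_{-}$ with $C_{1}UC_{1}=U$, and finally $C:=C_{1}\oplus C_{0}$ on $\h_{\mu}$, transported through the equivalence $U\cong M_{\xi}$, is an element of $\mathscr{C}_{c}(U)$, proving (a).

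\textbf{Where the work is.} The routine calculations (involutivity of $C_{+}$, the intertwining identities, transport through unitary equivalences) are immediate; the real content sits in the multiplicity-theory bookkeeping — identifying $U\cong U^{*}$ with the $\tau$-symmetry of $(\mu,m)$ and producing the transfer unitary $\Phi$ — and this is precisely the step that consumes the hypothesis. A secondary point that needs care, but causes no genuine difficulty, is the handling of possible atoms of $\mu$ at $\pm 1$ and of infinite multiplicity. It is worth noting that the purely abstract route — take $J$ with $JUJ=U^{*}$ and a unitary $W$ with $WUW^{*}=U^{*}$, so that $A:=JW$ is an antiunitary commuting with $U$, and then try to ``correct'' $A$ by a unitary square root of $A^{2}$ inside the abelian von Neumann algebra generated by $U$ and $A^{2}$ — runs into an honest obstruction concentrated on the $\tau$-fixed set $\{1,-1\}$ (one cannot solve $b^{2}=-1$ subject to $\bar b=b$ at a fixed point), and it is exactly this obstruction that the geometric decomposition $\T=\T_{+}\sqcup\T_{-}\sqcup\{1,-1\}$ sidesteps, since over $\{1,-1\}$ the operator $U$ is $\pm I$ and any conjugation commutes with it.
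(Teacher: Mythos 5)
Your proposal is correct, and its backbone is the same as the paper's: the implication (a)$\Rightarrow$(b) is Lemma \ref{uustar}, and for (b)$\Rightarrow$(a) you, like the paper, run $U\cong U^{*}$ through the spectral theorem with multiplicity and its uniqueness statement to convert the hypothesis into reflection symmetry of the spectral data (compare Theorem \ref{spectraltheorem}, Remark \ref{r5.2}, Theorem \ref{th1.2b}). Where you genuinely diverge is in how the commuting conjugation is then manufactured. The paper (Corollary \ref{uustar1}) never decomposes the circle: on each multiplicity summand $\mathscr{L}^{2}(\mu_i,\h_i)$ it writes down the single weighted flip $(\J^{\#}\f)(\xi)=h_i(\xi)^{1/2}J(\f(\bar\xi))$ with $h_i=d\mu_i^{c}/d\mu_i$, and Proposition \ref{p3.2a} checks directly that this is a conjugation commuting with $\M_\xi$ --- the Radon--Nikodym weight absorbs the measure asymmetry, the points $\pm 1$ need no separate treatment, and the construction sits inside Theorem \ref{th1.2a}, which parametrizes all of $\mathscr{C}_{c}(U)$ at once. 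You instead split $\T=\T_{+}\sqcup\T_{-}\sqcup\{1,-1\}$, move the lower half onto the upper half by a transfer unitary $\Phi$ (whose existence is again the uniqueness theorem; one could also write it explicitly with the same Radon--Nikodym weight), and assemble the conjugation as the swap $(f,g)\mapsto(Dg,Df)$ from a symmetric conjugation $D$ with $DM_{\xi}D=M_{\bar\xi}$, quoted from Proposition \ref{GL}, together with an arbitrary conjugation over $\{1,-1\}$. This is a legitimate alternative: it makes very visible where the hypothesis is consumed and why the $\tau$-fixed points are harmless, at the price of extra bookkeeping (the transfer unitary and the three-piece assembly) and of yielding only existence rather than the description of $\mathscr{C}_{c}(U)$; indeed your swap is the paper's $\J^{\#}$ in disguise, since the flip $\xi\mapsto\bar\xi$ composed with a fiberwise conjugation exchanges exactly the parts of the space living over $\T_{+}$ and $\T_{-}$.
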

Notice how condition (b) in Theorem \ref{fdsgdfuuuuU} places some restrictions on the class of unitary operators which have commuting conjugations in that, at the very least, the spectrum $\sigma(U)$ of $U$ must be symmetric with respect to the real axis, i.e., $\lambda \in \sigma(U)$ if and only if $\overline{\lambda} \in \sigma(U)$ (since if $U \cong U^{*}$ then $\sigma(U) = \sigma(U^{*}) = \overline{\sigma(U)}$). Thus, as an example, for the bilateral shift $(U f)(\xi) = \xi f(\xi)$ on $L^2(\mu, \T)$,  where $\mu$ is a finite positive Borel measure on $\T$, a standard argument shows that
$\sigma(U) = \{\xi \in \T: \mu(I_{\delta}(\xi)) > 0$ for all $\delta > 0\}$ ($I_{\delta}(\xi)$ is the arc of the circle centered at $\xi$ of radius $\delta$). Thus, for example, if the measure $\mu$ is supported on the top half of $\T$, then there are {\em no} conjugations $C$ on $L^2(\mu, \T)$ for which $C U C = U$. Of course, one could also consider the easy example of a unitary matrix whose eigenvalues are not symmetric with respect to the real axis. 

For a unitary operator $U$ on a finite dimensional Hilbert space, where we can regard, via a matrix representation with respect to an orthonormal basis, $U$ as a unitary matrix,  we can use the linear algebra version of the spectral theorem to see that $U \cong U^{*}$ if and only if 
$$U = W 
\begin{bmatrix}
\begin{bmatrix}
\xi_1 I_{n_1} & \\
 & \overline{\xi_1} I_{n_1}
\end{bmatrix} & & \\
& 
 \ddots & & \\
& & & \begin{bmatrix}
\xi_d I_{n_d} & \\
 & \overline{\xi_d} I_{n_d}
\end{bmatrix}\\
& & & &
\begin{bmatrix}
I_{\ell} & \\
 & - I_{k}
\end{bmatrix}
\end{bmatrix}
W^{*},$$
where $W$ is a unitary matrix, $\xi_1, \ldots, \xi_d \in \T \setminus \{1, -1\}$ are distinct eigenvalues of $U$, $I_{m}$ detnoes the $m \times m$ identity matrix, and the block in the lower right corner might not appear, or might appear as just $I_{\ell}$ or just $-I_{k}$, depending on whether $1$ or $-1$ are eigenvalues of $U$. Of course $n_j$, $\ell$, and $k$ represent the  multiplicities of their respective eigenvalues.  As we will prove in Theorem \ref{osidfgisdufbbbBB}, such unitary matrices satisfy $\mathscr{C}_{c}(U) \not = \varnothing$ and every $C \in \mathscr{C}_{c}(U)$ takes the form 
$$C = W
\begin{bmatrix}
\begin{bmatrix}
&  V_{1} \\
 V_{1}^{t}  &
\end{bmatrix} & & \\
& 
 \ddots & & \\
& & & \begin{bmatrix}
&  V_{d} \\
V_{d}^{t} &  
\end{bmatrix}\\
& & & &
\begin{bmatrix}
Q_{\ell} & \\
 & Q_k
\end{bmatrix}
\end{bmatrix}
J W^{*},$$
where each $V_j$ is an $n_j \times n_j$ unitary matrix, $Q_{\ell}$, $Q_{k}$ are $\ell \times \ell$ and $k \times k$ (respectively) unitary matrices with $Q_{\ell}^{t} = Q_{\ell}$ and $Q_{k}^{t} = Q_{k}$ (in which only one or perhaps both might not appear depending whether $1$ or $-1$ are eigenvalues of $U$), and $J$ is the conjugation on $\C^n$ defined by $J \vec{x} = \overline{\vec{x}}$ (complex conjugating each of the entries of $\vec{x}$). 

Another basic type of unitary operator is $(U \f)(\xi) = \xi \f(\xi)$ on $\mathscr{L}^2(\mu, \h)$, where $\mu$ is a positive finite Borel measure on $\T$ and $\h$ is a Hilbert space (see \S \ref{section3} for the precise definitions). As discussed earlier, there might not be any commuting conjugations for $U$. In \S \ref{section3} we discuss the restrictions one must place on $\mu$ so that  $\mathscr{C}_{c}(U) \not = \varnothing$ and, when these conditions are satisfied,  describe $\mathscr{C}_{c}(U)$. Since the operators $\f(\xi) \mapsto \xi \f(\xi)$ on  these $\mathscr{L}^2(\mu, \h)$ spaces are the building blocks for any unitary operator on a general Hilbert space, via the spectral theorem, we describe the commuting conjugations (when they exist) for a general unitary operator in \S \ref{spectraltheoremre} (where we also prove Theorem \ref{fdsgdfuuuuU}).

A particularly interesting class of  unitary operators are the multiplication operators $M_{\psi} f= \psi f$ on $L^2(m, \T)$ where $\psi$ is an inner function. Here  one has the added connection to the theory of model spaces $H^2 \cap (\psi H^2)^{\perp}$ \cite{MR3526203}. As discussed in \cite{JMMPWR_OT28}, these multiplication operators serve as models for general bilateral shifts. In \S \ref{innerfunctions} we show that $M_{\psi}$ is unitarily equivalent to its adjoint (and hence $\mathscr{C}_{c}(M_{\psi}) \not = \varnothing$ via Theorem \ref{fdsgdfuuuuU}) and proceed to  give a concrete description of  $\mathscr{C}_{c}(M_{\psi})$ (Theorem \ref{089foidjgigifghjhjh77766}). 

In the last section of this paper, we work out a concrete description of $\mathscr{C}_{c}(\mathcal{F})$ for the classical Fourier--Plancherel transform $\mathscr{F}$ on $L^2(\R)$ (Example \ref{Foureoirtert1}) and a description of $\mathscr{C}_{c}(\mathscr{H})$ for the classical Hilbert transform $\mathscr{H}$  on $L^2(\R)$ (Example \ref{Hilertrt2}).

\section{Basics facts about conjugations}\label{basicfactsdsd}

{\em All Hilbert spaces $\h$ in this paper are separable and complex.} Let $\mathcal{B}(\h)$ denote the space of all bounded linear transformations on $\h$ and $\mathscr{A}\!\mathcal{B}(\mathcal{H})$ denote  the space of all {\em bounded antilinear transformations} on $\mathcal{H}$. By this we mean that
$C \in \mathscr{A}\!\mathcal{B}(\mathcal{H})$ when
$C(\vec{x} + \alpha \vec{y}) = C \vec{x} + \overline{\alpha} C \vec{y}$ for all $\vec{x}, \vec{y} \in \mathcal{H}$  and  $\alpha \in \C$ ($C$ is antilinear)
and  $\sup\{\|C \vec{x}\|: \|\vec{x}\| = 1\}$ is finite ($C$ is bounded).
We say that $C \in \mathscr{A}\!\mathcal{B}(\mathcal{H})$ is a {\em conjugation} if it satisfies the additional conditions that $\|C \vec{x}\| = \|\vec{x}\|$ for all $\vec{x} \in \mathcal{H}$ ($C$ is isometric) and $C^2 = I$ ($C$ is involutive).
By the polarization identity, a conjugation also satisfies
\begin{equation}\label{CCCCCCcccc}
\langle C \vec{x}, C \vec{y}\rangle = \langle \vec{y}, \vec{x}\rangle \; \mbox{for all $\vec{x}, \vec{y} \in \mathcal{H}.$}
\end{equation}
Conjugations play an important role in operator theory and were initially studied in  \cite{MR2186351, MR2198373,  MR3254868,MR2187654, MR2302518}. More recently, conjugations were explored in \cite{MR4083641, MR4169409, MR4493877, MR4502709, MR4133627, MR4077555}.

\begin{Example}\label{exkjsdf77y}
Many types of conjugations  were outlined in \cite{MR3254868, MR2187654, MR2302518}. Below are a few basic ones that are relevant to this paper.
\begin{enumerate}
\item The mapping $C f = \bar{f}$ defines a conjugation on a standard Lebesgue space $L^2(\mu, X)$. In particular, the mapping 
$$C [x_1 \; x_2 \; \cdots \; x_n]^{t} = [\overline{x_1} \; \overline{x_2} \; \cdots \; \overline{x_n}]^t$$ defines a conjugation on $\C^n$. Throughout this paper we will use the symbol $t$ to represent the transpose of a matrix. In addition,  vectors in $\C^n$ will be viewed  as column vectors since, for an $n \times n$ matrix $A$ of complex numbers, we will often consider linear transformations on $\C^n$ defined by $\vec{x} \mapsto A \vec{x}$.
\item The mapping $(C f)(\xi) = \overline{f(\overline{\xi})}$ defines a conjugation on $L^2(\mu, \T)$ for any finite positive Borel measure on $\T$. 
\item On $L^2(\R)$ one can consider the two conjugations
$(C f)(t) = \overline{f(t)}$ and $(C f)(t) = \overline{f(-t)}$.
These were used in  \cite{Bender_2007, BB1998} to study symmetric operators and their connections to physics.
\end{enumerate}
\end{Example}

This next lemma  enables us to transfer a conjugation on one Hilbert space to a conjugation on another. The (easy) proof is left to the reader.

\begin{Lemma}\label{lem1.2}
Suppose $\mathcal{H}$ and $\mathcal{K}$ are Hilbert spaces and $V: \mathcal{H} \to \mathcal{K}$ is a unitary operator. If $C$ is a conjugation on $\mathcal{H}$ then $VCV^{*}$ is a conjugation on $\mathcal{K}$.
\end{Lemma}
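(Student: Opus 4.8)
The plan is to verify directly the three defining properties of a conjugation for the map $W := V C V^{*} \colon \mathcal{K} \to \mathcal{K}$, namely that it is antilinear, isometric, and involutive. Since $V \colon \mathcal{H} \to \mathcal{K}$ is unitary, both $V$ and $V^{*} \colon \mathcal{K} \to \mathcal{H}$ are surjective isometries satisfying $V^{*}V = I_{\mathcal{H}}$ and $V V^{*} = I_{\mathcal{K}}$; these identities (together with $C^{2} = I_{\mathcal{H}}$ and the isometry of $C$) are the only inputs needed.

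For antilinearity, take $\vec{y}, \vec{z} \in \mathcal{K}$ and $\alpha \in \C$ and compute $W(\vec{y} + \alpha \vec{z}) = V C\big(V^{*}\vec{y} + \alpha V^{*}\vec{z}\big)$ by linearity of $V^{*}$, which equals $V\big(C V^{*}\vec{y} + \overline{\alpha}\, C V^{*}\vec{z}\big) = W\vec{y} + \overline{\alpha}\, W\vec{z}$ by antilinearity of $C$ and linearity of $V$; thus a linear map precomposed and postcomposed with an antilinear map is antilinear. For the isometry property, for $\vec{y} \in \mathcal{K}$ I would chain the three norm-preserving maps: $\|W\vec{y}\| = \|C V^{*}\vec{y}\| = \|V^{*}\vec{y}\| = \|\vec{y}\|$, using that $V$ preserves norms, then that $C$ is isometric, then that $V^{*}$ preserves norms. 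Alternatively one can record this through the sesquilinear identity \eqref{CCCCCCcccc}: $\langle W\vec{y}, W\vec{z}\rangle = \langle C V^{*}\vec{y}, C V^{*}\vec{z}\rangle = \langle V^{*}\vec{z}, V^{*}\vec{y}\rangle = \langle \vec{z}, \vec{y}\rangle$. For involutivity, compute $W^{2} = V C V^{*} V C V^{*} = V C (V^{*} V) C V^{*} = V C^{2} V^{*} = V V^{*} = I_{\mathcal{K}}$, inserting $V^{*} V = I_{\mathcal{H}}$ and $C^{2} = I_{\mathcal{H}}$.

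There is no genuine obstacle here; the lemma is essentially a bookkeeping exercise. The only point requiring mild care is tracking on which space each operator acts, so that $V^{*}V$ is read as the identity on $\mathcal{H}$ and $V V^{*}$ as the identity on $\mathcal{K}$, and using that a unitary between two (a priori distinct) Hilbert spaces has a two-sided inverse that is itself unitary.
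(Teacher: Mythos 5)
Your proof is correct and is exactly the routine verification the paper has in mind when it says the (easy) proof is left to the reader: check antilinearity, isometry, and involutivity of $VCV^{*}$ directly, using $V^{*}V = I_{\mathcal{H}}$, $VV^{*} = I_{\mathcal{K}}$, and the corresponding properties of $C$. Nothing further is needed.
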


\begin{Example}\label{JHKSJDF99}
We have already discussed the how the mapping $(C f)(\xi) = \overline{f(\overline{\xi})}$ on $L^2(m, \T)$ is a conjugation that commutes with the bilateral shift $(U f)(\xi) = \xi f(\xi)$. Here are a few other examples. 
\begin{enumerate}
\item The conjugation $(C f)(x) = \overline{f(x)}$ on $L^2(\R)$ commutes with the unitary operator $(U f)(x) = f(x - 1)$. This conjugation also commutes with the Hilbert transform. 
\item The conjugation $(C f)(x) = \overline{f(-x)}$ on $L^2(\R)$ commutes with the Fourier--Plancherel transform.
\end{enumerate}
\end{Example}

Recalling the definition of $\mathscr{C}_{c}(U)$ from \eqref{Comspodifposerccc}, let us make a few elementary observations. One can argue from  \eqref{CCCCCCcccc} that 
\begin{equation}\label{bsdv777ds77d7d7d73}
\mathscr{C}_{c}(U) = \mathscr{C}_{c}(U^{*}).
\end{equation}
Next we comment that the commuting conjugations are stable under unitary equivalence. 

\begin{Proposition}\label{bnjiovbhucfgyu}
Suppose $U, V, W$ are unitary operators on $\h$ such that $W U W^{*} = V$. Then $W \mathscr{C}_{c}(U) W^{*} = \mathscr{C}_{c}(V)$.
\end{Proposition}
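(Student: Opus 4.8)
The plan is to verify the set equality $W \mathscr{C}_{c}(U) W^{*} = \mathscr{C}_{c}(V)$ by a direct computation, using Lemma \ref{lem1.2} to handle the claim that the conjugated maps are still conjugations. The key observation is that the hypothesis $W U W^{*} = V$ is equivalent to $U = W^{*} V W$, and also (taking adjoints, since all operators are unitary) to $W U^{*} W^{*} = V^{*}$.

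First I would show the inclusion $W \mathscr{C}_{c}(U) W^{*} \subseteq \mathscr{C}_{c}(V)$. Take $C \in \mathscr{C}_{c}(U)$, so $C$ is a conjugation on $\h$ with $C U C = U$, and set $D := W C W^{*}$. By Lemma \ref{lem1.2}, $D$ is a conjugation on $\h$ (applied with $\mathcal{K} = \h$). It remains to check $D V D = V$. Writing this out,
\begin{equation}
D V D = W C W^{*} V W C W^{*} = W C (W^{*} V W) C W^{*} = W C U C W^{*} = W U W^{*} = V,
\end{equation}
where in the second-to-last step I used $C U C = U$ and in the last step the hypothesis. Note that the bookkeeping of linearity versus antilinearity is harmless here because $W$ and $W^{*}$ are linear and $C$ is antilinear, so $D = WCW^{*}$ is antilinear, and the manipulation $W C \cdot X \cdot C W^{*}$ with $X$ linear is formally identical to the linear case.

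For the reverse inclusion $\mathscr{C}_{c}(V) \subseteq W \mathscr{C}_{c}(U) W^{*}$, I would apply the inclusion already proved to the roles of $U$ and $V$ swapped, using the unitary $W^{*}$ in place of $W$: since $W^{*} V (W^{*})^{*} = W^{*} V W = U$, the first part gives $W^{*} \mathscr{C}_{c}(V) W \subseteq \mathscr{C}_{c}(U)$, and applying $W \cdot W^{*}$ on both sides yields $\mathscr{C}_{c}(V) \subseteq W \mathscr{C}_{c}(U) W^{*}$. Combining the two inclusions completes the proof.

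There is no real obstacle here; the only point requiring a moment's care is confirming that conjugating a conjugation by a unitary again yields a conjugation (isometric, antilinear, involutive), and that is precisely the content of Lemma \ref{lem1.2}, so it can be cited rather than re-proved. The rest is the algebraic identity $D V D = V \iff C U C = U$ under the substitution $D = WCW^{*}$, which is immediate from $W^{*}W = I$.
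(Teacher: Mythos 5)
Your proof is correct and is exactly the standard argument the paper has in mind (the paper states this proposition without proof as an elementary observation): conjugate $C\in\mathscr{C}_c(U)$ by $W$, invoke Lemma \ref{lem1.2} for the conjugation property, verify $DVD=V$ by the algebra $WCW^*VWCW^*=WCUCW^*=WUW^*=V$, and get the reverse inclusion by symmetry with $W^*$. Nothing is missing.
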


If $U$ is unitary and $C$ is a conjugation on $\h$, then $UC \in \mathscr{A}\!\mathcal{B}(\mathcal{H})$ and is isometric. This next result has a straightforward proof and determines when $UC$ is involutive and hence a conjugation.

\begin{Lemma}\label{csu}
  Let $U$ be a unitary operator and $C$ be a conjugation on $\mathcal{H}$. Then $UC$ is a conjugation if and only if $C U C = U^{*}$.
\end{Lemma}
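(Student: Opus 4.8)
The plan is to notice that, as remarked in the text just before the statement, $UC$ is automatically antilinear (a composition of the linear map $U$ with the antilinear map $C$) and isometric ($\|UC\vec{x}\| = \|C\vec{x}\| = \|\vec{x}\|$ for all $\vec{x} \in \h$). Consequently the only property left to analyze, for $UC$ to be a conjugation, is involutivity, i.e.\ $(UC)^{2} = I$. So the whole lemma reduces to establishing the equivalence $(UC)^{2} = I \iff CUC = U^{*}$.

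For that, I would simply compute $(UC)^{2}$ at the level of composition of maps. Associativity gives $(UC)^{2} = UCUC = U\,(CUC)$, and here the parenthesized factor $CUC$ is a bounded \emph{linear} operator: it is the composition of the antilinear map $UC$ with the antilinear map $C$ (equivalently, of $C$ with the antilinear map $CU$), and antilinear $\circ$ antilinear is linear. Since $U$ is invertible with $U^{-1} = U^{*}$, the identity $U\,(CUC) = I$ holds if and only if $CUC = U^{*}$. This delivers both implications simultaneously: if $CUC = U^{*}$ then $(UC)^{2} = U U^{*} = I$, so $UC$ is antilinear, isometric, and involutive, hence a conjugation; conversely, if $UC$ is a conjugation then $(UC)^{2} = I$ forces $U\,(CUC) = I$, hence $CUC = U^{*}$.

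I do not anticipate any real obstacle. The only point calling for a moment of care is bookkeeping with antilinearity — one must be sure no complex scalars are being conjugated in the wrong place — but since the computation above takes place purely at the level of operator composition, with no scalars appearing, this is immediate; it is also worth recording explicitly that $CUC$ is linear, so that ``$CUC = U^{*}$'' is a meaningful equality of bounded linear operators. One might further remark that the argument never invokes $C^{2} = I$ directly: the hypothesis that $C$ is a conjugation is used only to guarantee that $C$ is antilinear and isometric, which is exactly what makes $UC$ inherit those two properties.
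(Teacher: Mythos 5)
Your argument is correct and is exactly the straightforward computation the paper has in mind (the paper omits the proof, noting only that $UC$ is automatically antilinear and isometric, so everything reduces to involutivity): $(UC)^2 = U(CUC) = I$ iff $CUC = U^{*}$ by invertibility of $U$. Your side remarks — that $CUC$ is linear so the identity is well posed, and that $C^2 = I$ is never used — are accurate and harmless.
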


We recall the following result from \cite{MR0190750} (also see the proof of Proposition 2.5 from \cite{MPRCOUI}) which shows that any unitary operator can be built from conjugations.

\begin{Proposition}\label{GL}Let $U$ be a unitary operator on $\mathcal{H}$.  Then there are conjugations $J_1$ and $J_2$ on $\mathcal{H}$ such that $U=J_1J_2.$
Moreover,  $J_1 U J_1 = U^{*}$ and $J_2 U J_2 = U^{*}$.
\end{Proposition}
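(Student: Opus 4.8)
The plan is to reduce the statement, via the spectral theorem, to the model case of multiplication by $\xi$ on a vector-valued $L^2$-space, where a conjugation realizing $U$ as a product of two conjugations can be written down explicitly. First, using the spectral theorem for unitary operators (in its multiplicity form), I would fix a unitary $V \colon \h \to \mathscr{L}^2(\mu, \ell^2)$ (or a direct sum of such spaces) carrying $U$ to the operator $M_\xi \colon \f(\xi) \mapsto \xi \f(\xi)$. By Lemma~\ref{lem1.2}, it suffices to produce conjugations $\widetilde J_1, \widetilde J_2$ on $\mathscr{L}^2(\mu,\ell^2)$ with $M_\xi = \widetilde J_1 \widetilde J_2$ and $\widetilde J_i M_\xi \widetilde J_i = M_\xi^*$; then $J_i := V^* \widetilde J_i V$ will do the job on $\h$.

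The heart of the matter is therefore the model space. On $\mathscr{L}^2(\mu,\ell^2)$ consider the conjugation $(\widetilde J_1 \f)(\xi) := \overline{\f(\xi)}$ (coordinatewise complex conjugation of the $\ell^2$-valued function, which is a conjugation since $\mu$ is a positive measure on $\T$). A direct computation gives $(\widetilde J_1 M_\xi \widetilde J_1 \f)(\xi) = \overline{\xi \,\overline{\f(\xi)}} = \overline{\xi}\,\f(\xi) = (M_\xi^* \f)(\xi)$, so $\widetilde J_1 M_\xi \widetilde J_1 = M_\xi^*$. Now set $\widetilde J_2 := M_\xi^{*}\,\widetilde J_1 = \widetilde J_1 M_\xi$ (equivalently $\widetilde J_2 = \overline{M_\xi}\,\widetilde J_1$ composed appropriately); then $\widetilde J_1 \widetilde J_2 = \widetilde J_1 \widetilde J_1 M_\xi = M_\xi$, using $\widetilde J_1^2 = I$. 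It remains to check that $\widetilde J_2$ is indeed a conjugation: it is antilinear and isometric as a composition of a unitary and a conjugation, and involutivity follows from $\widetilde J_2^2 = \widetilde J_1 M_\xi \widetilde J_1 M_\xi = M_\xi^* M_\xi = I$, using the intertwining relation just established. Finally, $\widetilde J_2 M_\xi \widetilde J_2 = \widetilde J_1 M_\xi\, M_\xi\, \widetilde J_1 M_\xi = (\widetilde J_1 M_\xi^2 \widetilde J_1) M_\xi = M_\xi^{*2} M_\xi = M_\xi^*$, which gives the ``moreover'' clause for $\widetilde J_2$ (and the clause for $\widetilde J_1$ was already verified). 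Alternatively, one may avoid the spectral theorem entirely and argue abstractly: start from any conjugation $J_1$ with $J_1 U J_1 = U^*$ (whose existence is exactly the content of Lemma~\ref{csu} together with the spectral-theorem fact that some such $J_1$ exists), set $J_2 := J_1 U = U^* J_1$, and repeat the three one-line verifications above verbatim with $M_\xi$ replaced by $U$.

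The main obstacle is not any single computation — each of the required identities is a two- or three-term manipulation — but rather pinning down at the outset a single conjugation $J_1$ satisfying $J_1 U J_1 = U^*$; everything else is forced once $J_1$ is in hand, since $J_2$ is then defined by $J_2 = J_1 U$ and all four claimed relations reduce to the intertwining property of $J_1$ and the involutivity $J_1^2 = I$. Establishing the existence of such a $J_1$ is where one genuinely invokes the structure of unitary operators (the spectral/multiplicity theorem), and is the step I would write out most carefully; the paper's cross-reference to \cite{MR0190750} and to the proof of Proposition~2.5 in \cite{MPRCOUI} suggests this is precisely the point handled there, so I would either cite it or reproduce the short model-space argument of the preceding paragraph.
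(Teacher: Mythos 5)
Your proof is correct, and it follows essentially the same route as the classical Godi\v{c}--Lucenko argument that the paper invokes by citation (the paper itself gives no proof of Proposition \ref{GL}, deferring to \cite{MR0190750} and Proposition 2.5 of \cite{MPRCOUI}): use the spectral theorem to model $U$ as multiplication by $\xi$ on (a direct sum of) vector-valued $L^2$ spaces, take $J_1$ to be the natural pointwise conjugation there so that $J_1 U J_1 = U^*$, and set $J_2 = J_1 U$, after which all the stated identities follow from $J_1^2 = I$ and the intertwining relation. The verifications you give for $J_2$ being a conjugation and for $J_2 U J_2 = U^*$ are exactly right, so nothing further is needed.
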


In the introduction we showed that although every unitary operator $U$ satisfies $C U C = U^{*}$ with respect to some conjugation $C$, it is possible for $\mathscr{C}_{c}(U)$ (the commuting conjugations for $U$) to be the empty set. Below we begin to determine when this happens (and bring this discussion to fruition in Corollary \ref{uustar1}).

\begin{Lemma}\label{uustar}
If $U$ is a unitary operator on $\h$ and $\mathscr{C}_{c}(U) \not = \varnothing$,  then $U \cong U^{*}$.
 \end{Lemma}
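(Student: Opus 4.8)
The plan is to combine the hypothesis $\mathscr{C}_{c}(U) \neq \varnothing$ with the factorization of a unitary into two conjugations from Proposition \ref{GL}. First I would pick any $C \in \mathscr{C}_{c}(U)$, so that $CUC = U$ and $C$ is a conjugation. By Proposition \ref{GL}, write $U = J_1 J_2$ where $J_1, J_2$ are conjugations on $\h$ satisfying $J_1 U J_1 = U^{*}$ (and likewise for $J_2$). Now set $W := C J_1$. Since $C$ and $J_1$ are both antilinear isometries, their composition $W$ is a linear isometry, and because $C^2 = I = J_1^2$, a short check shows $W$ is in fact unitary (its inverse being $J_1 C$).

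The heart of the argument is the computation $W U W^{*} = U^{*}$. Writing $W = C J_1$ and $W^{*} = J_1 C$, we have $W U W^{*} = C J_1 U J_1 C = C U^{*} C$ using $J_1 U J_1 = U^{*}$. Then, since $CUC = U$ implies $C U^{*} C = (CUC)^{*} = U^{*}$ — here one uses that conjugating an operator by $C$ is a $*$-preserving operation on $\mathcal{B}(\h)$, i.e. $(CAC)^{*} = CA^{*}C$, which follows from \eqref{CCCCCCcccc} — we conclude $W U W^{*} = C U^{*} C = U^{*}$. Hence $U \cong U^{*}$.

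The only mild subtlety, and the step I would be most careful about, is verifying the antilinear bookkeeping: that the composite of two conjugations is unitary, and that $(CAC)^{*} = CA^{*}C$ for $A \in \mathcal{B}(\h)$ and a conjugation $C$. Both are standard consequences of \eqref{CCCCCCcccc}, but they are exactly the places where signs or adjoints could slip, so I would spell them out. Everything else is a direct substitution. One could alternatively phrase the proof via Lemma \ref{csu}: $CUC = U$ is equivalent to saying $C$ commutes with $U$ in the antilinear sense, and then $U = J_1 J_2$ gives $C J_1 \cdot C J_2$-type relations; but the $W = CJ_1$ route above is the cleanest and I would present that one.
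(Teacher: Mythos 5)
Your proposal is correct and follows essentially the same route as the paper: the paper also invokes Proposition \ref{GL}, forms the unitary $V = J_1 C$ (you use $W = CJ_1$, the adjoint of the paper's $V$), and verifies the intertwining $VU = U^*V$ directly rather than via the (equally valid) identity $CU^*C = (CUC)^* = U^*$ that you use. The antilinear bookkeeping you flag is exactly the content of \eqref{CCCCCCcccc} and poses no problem.
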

\begin{proof}Let $J_1$ be as in Proposition \ref{GL}, $C \in \mathscr{C}_{c}(U)$,  and define $V=J_1C$. Clearly $V$ is unitary (since it is linear, isometric, and onto) and
$VU =J_1CU=J_1UC=U^{*}J_1C=U^*V$. Thus, $U \cong U^{*}$. 
\end{proof}

\section{Commuting conjugations of unitary matrices}

For an $n \times n$ unitary matrix $U$, the condition as to when $\mathscr{C}_{c}(U)$ is nonempty, along with the description of $\mathscr{C}_{c}(U)$, is straightforward and so we work it out in this separate section. We begin with the following result from  \cite[Lemma 3.2]{MR2966041}.

\begin{Proposition}\label{KHADFKDJSHF}
A mapping $C$ on $\C^n$  is a conjugation if and only if $C = V J$, where $V$ is an $n \times n$ unitary matrix with $V^{t} = V$ and $J$ is the conjugation on $\C^n$ defined by 
\begin{equation}\label{JjJJJJJJjjj}
J[x_1\; x_2\; \cdots\; x_n]^{t} = [\overline{x_1}\; \overline{x_2}\; \cdots\; \overline{x_n}]^{t},
\end{equation}
i.e., 
$C[x_1 \; x_2 \; \cdots \; x_n]^{t} = V [\overline{x_1} \; \overline{x_2} \; \cdots \; \overline{x_n}]^{t}.$
\end{Proposition}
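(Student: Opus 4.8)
The plan is to reduce both directions to the single matrix identity
\begin{equation}
J V J = \overline{V},
\end{equation}
valid for every linear map $V$ on $\C^n$, where $\overline{V}$ denotes the matrix obtained by conjugating the entries of $V$ relative to the standard basis. This is immediate: $J V J \vec{x} = J(V \overline{\vec{x}}) = \overline{V \overline{\vec{x}}} = \overline{V}\, \vec{x}$ for all $\vec{x} \in \C^n$. I would record it first, together with the standard relations $V^{*} = \overline{V}^{\,t}$ and, for unitary $V$, $V^{*} = V^{-1}$.

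For the forward implication, suppose $C$ is a conjugation on $\C^n$. Since $J$ is antilinear, isometric, and involutive, the composition $V := CJ$ is linear and isometric, hence a unitary matrix, and $C = V J$. To see that $V^{t} = V$, I would start from $C^{2} = I$, which reads $V J V J = I$; by the identity above this is $V \overline{V} = I$, so $\overline{V} = V^{-1} = V^{*} = \overline{V}^{\,t}$, and conjugating both sides gives $V = V^{t}$.

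For the converse, given a unitary $V$ with $V^{t} = V$, set $C = V J$. Then $C$ is antilinear (a linear map composed with an antilinear one) and isometric (because $\|V J \vec{x}\| = \|J \vec{x}\| = \|\vec{x}\|$), and $C^{2} = V J V J = V \overline{V}$ by the identity above; since $V^{t} = V$ we have $\overline{V} = \overline{V^{t}} = V^{*}$, so $V \overline{V} = V V^{*} = I$ and $C$ is involutive. Hence $C$ is a conjugation.

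I do not anticipate a genuine obstacle here: the argument is a short computation, and the only point calling for care is keeping straight the three operations—transpose, adjoint, and entrywise conjugation—on a unitary matrix, so as to pass correctly between $V \overline{V} = I$ and $V^{t} = V$.
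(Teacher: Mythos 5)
Your proof is correct: the identity $JVJ=\overline{V}$ handles both directions cleanly, writing $V=CJ$ gives a unitary with $C=VJ$, and the passage from $V\overline{V}=I$ to $V^{t}=V$ (and back) is carried out without error. The paper itself states this proposition without proof, citing it from an earlier reference, and your argument is precisely the standard one behind that citation, so there is nothing to reconcile.
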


We now establish when $\mathscr{C}_{c}(U) \not = \varnothing$ for an $n \times n$ unitary matrix $U$. This is a special case of Theorem  \ref{fdsgdfuuuuU}.

\begin{Proposition}\label{propositionCunonsmemepry}
For an $n \times n$ unitary matrix $U$ the following are equivalent. 
\begin{enumerate}
\item $\mathscr{C}_{c}(U) \not = \varnothing$; 
\item $U \cong U^{*}$. 
\end{enumerate}
\end{Proposition}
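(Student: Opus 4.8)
The implication (a) $\Rightarrow$ (b) is precisely Lemma \ref{uustar}, so the content of the proposition is the reverse implication (b) $\Rightarrow$ (a), and that is where I would concentrate. My plan is to put $U$ in a spectral normal form dictated by the hypothesis $U \cong U^{*}$, and then write down an explicit commuting conjugation block by block.

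\emph{Step 1 (normal form).} Since $U$ is unitary, hence normal, the finite-dimensional spectral theorem gives a unitary matrix $W$ and a diagonal matrix $D = W^{*} U W$ whose diagonal entries are the eigenvalues of $U$, each repeated according to multiplicity. For normal matrices, unitary equivalence is detected exactly by the multiplicity function on the spectrum; and the eigenvalues of $U^{*}$ are the numbers $\overline{\lambda}$, $\lambda \in \sigma(U)$, occurring with the same multiplicities (the $\lambda$-eigenspace of $U$ is the $\overline{\lambda}$-eigenspace of $U^{*}$). Hence $U \cong U^{*}$ is equivalent to saying that $\lambda$ and $\overline{\lambda}$ have equal multiplicity for every $\lambda \in \sigma(U)$. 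Pairing off the non-real eigenvalues with their conjugates and isolating the possible eigenvalues $1$ and $-1$, we may choose $W$ so that
\[
D \;=\; \Bigl(\bigoplus_{j=1}^{d} \begin{bmatrix} \xi_j I_{n_j} & 0 \\ 0 & \overline{\xi_j} I_{n_j}\end{bmatrix}\Bigr) \;\oplus\; \begin{bmatrix} I_{\ell} & 0 \\ 0 & -I_{k}\end{bmatrix},
\]
where $\xi_1,\dots,\xi_d$ are the distinct eigenvalues in $\T\setminus\{1,-1\}$ with multiplicities $n_j$, and the last summand (or either of its diagonal blocks) is simply deleted when $1$ or $-1$ fails to be an eigenvalue.

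\emph{Step 2 (reduction and blockwise construction).} By Proposition \ref{bnjiovbhucfgyu}, $\mathscr{C}_{c}(U) = W\,\mathscr{C}_{c}(D)\,W^{*}$, so it suffices to produce one conjugation $C_0$ on $\C^{n}$ with $C_0 D C_0 = D$. On each conjugate-pair summand $\C^{n_j}\oplus\C^{n_j}$ put $C_j = \begin{bmatrix} 0 & I_{n_j} \\ I_{n_j} & 0\end{bmatrix} J$, where $J$ denotes entrywise complex conjugation; the flip matrix is unitary and symmetric, so $C_j$ is a conjugation by Proposition \ref{KHADFKDJSHF}, and a one-line computation (using that $J$ conjugates the scalar $\xi_j$ and commutes with the real flip matrix) gives $C_j \begin{bmatrix} \xi_j I_{n_j} & 0 \\ 0 & \overline{\xi_j} I_{n_j}\end{bmatrix} = \begin{bmatrix} \xi_j I_{n_j} & 0 \\ 0 & \overline{\xi_j} I_{n_j}\end{bmatrix} C_j$, hence $C_j$ conjugates the $j$-th block to itself. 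On the $\pm 1$ summand take $C_\infty = J$ (entrywise conjugation on $\C^{\ell+k}$); since $\pm 1$ are real, $C_\infty$ commutes with that block. Then $C_0 = \bigl(\bigoplus_{j} C_j\bigr)\oplus C_\infty$ is an orthogonal direct sum of conjugations, hence a conjugation on $\C^{n}$, and $C_0 D C_0 = D$. Pulling back, $C = W C_0 W^{*} \in \mathscr{C}_{c}(U)$, which proves (a).

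The only step that requires an argument is Step 1 — converting $U \cong U^{*}$ into the conjugate-symmetry of the multiplicity function and thus into the displayed block-diagonal form; once the normal form is available, Steps 2 and 3 are routine verifications. (With more bookkeeping the same recipe, with the flip matrices replaced by $\begin{bmatrix} 0 & V_j \\ V_j^{t} & 0\end{bmatrix}$ for arbitrary unitary $V_j$ and $J$ on the $\pm 1$ block replaced by an arbitrary symmetric-unitary-times-$J$ conjugation, in fact sweeps out all of $\mathscr{C}_{c}(U)$, recovering the description quoted in the introduction; but for the proposition only nonemptiness is needed.)
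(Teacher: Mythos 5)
Your proposal is correct and takes essentially the same route as the paper's proof: diagonalize $U$ into the conjugate-paired normal form and exhibit the block conjugation built from flip matrices composed with entrywise conjugation, which is exactly the conjugation $C'$ of \eqref{00088ooLLLLL}, then transport it back by $W$ via Lemma \ref{lem1.2} and Proposition \ref{bnjiovbhucfgyu}. The only difference is cosmetic: you spell out why $U \cong U^{*}$ forces conjugate eigenvalues to have equal multiplicities, a point the paper subsumes under the spectral theorem.
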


\begin{proof}
The implication $(a) \Longrightarrow (b)$ is from Lemma \ref{uustar}. For the implication $(b) \Longrightarrow (a)$,  suppose that $U \cong U^{*}$. As mentioned in the introduction, the spectral theorem for unitary matrices implies that $U$ is unitarily equivalent to 
\begin{equation}\label{Uprimepwerteww}
U' = 
\begin{bmatrix}
\begin{bmatrix}
\xi_1 I_{n_1} & \\
 & \overline{\xi_1} I_{n_1}
\end{bmatrix} & & \\
& 
 \ddots & & \\
& & & \begin{bmatrix}
\xi_d I_{n_d} & \\
 & \overline{\xi_d} I_{n_d}
\end{bmatrix}\\
& & & &
\begin{bmatrix}
I_{\ell} & \\
 & - I_{k}
\end{bmatrix}
\end{bmatrix},
\end{equation}
where $\xi_1, \ldots, \xi_d \in \T \setminus \{1, -1\}$ are distinct eigenvalues of $U$, $I_{m}$ denotes the $m \times m$ identity matrix, and the block in the lower right corner might not appear or might appear as just $I_{\ell}$ or just $-I_{k}$, depending on whether $1$ or $-1$ are eigenvalues of $U$. Of course $n_j$, $\ell$, and $k$ represent the  multiplicities of the respective eigenvalues and $2 n_1 + \cdots + 2 n_d + \ell + k = n$.

Now consider the mapping 
\begin{equation}\label{00088ooLLLLL}
C' = \begin{bmatrix}
\begin{bmatrix}
&  I_{n_1} \\
I_{n_1}  &
\end{bmatrix} & & \\
& 
 \ddots & & \\
& & & \begin{bmatrix}
&  I_{n_d} \\
I_{n_d} &  
\end{bmatrix}\\
& & & &
\begin{bmatrix}
I_{\ell} & \\
 & I_{k}
\end{bmatrix}
\end{bmatrix}
J,
\end{equation}
where $J$ is the conjugation on $\C^n$ from \eqref{JjJJJJJJjjj}. Proposition \ref{KHADFKDJSHF} says that $C'$ is a conjugation on $\C^n$ and block multiplication will show that $C' U' C' = U'$. If $W$ is the unitary matrix such that $W U' W^{*} = U$,  Lemma \ref{lem1.2} says  that $C = W C' W^{*}$ is a conjugation on $\C^n$ and Proposition \ref{bnjiovbhucfgyu} shows that $C U C = U$. Thus, $\mathscr{C}_{c}(U) \not = \varnothing$.
\end{proof}

\begin{Theorem}\label{osidfgisdufbbbBB}
Suppose that $U$ is an $n \times n$ unitary matrix with $U  \cong U^{*}$ and $W$ is a unitary matrix such that $W U W^{*} = U'$, where $U'$ is the matrix from \eqref{Uprimepwerteww}. Then every $C \in \mathscr{C}_{c}(U)$ takes the form 
\begin{equation}\label{7377GGGGG777g}
C = W
\begin{bmatrix}
\begin{bmatrix}
&  V_{1} \\
 V_{1}^{t}  &
\end{bmatrix} & & \\
& 
 \ddots & & \\
& & & \begin{bmatrix}
&  V_{d} \\
V_{d}^{t} &  
\end{bmatrix}\\
& & & &
\begin{bmatrix}
Q_{\ell} & \\
 & Q_k
\end{bmatrix}
\end{bmatrix}
J W^{*},
\end{equation}
where each $V_j$ is an $n_j \times n_j$ unitary matrix, $Q_{\ell}$, $Q_{k}$ are $\ell \times \ell$ and $k \times k$ (respectively) unitary matrices with $Q_{\ell}^{t} = Q_{\ell}$ and $Q_{k}^{t} = Q_{k}$ (in which only one or perhaps both might not appear depending whether $1$ or $-1$ are eigenvalues of $U$), and $J$ is the conjugation on $\C^n$ from \eqref{JjJJJJJJjjj}.
\end{Theorem}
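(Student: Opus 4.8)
The plan is to reduce the problem, via Proposition~\ref{bnjiovbhucfgyu} and Lemma~\ref{lem1.2}, to the single unitary matrix $U'$ of \eqref{Uprimepwerteww}: it suffices to show that every $C' \in \mathscr{C}_{c}(U')$ has the block form displayed in \eqref{7377GGGGG777g} (without the flanking $W$'s), since then $C = W C' W^{*}$ with $C' \in \mathscr{C}_{c}(U')$. So fix a conjugation $C'$ on $\C^{n}$ with $C' U' C' = U'$. First I would use Proposition~\ref{KHADFKDJSHF} to write $C' = S J$ for a symmetric unitary $S$ (i.e.\ $S^{t} = S$), where $J$ is the entrywise conjugation from \eqref{JjJJJJJJjjj}. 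The task is then to determine which symmetric unitaries $S$ satisfy $SJ U' S J = U'$, and to recognize that these are precisely the block matrices appearing in \eqref{7377GGGGG777g}.

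The key computational step is to translate the relation $C' U' C' = U'$ into a relation purely about the linear map $S$. Since $J$ is entrywise conjugation and $U'$ is a \emph{real-diagonal-in-blocks} matrix in the sense that its diagonal entries are $\xi_j, \overline{\xi_j}, 1, -1$, one has $J U' J = \overline{U'}$, where $\overline{U'}$ is the entrywise conjugate; but $\overline{U'}$ is obtained from $U'$ by swapping $\xi_j I_{n_j} \leftrightarrow \overline{\xi_j} I_{n_j}$ in each $2\times 2$ block and leaving the $I_\ell, -I_k$ block alone. In other words $J U' J = P U' P$, where $P$ is the permutation (actually an involution) that swaps the two halves of each $\xi_j$-block and is the identity on the last block. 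Plugging $C' = SJ$ into $C' U' C' = U'$ and using $C'{}^{2} = I$ (so $S \overline{S} = I$, equivalently $\overline{S} = S^{-1} = S^{*}$, which together with $S^{t}=S$ is consistent) gives, after a short manipulation, the commutation relation $S (P U' P) = U' S$, i.e.\ $S$ intertwines $P U' P$ with $U'$. Writing this out blockwise: $S$ must send the $\overline{\xi_j}$-eigenspace of the ``twisted'' matrix to the $\xi_j$-eigenspace of $U'$, etc. Because the $\xi_j$ are distinct and lie in $\T\setminus\{1,-1\}$ (so $\xi_j \neq \overline{\xi_j}$, and the eigenvalues $\xi_j, \overline{\xi_j}, 1, -1$ are all distinct across the whole matrix), this forces $S$ to be block-diagonal with respect to the coarse decomposition $\C^{n} = \bigoplus_j (\C^{n_j}\oplus\C^{n_j}) \oplus (\C^\ell \oplus \C^k)$, and within the $j$-th block $S$ must be \emph{anti}-diagonal, $S_j = \begin{bmatrix} 0 & V_j \\ V_j' & 0\end{bmatrix}$, while on the last block $S$ is block-diagonal $\begin{bmatrix} Q_\ell & 0 \\ 0 & Q_k\end{bmatrix}$ (the $1$- and $(-1)$-eigenspaces cannot mix).

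It remains to impose the two constraints that $S$ be unitary and symmetric. Unitarity of $S_j$ forces $V_j$ unitary and $V_j' = (V_j^{*})^{?}$—more precisely, symmetry $S_j^{t} = S_j$ forces $V_j' = V_j^{t}$, so $S_j = \begin{bmatrix} 0 & V_j \\ V_j^{t} & 0\end{bmatrix}$ with $V_j$ unitary (one checks unitarity is then automatic), and on the last block symmetry forces $Q_\ell^{t} = Q_\ell$, $Q_k^{t} = Q_k$, both unitary—exactly the description claimed. Conversely, by Proposition~\ref{KHADFKDJSHF} every such $S$ gives a conjugation, and a direct block multiplication (as in the proof of Proposition~\ref{propositionCunonsmemepry}) confirms $SJ U' SJ = U'$, so the family is exactly right; folding back the conjugation by $W$ gives \eqref{7377GGGGG777g}. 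I expect the main obstacle to be the bookkeeping in the blockwise analysis of the intertwining relation $S(PU'P) = U'S$—in particular being careful that the last block ($1$ and $-1$ eigenspaces) genuinely cannot interact with the $\xi_j$-blocks nor with each other, which is where one uses that $1,-1,\xi_1,\overline{\xi_1},\dots,\xi_d,\overline{\xi_d}$ are pairwise distinct—and in cleanly extracting the constraint $V_j' = V_j^{t}$ from $S^{t} = S$ rather than from unitarity alone.
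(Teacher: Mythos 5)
Your proposal is correct and follows essentially the same route as the paper: reduce to $U'$ by conjugating with $W$, write $C' = VJ$ via Proposition~\ref{KHADFKDJSHF}, derive the intertwining relation $U'V = V\overline{U'}$, and use the distinctness of the eigenvalues together with $V^{t}=V$ to force the block anti-diagonal/diagonal form, checking the converse by block multiplication. The only cosmetic difference is your phrasing of $\overline{U'}$ as $PU'P$ via a swap permutation, which does not change the argument.
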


\begin{proof}
By Lemma \ref{lem1.2} and Proposition \ref{JjJJJJJJjjj}, the mapping $C$ from \eqref{7377GGGGG777g} defines a conjugation on $\C^n$. As argued in the proof of Proposition \ref{propositionCunonsmemepry} (replacing the conjugation $C$ from \eqref{7377GGGGG777g} with the conjugation $C'$ from \eqref{00088ooLLLLL}), one can see that each conjugation $C$ from \eqref{7377GGGGG777g} belongs to $\mathscr{C}_{c}(U)$. 

Now suppose that $C \in \mathscr{C}_{c}(U)$. Then $C' := W^{*} C W \in \mathscr{C}_{c}(U')$, where $U'$ is the matrix from \eqref{Uprimepwerteww}. From Proposition \ref{KHADFKDJSHF}, $C' = V J$, where $V$ is an $n \times n$ unitary matrix with $V^{t} = V$. Now observe that $J U' J = \overline{U'}$ (the matrix $U'$ with all the entries conjugated) and $J V J = \overline{V} = \overline{V^t} = V^{*}$ and thus 
$$U' = C' U' C' = (V J) U' (V J )= V (J U' J) (J V J) = V \overline{U'} V^{*}.$$ This yields the identity $U' V = V \overline{U'}$. A computation with block multiplication of matrices and the fact that $V = V^{t}$ (along with the facts that $U'$ is block diagonal with distinct multiples of the identity matrix along its diagonal) will show that 
$$V = \begin{bmatrix}
\begin{bmatrix}
&  V_{1} \\
 V_{1}^{t}  &
\end{bmatrix} & & \\
& 
 \ddots & & \\
& & & \begin{bmatrix}
&  V_{d} \\
V_{d}^{t} &  
\end{bmatrix}\\
& & & &
\begin{bmatrix}
Q_{\ell} & \\
 & Q_k
\end{bmatrix}
\end{bmatrix},$$
where $V_{j}$ are $n_j \times n_j$ unitary matrices and $Q_{\ell}$ and $Q_{k}$ are $\ell \times \ell$ and $k \times k$ unitary matrices with $Q_{\ell}^{t} = Q_{\ell}$, $Q_{k}^{t} = Q_{k}$.
Thus, $C = W C' W^{*}$ has the desired form in \eqref{7377GGGGG777g}.
\end{proof}

\section{Conjugations and spectral measures}

A version of the spectral theorem for unitary operators  \cite[Ch.~IX, Thm.~2.2]{ConwayFA}  (see also \cite{MR0045309}) says that if $U$ is a unitary operator on $\mathcal{H}$, then there is a unique spectral measure $E(\cdot)$ on $\T$ such that
\begin{equation}\label{9sectraloth}
U = \int \xi d E(\xi).
\end{equation}
Moreover, for any spectral measure $E(\cdot)$ on $\T$ (i.e., $E(\cdot)$ is a projection-valued function on the Borel subsets of $\T$ such that $E(\T) = I$ and $E(\cdot)$ is countably additive), there is a unique unitary operator $U$ associated  with $E(\cdot)$ via \eqref{9sectraloth}.

 For a spectral measure $E(\cdot)$ and $\vec{x}, \vec{y} \in \mathcal{H}$, the function
$$\mu_{\vec{x}, \vec{y}}(\cdot) := \langle E(\cdot) \vec{x}, \vec{y}\rangle$$
 defines a finite complex Borel measure on $\T$ and, in particular, for each  $\vec{x} \in \h$,
\begin{equation}\label{67uyh87877YUYUY}
\mu_{\vec{x}} :=\mu_{\vec{x},\vec{x}}
\end{equation} defines a  finite positive Borel  measure on $\T$, sometimes called an {\it elementary measure}.

For a complex-valued Borel measure $\mu$ on $\T$, define a new complex Borel measure $\mu^{c}$ on the Borel subsets $\Omega$ of  $\T$ by
 \begin{equation}\label{muc}
 \mu^{c}(\Omega) := \mu(\Omega^{*}), \; \; \mbox{where} \; \; 
 \Omega^{*} : = \{\overline{\xi}: \xi \in \Omega\}
 \end{equation}
One can argue that $(\mu^{c})^{c} = \mu$.  For a spectral measure $E(\cdot)$ on $\T$, we have the  family of measures
 $\{\mu_{\vec{x}, \vec{y}}^{c}: \vec{x}, \vec{y} \in \mathcal{H}\}$ defined via \eqref{muc}.

  For the rest of this paper, we use $M_{+}(\T)$ to denote the set of all finite positive Borel measures on $\T$.

 \begin{Proposition}\label{p3.1a}
 Suppose $\mu \in M_{+}(\T)$ and  $\mu^c \ll \mu$. Then the following hold.
 \begin{enumerate}
 \item  $\mu \ll \mu^{c}$;
 \item The Radon--Nikodym derivatives satisfy 
\begin{equation}\label{rnd}
  \frac{d\mu^c}{d\mu}(\xi) \cdot \frac{d\mu^c}{d\mu}(\bar \xi)=1 \; \; \mbox{for $\mu$-almost every $\xi \in \T$.}
\end{equation}
\end{enumerate}
\end{Proposition}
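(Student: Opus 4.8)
The plan is to exploit the symmetry $(\mu^{c})^{c} = \mu$ together with the chain rule for Radon--Nikodym derivatives. For part (a), I would first observe that the map $\Omega \mapsto \Omega^{*}$ on Borel subsets of $\T$ is an involutive bijection, so if $N$ is a Borel set with $\mu(N) = 0$, then $\mu^{c}(N^{*}) = \mu(N) = 0$; applying this reasoning in the reverse direction, I need to show that $\mu(\Omega) = 0$ whenever $\mu^{c}(\Omega) = 0$. Assuming $\mu^{c} \ll \mu$, suppose toward finding the right statement that $\mu^{c}(\Omega) = 0$. Then $\mu(\Omega^{*}) = 0$ by definition of $\mu^{c}$, so $\mu^{c}(\Omega^{*}) = \mu((\Omega^{*})^{*}) = \mu(\Omega)$; but we need to go from $\mu(\Omega^{*}) = 0$ to $\mu^{c}(\Omega^{*}) = 0$, which is exactly where $\mu^{c} \ll \mu$ applies. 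Hence $\mu(\Omega) = \mu^{c}(\Omega^{*}) = 0$, giving $\mu \ll \mu^{c}$.

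Alternatively, and perhaps more cleanly, I would note that the ``$c$'' operation is order-preserving and involutive on measures: the hypothesis $\mu^{c} \ll \mu$ is equivalent (by pushing forward through the involution $\xi \mapsto \bar\xi$) to $\mu \ll \mu^{c}$, because $\nu \ll \lambda$ if and only if $\nu^{c} \ll \lambda^{c}$ for any pair of positive measures, and $(\mu^{c})^{c} = \mu$. This symmetry argument is the conceptual heart of part (a) and avoids chasing individual null sets.

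For part (b), with both absolute continuities in hand, let $w := d\mu^{c}/d\mu$, a nonnegative function in $L^{1}(\mu)$ (in fact the argument will show $w > 0$ $\mu$-a.e.). The key identity I would establish is that for any nonnegative Borel function $g$ on $\T$, one has $\int g(\xi)\, d\mu^{c}(\xi) = \int g(\bar\xi)\, d\mu(\xi)$; this is just the change-of-variables formula for the pushforward of $\mu$ under $\xi \mapsto \bar\xi$, which is precisely $\mu^{c}$. Applying this twice: on one hand $\int g\, d\mu^{c} = \int g\, w\, d\mu$ by definition of $w$; on the other hand, writing $h(\xi) := g(\bar\xi)$ and using the change of variables, $\int g(\xi)\, d\mu^{c}(\xi) = \int g(\bar\xi)\, d\mu(\xi) = \int h(\xi)\, d\mu(\xi)$. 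To extract the pointwise relation \eqref{rnd} I would instead compute $d\mu^{c}/d\mu$ via the chain rule applied to the composition of the two ``$c$'' operations: since $\mu = (\mu^{c})^{c}$, writing $w^{c}(\xi)$ for the function $\xi \mapsto w(\bar\xi)$, the change-of-variables formula gives $d(\mu^{c})^{c}/d\mu^{c}(\xi) = w(\bar\xi)$ (the Radon--Nikodym derivative transforms by $\bar\xi$-substitution), and then the chain rule $\frac{d\mu}{d\mu} = \frac{d(\mu^{c})^{c}}{d\mu^{c}} \cdot \frac{d\mu^{c}}{d\mu}$ yields $1 = w(\bar\xi)\, w(\xi)$ for $\mu$-a.e.\ $\xi$, which is exactly \eqref{rnd}.

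The main obstacle, and the step requiring the most care, is justifying that the Radon--Nikodym derivative of a pushforward measure transforms by substitution: concretely, that if $\nu \ll \lambda$ with density $d\nu/d\lambda = \phi$, then $d\nu^{c}/d\lambda^{c} = \phi \circ (\xi \mapsto \bar\xi)$, i.e., $\phi^{c}$. This follows from the change-of-variables formula $\int f\, d\nu^{c} = \int f^{c}\, d\nu = \int f^{c}\, \phi\, d\lambda = \int f\, \phi^{c}\, d\lambda^{c}$ valid for all nonnegative Borel $f$ (using $(f^{c} \phi)^{c} = f \phi^{c}$ and that $c$ is an involution on functions too), together with uniqueness of the Radon--Nikodym derivative. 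Once this lemma is in place, part (b) is a one-line chain-rule computation, and part (a) is the symmetry remark above; everything else is routine measure theory with the involution $\xi \mapsto \bar\xi$.
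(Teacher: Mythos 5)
Your proposal is correct and follows essentially the same route as the paper: part (a) via the fact that the involution $\mu \mapsto \mu^{c}$ preserves absolute continuity together with $(\mu^{c})^{c}=\mu$, and part (b) via the substitution rule $d\nu^{c}/d\lambda^{c} = \phi^{c}$ combined with the chain rule for Radon--Nikodym derivatives. The paper's proof is just a compressed version of this, written as the one-line differential computation $d\mu(\xi)=d\mu^{c}(\bar\xi)=h(\bar\xi)\,d\mu(\bar\xi)=h(\bar\xi)\,d\mu^{c}(\xi)=h(\bar\xi)h(\xi)\,d\mu(\xi)$, whose justification is exactly the change-of-variables lemma you spell out.
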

\begin{proof}
Let $h = d \mu^{c}/d \mu$.
Observe that $\mu=(\mu^c)^c\ll\mu^c$ and
                   \[ d \mu(\xi)=  d \mu^c(\bar\xi)  = h(\bar \xi)d \mu(\bar\xi) = h(\bar \xi)d \mu^{c}(\xi) = h(\bar \xi) h(\xi)d \mu(\xi)\]
for $\mu$-almost every $\xi \in \T$.
                   Therefore,
 \begin{equation} \label{e0r98utiofpwkdsdgf}
 d \mu(\xi)  = h(\bar \xi)d \mu^{c}(\xi) \; \; \mbox{and} \; \;  h(\bar \xi) h(\xi)=1 
 \end{equation}
  for $\mu$-almost every $\xi \in \T$.
   \end{proof}

The following proposition, originally explored in \cite{MR0190750} for symmetric conjugations, relates  a $C \in \mathscr{C}_{c}(U)$ with the associated spectral measure $E(\cdot)$ for $U$. Define $E^{c}(\cdot)$ on Borel subsets $\Omega$ of $\T$ by 
$$E^{c}(\Omega) := E(\Omega^{*}).$$ From this definition it follows that 
$$\langle E^{c}(\Omega) \vec{x}, \vec{y}\rangle=\mu_{\vec{x}, \vec{y}}^{c}(\Omega) \; \mbox{for all $\vec{x}, \vec{y} \in \h$.}$$

\begin{Proposition}\label{4}
Let $C$ be a conjugation on $\h$ and $U$ be a unitary operator on $\h$ with associated spectral measure $E(\cdot)$. 
Then we have the following. 
\begin{enumerate}
\item 
 $E^c(\cdot)$ is the associated spectral measure for $U^*$.
\item  $CE(\cdot)C$ is the spectral measure for $CU^*C$.
\item $CUC = U^{*}$ if and only if $C E(\Omega) C = E(\Omega)$ for all Borel subsets $\Omega$ of $\T$.
\item $CUC = U$  if and only if $C E(\Omega)C = E^{c}(\Omega)$ for all Borel subsets $\Omega$ of $\T$.
\end{enumerate}
\end{Proposition}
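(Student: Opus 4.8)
The plan is to prove all four parts by exploiting the uniqueness clause in the spectral theorem: any projection-valued countably additive map on the Borel sets of $\T$ with total mass $I$ is the spectral measure of exactly one unitary operator, namely $\int \xi\, dF(\xi)$. So to identify a spectral measure it suffices to check that it has the right form and then compute the corresponding integral.

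For part (a), I would first verify that $E^{c}(\cdot)$ is a spectral measure. The map $\Omega \mapsto \Omega^{*}$ is a Borel bijection of $\T$ with $\T^{*}=\T$, so $E^{c}(\T)=E(\T)=I$, each $E^{c}(\Omega)=E(\Omega^{*})$ is a projection, and countable additivity transfers since $\Omega\mapsto\Omega^{*}$ preserves disjoint unions. Then I compute $\int \xi\, dE^{c}(\xi)$; changing variables $\xi\mapsto\overline{\xi}$ (concretely, approximating by simple functions $\sum c_j \mathbf{1}_{\Omega_j}$ and using $E^{c}(\Omega_j)=E(\Omega_j^{*})$) gives $\int \overline{\xi}\, dE(\xi) = U^{*}$. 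Hence $E^{c}$ is the spectral measure of $U^{*}$.

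For part (b), $C E(\cdot) C$ is projection-valued because for a projection $P$ one checks $(CPC)^{2}=CP^{2}C=CPC$ and $CPC$ is self-adjoint using the antilinear adjoint relation $\langle CPC\vec{x},\vec{y}\rangle=\langle C\vec{y},PC\vec{x}\rangle=\langle P C\vec{y}, C\vec{x}\rangle=\langle \vec{x}, CPC\vec{y}\rangle$ via \eqref{CCCCCCcccc} (equivalently invoke Lemma \ref{lem1.2} with the real-linear isometry heuristic, but the direct check is cleanest). It is countably additive because $C$ is isometric and involutive so it carries strong limits of partial sums to strong limits. Then, using antilinearity of $C$ to pull scalars out as conjugates, $\int \xi\, d(CE(\xi)C) = C\big(\int \overline{\xi}\, dE(\xi)\big)C = C U^{*} C$, so $C E(\cdot) C$ is the spectral measure of $C U^{*} C$; this is the statement of (b).

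Parts (c) and (d) are then immediate corollaries. For (c): $CUC=U^{*}$ holds iff $CU^{*}C=U$ (conjugate both sides by $C$ and use $C^{2}=I$), and by (b) the left side has spectral measure $C E(\cdot) C$ while the right side has spectral measure $E(\cdot)$; by uniqueness these operators are equal iff their spectral measures coincide, i.e.\ $C E(\Omega) C = E(\Omega)$ for all Borel $\Omega$. For (d): $CUC=U$ iff $CU^{*}C=U^{*}$, and by (b) the left side has spectral measure $CE(\cdot)C$ while by (a) $U^{*}$ has spectral measure $E^{c}(\cdot)$, so equality holds iff $CE(\Omega)C=E^{c}(\Omega)$ for all $\Omega$. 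I do not anticipate a serious obstacle here; the only point requiring a little care is the interchange of the antilinear map $C$ (and of the substitution $\xi\mapsto\overline\xi$) with the spectral integral, which I would justify by reducing to simple functions and passing to the strong-operator limit, noting that $C$ is continuous. Everything else is bookkeeping with the uniqueness part of the spectral theorem and the algebraic identities $C^{2}=I$, antilinearity, and \eqref{CCCCCCcccc}.
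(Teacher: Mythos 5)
Your argument is correct and follows essentially the same route as the paper: verify that $E^{c}(\cdot)$ and $CE(\cdot)C$ are spectral measures, identify them via the uniqueness clause of the spectral theorem as the spectral measures of $U^{*}$ and $CU^{*}C$, and then deduce (c) and (d) from the equivalences $CUC=U^{*}\Leftrightarrow CU^{*}C=U$ and $CUC=U\Leftrightarrow CU^{*}C=U^{*}$. The only cosmetic difference is that you compute with operator-valued integrals and simple-function approximation, while the paper performs the same computation weakly through the scalar measures $\langle E(\cdot)\vec{x},\vec{y}\rangle$ using \eqref{CCCCCCcccc}.
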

\begin{proof}
If $E(\cdot)$ is a spectral measure, one can check that $E^c(\cdot)$ and $CE(\cdot)C$ are also spectral measures. Since, for each pair $\vec{x}, \vec{y} \in \h$,
$$\langle U^* \vec{x},\vec{y}\rangle=\int\bar\xi\,d\langle E(\xi)\vec{x},\vec{y}\rangle =\int \xi \,d\langle E^c(\xi)\vec{x},\vec{y}\rangle,$$
the uniqueness of the spectral measure for a unitary operator gives (a).
In a similar way,  (b) is a consequence of the computation 
\begin{align*}
\langle CU^*C \vec{x},\vec{y}\rangle &=\langle C \vec{y},U^*C \vec{x}\rangle\\
&=\langle UC\vec{y},C\vec{x}\rangle\\
&= \int\xi\,d\langle E(\xi)C\vec{y},C\vec{x}\rangle\\
&=\int\xi\,d\langle \vec{x},CE(\xi)C\vec{y}\rangle\\
&=\int\xi\,d\langle CE(\xi)C\vec{x},\vec{y}\rangle.
 \end{align*}
 Note the use of \eqref{CCCCCCcccc} in the above calculation. To see (c), note that $CU^*C = U$  if and only if their spectral measures $CE(\cdot)C$ and $E(\cdot)$ coincide. Symmetrically in (d), $CU^*C$ equals to $U^*$  if and only if the spectral measures $CE(\cdot)C$ and $E^c(\cdot)$ coincide.
\end{proof}

As we will see in subsequent sections, the set $\mathscr{C}_c(U)$ is quite large and so an important step in understanding it is to decompose each $C \in \mathscr{C}_{c}(U)$  into more manageable pieces. This decomposition will involve various types of invariant subspaces. Recall that a (closed) subspace $\mathcal{M}$ of a Hilbert space $\mathcal{H}$ is {\em invariant} for an $A \in \mathcal{B}(\mathcal{H})$ if $A \mathcal{M} \subset \mathcal{M}$; {\em reducing} if both $A \mathcal{M} \subset \mathcal{M}$ and $A^{*} \mathcal{M} \subset \mathcal{M}$; and {\em hyperinvariant} if $T \mathcal{M} \subset \mathcal{M}$ for every $T \in \mathcal{B}(\mathcal{H})$ that commutes with $A$. We begin with a simple lemma whose proof follows from \eqref{CCCCCCcccc} and the fact that $C^2 = I$. 

\begin{Lemma}\label{nnNDF99}
If $C$ is a conjugation on $\h$ and $\mathcal{M}$ is a subspace of $\h$ such that $C \mathcal{M} \subset \mathcal{M}$, then $C \mathcal{M} = \mathcal{M}$ and $C\mathcal{M}^{\perp}=\mathcal{M}^\perp$.
\end{Lemma}

\begin{Proposition}\label{98auifdovjlscfergeAA}
Let $U\in\mathcal{B}(\h)$ be a unitary operator with associated spectral measure $E(\cdot)$ and $\Omega\subset \T$ be a Borel set.
  \begin{enumerate}
  \item
 If ${\Omega}^*=\Omega$  then for any $C \in \mathscr{C}_{c}(U)$,  we have 
 $C(E(\Omega)\h) =  E(\Omega)\h$.
\item If $\mathscr{C}_{c}(U) \not = \varnothing$ and $E(\Omega)\h$ is invariant for $C$, then $E(\Omega\setminus \Omega^*)=0$.
\end{enumerate}
 \end{Proposition}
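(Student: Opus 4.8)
The plan is to prove the two parts of Proposition~\ref{98auifdovjlscfergeAA} directly from the spectral-measure characterization in Proposition~\ref{4}(d), together with the elementary facts about conjugations and subspaces in Lemma~\ref{nnNDF99} and Lemma~\ref{lem1.2}.

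For part~(a), fix $C \in \mathscr{C}_{c}(U)$. By Proposition~\ref{4}(d) we have $C E(\Omega) C = E^{c}(\Omega) = E(\Omega^{*})$ for every Borel set $\Omega \subset \T$. If $\Omega^{*} = \Omega$, this says $C E(\Omega) C = E(\Omega)$, i.e. $C$ commutes with the orthogonal projection $E(\Omega)$. I would then check directly that this forces $C$ to map the range $E(\Omega)\h$ into itself: for $\vec{x} \in E(\Omega)\h$ we have $E(\Omega)\vec{x} = \vec{x}$, hence $E(\Omega) C \vec{x} = C E(\Omega) \vec{x} = C \vec{x}$, so $C\vec{x} \in E(\Omega)\h$. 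Thus $C(E(\Omega)\h) \subset E(\Omega)\h$, and Lemma~\ref{nnNDF99} upgrades this to equality $C(E(\Omega)\h) = E(\Omega)\h$. This part is essentially immediate once one invokes (d).

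For part~(b), assume $\mathscr{C}_{c}(U) \neq \varnothing$, pick $C \in \mathscr{C}_{c}(U)$, and suppose $\mathcal{M} := E(\Omega)\h$ is invariant for $C$, so by Lemma~\ref{nnNDF99} $C\mathcal{M} = \mathcal{M}$; equivalently (as in part~(a)) $C$ commutes with $E(\Omega)$, i.e. $C E(\Omega) C = E(\Omega)$. On the other hand Proposition~\ref{4}(d) gives $C E(\Omega) C = E(\Omega^{*})$. Comparing, we get $E(\Omega) = E(\Omega^{*})$ as projections. The goal is then to deduce $E(\Omega \setminus \Omega^{*}) = 0$. I would argue using the countable additivity / Boolean-algebra properties of the spectral measure: write $\Omega = (\Omega \cap \Omega^{*}) \cup (\Omega \setminus \Omega^{*})$ and $\Omega^{*} = (\Omega \cap \Omega^{*}) \cup (\Omega^{*} \setminus \Omega)$, both disjoint unions, so $E(\Omega) = E(\Omega \cap \Omega^{*}) + E(\Omega \setminus \Omega^{*})$ and $E(\Omega^{*}) = E(\Omega \cap \Omega^{*}) + E(\Omega^{*} \setminus \Omega)$. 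Since $E(\Omega) = E(\Omega^{*})$, subtracting yields $E(\Omega \setminus \Omega^{*}) = E(\Omega^{*} \setminus \Omega)$. But $\Omega \setminus \Omega^{*}$ and $\Omega^{*} \setminus \Omega$ are \emph{disjoint} Borel sets, so $E(\Omega\setminus\Omega^{*}) E(\Omega^{*}\setminus\Omega) = E\big((\Omega\setminus\Omega^{*}) \cap (\Omega^{*}\setminus\Omega)\big) = E(\varnothing) = 0$; combined with $E(\Omega\setminus\Omega^{*}) = E(\Omega^{*}\setminus\Omega)$ this gives $E(\Omega\setminus\Omega^{*})^{2} = 0$, and since $E(\Omega\setminus\Omega^{*})$ is a self-adjoint projection this means $E(\Omega\setminus\Omega^{*}) = 0$.

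The main (mild) obstacle is simply being careful with the multiplicativity and additivity axioms of the spectral measure in the second part — making sure one uses $E(A)E(B) = E(A\cap B)$ and finite additivity on disjoint sets correctly to pass from the operator identity $E(\Omega) = E(\Omega^{*})$ to the vanishing of $E$ on the non-symmetric part $\Omega\setminus\Omega^{*}$. Everything else reduces to the already-established Proposition~\ref{4}(d) and the routine observation that a conjugation commuting with a projection preserves its range (Lemma~\ref{nnNDF99}). I do not anticipate needing any measure-decomposition beyond elementary set algebra; in particular no appeal to Proposition~\ref{p3.1a} is required here.
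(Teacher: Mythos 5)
Your proposal is correct and follows essentially the same route as the paper: both parts rest on Proposition \ref{4}(d) combined with Lemma \ref{nnNDF99}. The only difference is cosmetic and occurs in (b), where the paper argues vectorwise---for $\vec{x}\in E(\Omega\setminus\Omega^*)\h$ it uses the isometry of $C$ and the invariance of $E(\Omega)\h$ to get $0=\|E(\Omega^*)\vec{x}\|=\|CE(\Omega)C\vec{x}\|=\|E(\Omega)C\vec{x}\|=\|C\vec{x}\|=\|\vec{x}\|$---whereas you upgrade the invariance (via Lemma \ref{nnNDF99}) to the operator identity $CE(\Omega)C=E(\Omega)$, deduce $E(\Omega)=E(\Omega^*)$, and finish with the additivity and multiplicativity of the spectral measure; both arguments are valid and of comparable length.
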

\begin{proof}
For the proof of (a),  let $\vec{x}\in E(\Omega)\h$ and $\vec{y}\in (E(\Omega)\h)^\perp$. By Proposition \ref{4}(d) we have
  \[  \langle C\vec{x}, \vec{y} \rangle= \langle CE(\Omega)\vec{x}, \vec{y} \rangle= \langle E(\Omega^{*})C\vec{x}, \vec{y} \rangle=  \langle C\vec{x}, E(\Omega)\vec{y} \rangle = \langle C \vec{x}, \vec{0}\rangle =0\]
and thus $C\vec{x}\in E(\Omega)\h$. Now apply Lemma \ref{nnNDF99}.

 For the proof of (b) let $\vec{x}\in E(\Omega\setminus \Omega^*)\h$. From $E(\Omega)= E(\Omega^{*}) \oplus E(\Omega \setminus \Omega^{*})$, we can use Proposition \ref{4}(d) to see that 
\begin{align*}
0 &=\|E(\Omega^*)\vec{x}\|=\|CE(\Omega)C \vec{x}\|=\|E(\Omega)C\vec{x}\|=\|C\vec{x}\|=\|\vec{x}\|. \qedhere
\end{align*}
\end{proof}

For a unitary operator $U$ on $\h$ with associated spectral measure $E(\cdot)$ and the associated family of elementary measures $\mu_{\vec{x}}, \vec{x} \in \h$ from \eqref{67uyh87877YUYUY},  one can show, as was done in \cite{MPRCOUI}, that for any $\mu \in M_{+}(\T)$ the set 
\[ \h_\mu :=\{\vec{x} \in \h: \mu_{\vec{x}} \ll \mu \}
\]
is a reducing subspace of $U$. The space $\h_{\mu}$ was discussed in \cite[\S 65]{MR0045309} as part of a general discussion of the spectral multiplicity theory for unitary operators. 

\begin{Theorem}\label{conj_dec}Let $U$ be a unitary operator on $\h$, $E(\cdot)$ its associated spectral measure, $\mu \in M_{+}(\T)$, and $C \in \mathscr{C}_{c}(U)$. Then we have the following. 
\begin{enumerate}
\item $C\h_\mu= \h_{\mu^c}$ and $C\h_\mu^{\perp}= \h_{\mu^c}^\perp$, and thus
\item
$C = C_{\mu, \mu^{c}} \oplus C^{'}_{\mu, \mu^{c}}$, where $C_{\mu, \mu^{c}}=C|_{{\h_\mu}}: \h_{\mu} \to \h_{\mu^{c}}$ and $C^{'}_{\mu, \mu^{c}}=C|_{{\h_\mu^\perp}}: \h_{\mu}^{\perp} \to \h_{\mu^c}^{\perp}$ are antilinear, onto, isometries.
\end{enumerate}
\end{Theorem}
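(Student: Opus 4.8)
The plan is to prove part (a) first and then deduce part (b) as an easy corollary. For part (a), the key identity is Proposition \ref{4}(d): since $C \in \mathscr{C}_{c}(U)$, we have $C E(\Omega) C = E^{c}(\Omega) = E(\Omega^{*})$ for every Borel set $\Omega \subset \T$. I would take an arbitrary $\vec{x} \in \h_{\mu}$, i.e., a vector with $\mu_{\vec{x}} \ll \mu$, and show that $C\vec{x} \in \h_{\mu^{c}}$, that is, $\mu_{C\vec{x}} \ll \mu^{c}$. The natural computation is to relate $\mu_{C\vec{x}}$ to $\mu_{\vec{x}}$ via the $C E(\Omega) C = E(\Omega^{*})$ identity together with the polarization-type formula \eqref{CCCCCCcccc}: for a Borel set $\Omega$,
\begin{align*}
\mu_{C\vec{x}}(\Omega) &= \langle E(\Omega) C\vec{x}, C\vec{x}\rangle = \langle C E(\Omega) C \vec{x}, \vec{x}\rangle = \langle E(\Omega^{*}) \vec{x}, \vec{x}\rangle = \mu_{\vec{x}}(\Omega^{*}) = \mu_{\vec{x}}^{c}(\Omega).
\end{align*}
So in fact $\mu_{C\vec{x}} = \mu_{\vec{x}}^{c}$ exactly. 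Then $\mu_{\vec{x}} \ll \mu$ immediately gives $\mu_{\vec{x}}^{c} \ll \mu^{c}$ (taking complex conjugates of null sets preserves the absolute continuity relation, since $\Omega^{*}$ is $\mu^{c}$-null iff $\Omega$ is $\mu$-null), hence $\mu_{C\vec{x}} \ll \mu^{c}$ and $C\vec{x} \in \h_{\mu^{c}}$. This shows $C\h_{\mu} \subset \h_{\mu^{c}}$.

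To upgrade the inclusion to an equality $C\h_{\mu} = \h_{\mu^{c}}$, I would apply the same argument with $\mu$ replaced by $\mu^{c}$, using $(\mu^{c})^{c} = \mu$: this gives $C\h_{\mu^{c}} \subset \h_{(\mu^{c})^{c}} = \h_{\mu}$. Applying $C$ to both sides and using $C^{2} = I$ yields $\h_{\mu^{c}} \subset C\h_{\mu}$, and combined with the first inclusion we get $C\h_{\mu} = \h_{\mu^{c}}$. For the orthogonal complements, since $\h_{\mu}$ and $\h_{\mu^{c}}$ are (closed) reducing subspaces and $C$ is a conjugation carrying $\h_{\mu}$ onto $\h_{\mu^{c}}$, the identity \eqref{CCCCCCcccc} shows $C$ carries $\h_{\mu}^{\perp}$ onto $\h_{\mu^{c}}^{\perp}$: if $\vec{y} \perp \h_{\mu}$ then for every $\vec{z} \in \h_{\mu^{c}}$ we have $\vec{z} = C\vec{w}$ for some $\vec{w} \in \h_{\mu}$, so $\langle C\vec{y}, \vec{z}\rangle = \langle C\vec{y}, C\vec{w}\rangle = \langle \vec{w}, \vec{y}\rangle = 0$; this gives $C\h_{\mu}^{\perp} \subset \h_{\mu^{c}}^{\perp}$, and the reverse inclusion follows by the same symmetry/involutivity argument. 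This proves (a).

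For part (b), the decomposition $C = C_{\mu,\mu^{c}} \oplus C'_{\mu,\mu^{c}}$ is essentially a restatement of (a): because $C$ maps $\h_{\mu}$ onto $\h_{\mu^{c}}$ and $\h_{\mu}^{\perp}$ onto $\h_{\mu^{c}}^{\perp}$, and because $\h = \h_{\mu} \oplus \h_{\mu}^{\perp} = \h_{\mu^{c}} \oplus \h_{\mu^{c}}^{\perp}$, the map $C$ respects both direct sum decompositions, so it splits as the direct sum of its restrictions $C|_{\h_{\mu}} : \h_{\mu} \to \h_{\mu^{c}}$ and $C|_{\h_{\mu}^{\perp}} : \h_{\mu}^{\perp} \to \h_{\mu^{c}}^{\perp}$. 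Each restriction is antilinear (inherited from $C$), isometric (inherited from $C$), and onto (by the equalities in (a)); note that these restrictions need not be conjugations in the usual sense because the domain and codomain differ — involutivity is not even meaningful unless $\mu^{c}$ and $\mu$ are mutually absolutely continuous.

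I do not anticipate a genuine obstacle here; the only mild subtlety is making sure the Radon–Nikodym / absolute-continuity bookkeeping under the $\Omega \mapsto \Omega^{*}$ operation is handled cleanly, i.e., that $\nu \ll \mu$ implies $\nu^{c} \ll \mu^{c}$ — but this is immediate from $\mu(\Omega) = 0 \iff \mu^{c}(\Omega^{*}) = 0$ and the identity $\mu_{C\vec{x}} = \mu_{\vec{x}}^{c}$ established above, which is the computational heart of the proof. The rest is formal manipulation with $C^{2} = I$ and \eqref{CCCCCCcccc}.
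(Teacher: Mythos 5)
Your proposal is correct and follows essentially the same route as the paper: you use Proposition \ref{4}(d) together with \eqref{CCCCCCcccc} to get $\mu_{C\vec{x}}=\mu_{\vec{x}}^{c}$, deduce $C\h_{\mu}\subset\h_{\mu^{c}}$ and $C\h_{\mu^{c}}\subset\h_{\mu}$, and then use $C^{2}=I$ and the isometry identity to obtain the equalities, the statement for the orthogonal complements, and the direct-sum decomposition in (b). Your explicit verification that $C\h_{\mu}^{\perp}\subset\h_{\mu^{c}}^{\perp}$ is in fact slightly more careful than the paper's appeal to Lemma \ref{nnNDF99}, which is stated only for $C$-invariant subspaces.
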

\begin{proof}
Let $\vec{x}\in\h_\mu$. 
 By Proposition \ref{4}(d), $CE(\cdot)C=E^c(\cdot)$ and thus
\begin{align*}
\langle E(\cdot) C \vec{x}, C \vec{x}\rangle  = \langle \vec{x}, C E(\cdot) C \vec{x}\rangle
 = \langle \vec{x}, E^c(\cdot) \vec{x}\rangle
 = \langle E^c(\cdot) \vec{x}, \vec{x}\rangle.
\end{align*}
Since $\langle E(\cdot)\vec{x},\vec{x}\rangle\ll\mu$, it follows that $\langle E(\cdot) C\vec{x}, C\vec{x}\rangle\ll\mu^c$ and thus $C\vec{x}\in\h_{\mu^c}$. Similarly, $C\h_{\mu^c}\subset \h_{\mu}$, thus $C\h_\mu= \h_{\mu^c}$ and $C\h_\mu^\perp= \h_{\mu^c}^\perp$ (Lemma \ref{nnNDF99}).
\end{proof}

Recall \cite[\S 48]{MR0045309} the  standard Boolean operations $\wedge$ and $\vee$ for $\mu_1, \mu_2 \in M_{+}(\T)$ defined on Borel subsets $\Omega$ of $\T$ by 
$$(\mu_1 \vee \mu_2) (\Omega) := \mu_1(\Omega) + \mu_2(\Omega);$$
$$(\mu_1 \wedge \mu_2)(\Omega) := \inf\{\mu_1(\Omega \cap A) + \mu_2(\Omega \setminus A): \mbox{$A$ is a Borel set}\}.$$ For a unitary operator $U$,  there exists a {\em scalar spectral measure} $\nu$, meaning that $\nu(\Delta) = 0$ if and only if $E(\Delta) = 0$, where $E(\cdot)$ is the spectral measure for $U$ \cite[p.~293]{ConwayFA} (also see the discussion in Theorem 3.8 in \cite{MPRCOUI}).  For  $\nu_1, \nu_2 \in M_{+}(\T)$ it was shown in  \cite[Prop.~3.10]{MPRCOUI} that $\h_{\nu_1}\subset \h_{\nu_2}$ if and only if $\nu_1\wedge \mu \ll \nu_2\wedge \mu.$

\begin{Corollary}\label{uuUUSSSS}
Let $U$ be a unitary operator on $\h$ and $\nu$ be any scalar spectral measure for $U$. Suppose that $\mu \in M_{+}(\T)$ satisfies $\mu^{c} \wedge \nu \ll \mu \wedge \nu$. If $C \in \mathscr{C}_{c}(U)$, we have the following. 
\begin{enumerate}
\item $C \h_{\mu} = \h_{\mu}$ and $C \h_{\mu}^{\perp} = \h_{\mu}^{\perp}$.
\item $C = C_{\mu} \oplus C_{\mu}^{\perp}$, where $C_{\mu} := C|_{\h_{\mu}}$ and $C_{\mu}^{\perp} = C|_{\h_{\mu}^{\perp}}$.
\item $C_{\mu} \in \mathscr{C}_{c}(U|_{\h_{\mu}})$ and $C_{\mu}^{\perp} \in \mathscr{C}_{c}(U|_{\h_{\mu}^{\perp}})$.
\end{enumerate}
\end{Corollary}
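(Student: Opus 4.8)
The plan is to deduce Corollary \ref{uuUUSSSS} from Theorem \ref{conj_dec} together with the comparison criterion for the spaces $\h_\nu$ recalled just before the statement. First I would observe that the hypothesis $\mu^c \wedge \nu \ll \mu \wedge \nu$ is precisely the condition (via \cite[Prop.~3.10]{MPRCOUI}, applied with $\nu_1 = \mu^c$ and $\nu_2 = \mu$) that $\h_{\mu^c} \subset \h_\mu$. Next I would want the reverse inclusion, and for this I would appeal to Proposition \ref{p3.1a}: since $\mu^c \wedge \nu \ll \mu \wedge \nu$, a short argument (intersecting with $\nu$ and using that $E(\Omega)=0 \iff \nu(\Omega)=0$, or alternatively replacing $\mu$ by $\mu \wedge \nu$ which is mutually absolutely continuous with respect to $E$ on the relevant part) shows $(\mu \wedge \nu)^c \ll \mu \wedge \nu$, whence Proposition \ref{p3.1a}(a) gives $\mu \wedge \nu \ll (\mu \wedge \nu)^c$, i.e. $\mu \wedge \nu \ll \mu^c \wedge \nu$. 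Again by \cite[Prop.~3.10]{MPRCOUI} this yields $\h_\mu \subset \h_{\mu^c}$, so in fact $\h_\mu = \h_{\mu^c}$.

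With the equality $\h_\mu = \h_{\mu^c}$ in hand, part (a) is immediate: Theorem \ref{conj_dec}(a) says $C\h_\mu = \h_{\mu^c} = \h_\mu$, and then $C\h_\mu^\perp = \h_\mu^\perp$ follows from Lemma \ref{nnNDF99} (or directly from Theorem \ref{conj_dec}(a) with $\h_{\mu^c}^\perp = \h_\mu^\perp$). Part (b) is then just the observation that a conjugation (in particular an antilinear isometric involution) restricts to each of a pair of complementary reducing subspaces that it preserves; here $\h_\mu$ and $\h_\mu^\perp$ are reducing for $U$, and $C$ maps each onto itself by part (a), so $C = C_\mu \oplus C_\mu^\perp$ with each summand antilinear, isometric, and involutive on its space — hence a conjugation there. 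Recall from the discussion preceding Theorem \ref{conj_dec} that $\h_\mu$ is reducing for $U$, so $U = U|_{\h_\mu} \oplus U|_{\h_\mu^\perp}$ with both summands unitary.

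For part (c) I would simply note that the relation $CUC = U$ respects the direct sum decomposition $\h = \h_\mu \oplus \h_\mu^\perp$, which is reducing for $U$ and invariant under $C$ by part (a). Writing $C = C_\mu \oplus C_\mu^\perp$ and $U = U|_{\h_\mu} \oplus U|_{\h_\mu^\perp}$, the equation $CUC = U$ decomposes block-diagonally into $C_\mu (U|_{\h_\mu}) C_\mu = U|_{\h_\mu}$ and $C_\mu^\perp (U|_{\h_\mu^\perp}) C_\mu^\perp = U|_{\h_\mu^\perp}$. Since each $C_\mu$, $C_\mu^\perp$ is already known to be a conjugation on its respective space, these identities say exactly $C_\mu \in \mathscr{C}_c(U|_{\h_\mu})$ and $C_\mu^\perp \in \mathscr{C}_c(U|_{\h_\mu^\perp})$.

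The only genuinely nontrivial step is establishing $\h_\mu = \h_{\mu^c}$ from the one-sided hypothesis $\mu^c \wedge \nu \ll \mu \wedge \nu$, and the crux there is the bookkeeping that turns the one-sided absolute continuity of the truncated measures into a two-sided relation via Proposition \ref{p3.1a}. I expect the main obstacle to be verifying cleanly that $(\mu \wedge \nu)^c \ll \mu \wedge \nu$ — this uses $\nu^c$ and $\nu$ being mutually absolutely continuous (because $\nu$ is a scalar spectral measure and $E^c$ is the spectral measure of $U^*$, which shares the same scalar spectral measures as $U$ by Proposition \ref{4}(a)), together with the fact that the Boolean operation $\wedge$ commutes with the operation $\mu \mapsto \mu^c$. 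Once that identity is in place, Proposition \ref{p3.1a}(a) and \cite[Prop.~3.10]{MPRCOUI} finish it routinely, and everything else is a formal direct-sum argument.
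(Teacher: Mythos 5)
Your proof is correct and, in its essential steps, is exactly the argument the paper intends: the hypothesis $\mu^{c}\wedge\nu\ll\mu\wedge\nu$ gives $\h_{\mu^{c}}\subset\h_{\mu}$ by \cite[Prop.~3.10]{MPRCOUI}, Theorem \ref{conj_dec}(a) gives $C\h_{\mu}=\h_{\mu^{c}}\subset\h_{\mu}$, Lemma \ref{nnNDF99} upgrades this to $C\h_{\mu}=\h_{\mu}$ and $C\h_{\mu}^{\perp}=\h_{\mu}^{\perp}$, and (b), (c) are the formal direct-sum bookkeeping you describe. The one place where you diverge is the ``nontrivial step'' you single out: the reverse inclusion $\h_{\mu}\subset\h_{\mu^{c}}$ is not needed at all, since $C\h_{\mu}\subset\h_{\mu}$ already suffices for (a) (and then $\h_{\mu^{c}}=C\h_{\mu}=\h_{\mu}$ comes for free). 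Moreover, your justification of $\nu\sim\nu^{c}$ in that detour is imprecise: Proposition \ref{4}(a) alone does not give that $U$ and $U^{*}$ share scalar spectral measures; in general a scalar spectral measure for $U^{*}$ is $\nu^{c}$, and the equivalence $\nu\sim\nu^{c}$ requires $U\cong U^{*}$, which here follows from $C\in\mathscr{C}_{c}(U)$ via Lemma \ref{uustar}, or directly from Proposition \ref{4}(d) since $CE(\cdot)C=E^{c}(\cdot)$ forces $E$ and $E^{c}$ to have the same null sets. Since that whole step can be deleted, this does not affect the validity of your proof.
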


\begin{Corollary}\label{conj_dec1}Let $U$ be a unitary operator on $\h$ and  $\nu$ be any scalar spectral measure for $U$.  Fix a  $\mu \in M_+(\T)$.
If $C\h_\mu\subset\h_\mu$ for some  $C\in \mathscr{C}_c(U)$ then $\mu^c\wedge \nu\ll\mu\wedge \nu$.
\end{Corollary}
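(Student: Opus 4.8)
The plan is to read off the statement almost directly from Theorem~\ref{conj_dec} together with the characterization of inclusions $\h_{\nu_1}\subset\h_{\nu_2}$ recalled just before the corollary (from \cite[Prop.~3.10]{MPRCOUI}). First I would use that $C\in\mathscr{C}_c(U)$ and invoke Theorem~\ref{conj_dec}(a), which identifies the image of $\h_\mu$ under $C$ precisely: $C\h_\mu=\h_{\mu^c}$. The point here is that $C$ need not fix $\h_\mu$ in general, and the correct target space is $\h_{\mu^c}$; this is exactly where the relation $CE(\cdot)C=E^c(\cdot)$ from Proposition~\ref{4}(d) enters, as in the proof of Theorem~\ref{conj_dec}.

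Next, feeding in the hypothesis $C\h_\mu\subset\h_\mu$, the identity $C\h_\mu=\h_{\mu^c}$ immediately yields $\h_{\mu^c}\subset\h_\mu$. (In fact Lemma~\ref{nnNDF99} upgrades the hypothesis to $C\h_\mu=\h_\mu$, hence $\h_{\mu^c}=\h_\mu$, but only the one inclusion is needed for what follows.) Then I would apply the cited characterization $\h_{\nu_1}\subset\h_{\nu_2}\iff\nu_1\wedge\nu\ll\nu_2\wedge\nu$ with $\nu_1=\mu^c$ and $\nu_2=\mu$, where $\nu$ is the given scalar spectral measure for $U$. This turns $\h_{\mu^c}\subset\h_\mu$ into $\mu^c\wedge\nu\ll\mu\wedge\nu$, which is the desired conclusion.

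There is no real obstacle here: the corollary is a short deduction from Theorem~\ref{conj_dec}(a) and \cite[Prop.~3.10]{MPRCOUI}. The only step that requires a moment of attention is not to conflate $C\h_\mu$ with $\h_\mu$ — one must keep track of the conjugate measure $\mu^c$ throughout, since it is precisely the discrepancy between $\mu$ and $\mu^c$ (measured against the scalar spectral measure $\nu$) that the statement quantifies.
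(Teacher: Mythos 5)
Your argument is correct and matches the paper's proof: the paper likewise deduces $\h_{\mu^c}=C\h_\mu\subset\h_\mu$ from Theorem \ref{conj_dec} and then invokes the cited characterization of inclusions $\h_{\nu_1}\subset\h_{\nu_2}$ in terms of $\nu_1\wedge\nu\ll\nu_2\wedge\nu$ to conclude $\mu^c\wedge\nu\ll\mu\wedge\nu$.
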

\begin{proof}
  By Theorem \ref{conj_dec} we have  $C\h_\mu=\h_{\mu^c}\subset \h_\mu$. Thus, by \cite[Prop. 3.11]{MPRCOUI}, we obtain $\mu^c\wedge \nu\ll\mu\wedge \nu$.
\end{proof}

Since a unitary operator is normal, we see that $\ker(U-\alpha I)=\ker(U^*-\bar\alpha I)$ i.e.,  $\h_{\delta_\alpha}=\h_{{\delta_{\alpha}^c}}$, where $\delta_{\xi}$ denotes an atomic measure with atom at $\xi \in \T$. This gives us the following corollary.

\begin{Corollary} Let $U$ be a unitary operator on $\h$ and $C \in \mathscr{C}_{c}(U)$. 
  Let $\alpha \in \mathbb{T}$ be an eigenvalue for $U$.
  Then $$C=C_{\delta_\alpha}\oplus C_{\delta_\alpha}^\perp,$$ where 
  $C_{\delta_{\alpha}} = C|_{\h_{\delta_{\alpha}}}$ and $C_{\delta_{\alpha}}^{\perp} = C|_{\h_{\delta_{\alpha}}^{\perp}}$ are conjugations on $\ker(U-\alpha I)$ and $\ker(U-\alpha I)^\perp$, respectively. 
\end{Corollary}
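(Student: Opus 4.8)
The plan is to reduce the statement to Theorem \ref{conj_dec} applied to the point mass $\mu = \delta_\alpha \in M_+(\T)$, after making the elementary identification of $\h_{\delta_\alpha}$ with the eigenspace $\ker(U - \alpha I)$.

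First I would record that $\h_{\delta_\alpha} = \ker(U - \alpha I)$. For $\vec{x} \in \h$, the elementary measure $\mu_{\vec{x}}(\Omega) = \langle E(\Omega)\vec{x}, \vec{x}\rangle$ satisfies $\mu_{\vec{x}} \ll \delta_\alpha$ precisely when $\mu_{\vec{x}}(\T \setminus \{\alpha\}) = \|E(\T \setminus \{\alpha\})\vec{x}\|^2 = 0$, i.e.\ when $E(\{\alpha\})\vec{x} = \vec{x}$; since $E(\{\alpha\})\h = \ker(U - \alpha I)$ by the spectral theorem, this is exactly the statement $\vec{x} \in \ker(U - \alpha I)$. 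Hence also $\h_{\delta_\alpha}^\perp = \ker(U - \alpha I)^\perp$. Applying the same identification to $U^*$, whose associated spectral measure is $E^c(\cdot)$ by Proposition \ref{4}(a), gives $\h_{\delta_{\bar\alpha}} = \ker(U^* - \bar\alpha I)$, so the normality observation recorded just before the statement ($\ker(U - \alpha I) = \ker(U^* - \bar\alpha I)$) reads $\h_{\delta_\alpha} = \h_{\delta_\alpha^c}$, using that $\delta_\alpha^c = \delta_{\bar\alpha}$.

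Now I would invoke Theorem \ref{conj_dec}(a) with $\mu = \delta_\alpha$: for any $C \in \mathscr{C}_c(U)$ we get $C\h_{\delta_\alpha} = \h_{\delta_\alpha^c} = \h_{\delta_\alpha}$, so $\h_{\delta_\alpha}$ is $C$-invariant, and Lemma \ref{nnNDF99} then yields $C\h_{\delta_\alpha}^\perp = \h_{\delta_\alpha}^\perp$ as well. Since $C$ carries each of the two complementary orthogonal subspaces $\h_{\delta_\alpha}$ and $\h_{\delta_\alpha}^\perp$ onto itself, it splits as $C = C_{\delta_\alpha} \oplus C_{\delta_\alpha}^\perp$ with $C_{\delta_\alpha} = C|_{\h_{\delta_\alpha}}$ and $C_{\delta_\alpha}^\perp = C|_{\h_{\delta_\alpha}^\perp}$. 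Each summand inherits antilinearity and isometry from $C$, and each is involutive because $C^2 = I$ on $\h$ and both subspaces are $C$-invariant, so each restriction squares to the identity on its space; hence $C_{\delta_\alpha}$ and $C_{\delta_\alpha}^\perp$ are conjugations on $\ker(U - \alpha I)$ and $\ker(U - \alpha I)^\perp$ respectively, as claimed.

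I do not anticipate a genuine obstacle: essentially everything is a direct appeal to Theorem \ref{conj_dec} and Lemma \ref{nnNDF99}. The only point requiring a moment of care is the identification $\h_{\delta_\alpha} = \ker(U - \alpha I)$ (and its companion for $U^*$), which is a routine unwinding of the definitions via the spectral theorem.
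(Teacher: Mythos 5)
Your proposal is correct and follows the paper's own route: the paper derives this corollary from the observation (via normality) that $\ker(U-\alpha I)=\ker(U^*-\bar\alpha I)$, i.e.\ $\h_{\delta_\alpha}=\h_{\delta_\alpha^c}$, combined with Theorem \ref{conj_dec} and Lemma \ref{nnNDF99}, exactly as you do. Your explicit unwinding of $\h_{\delta_\alpha}=\ker(U-\alpha I)$ through the spectral measure is a correct filling-in of a step the paper leaves implicit.
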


 In the above discussion, we often have the hypothesis that $\mathscr{C}_{c}(U) \not = \varnothing$. As mentioned earlier, this is not always the case (e.g., if $U$ is not unitarily equivalent to its adjoint -- Lemma \ref{uustar}). 

  \section{Natural conjugations on vector valued $L^2$ spaces}\label{section3}

This section provides a model for conjugations on vector valued Lebesgue spaces and  will be useful in our description of $\mathscr{C}_c(U)$ in Theorem \ref{th1.2a}. This notation also sets up our discussion of models for bilateral shifts in the next section.

For a Hilbert space $\h$ with norm $\|\cdot\|_{\h}$ and a $\mu \in M_{+}(\T)$, consider the set $\mathscr{L}^{0}(\mu, \h)$ of $\h$-valued $\mu$-measurable functions $\f$ on $\T$ and the set
$$\mathscr{L}^2(\mu, \h) := \Big\{\f \in \mathscr{L}^{0}(\mu, \h): \|\f\|_{L^2(\mu. \h)} :=  \Big(\int_{\T} \|\f(\xi)\|^{2}_{\h}d\mu(\xi)\Big)^{\frac{1}{2}} < \infty\Big\}.$$ This is often described using tensor notation as $L^2(\mu) \otimes \h$. 

Also consider $\mathscr{L}^{\infty}(\mu, \mathcal{B}(\h))$, the $\mu$-essentially bounded $\mathcal{B}(\h)$-valued functions $\U$ on $\T$. For $\U \in \mathscr{L}^{\infty}(\mu, \mathcal{B}(\h))$, define the multiplication operator $\M_{\U}$ on $\mathscr{L}^2(\mu, \h)$ by
\begin{equation}\label{e1.4}
  (\M_{\U}\f)(\xi)=\U(\xi)\f(\xi)
\end{equation}
for  $\f\in \mathscr{L}^2(\mu,\h)$ and $\mu$-almost every $\xi \in \T$.
Clearly $\M_{\U}\in \mathcal{B}(\mathscr{L}^2(\mu,\h))$. If we use the notation $\U^{*}(\xi) = \U(\xi)^{*}$, one can verify that 
\begin{equation}\label{MuuMuu**}
\M_{\U}^{*} = \M_{\U^{*}}.
\end{equation} We will use $L^{\infty}(\mu) := \mathscr{L}^{\infty}(\mu, \C)$ to denote the scalar valued $\mu$-essentially bounded functions on $\T$. For ease of notation, we will write $\M_{\varphi}$, when $\phi \in L^{\infty}(\mu)$, instead of the more cumbersome $\M_{\varphi \I_{\h}}$, that is,
\begin{equation}\label{e1.5}
  (\M_\varphi \f)(\xi)=(\M_{\varphi \I_\h}\f)(\xi)=\varphi(\xi)\f(\xi)
\end{equation}
for $\f \in \mathscr{L}^2(\mu, \h)$ and $\mu$-almost every $\xi \in \T$. The case when $\phi(\xi) = \xi$ will play an prominent role in this paper in which case we have the vector-valued bilateral shift $\M_{\xi}$ on $\mathscr{L}^2(\mu, \h)$. 

Recall from \S \ref{basicfactsdsd} that $\mathscr{A}\!\mathcal{B}(\h)$ denotes the space of all bounded antilinear operators on $\h$ . We define $\mathscr{L}^{\infty}(\mu, \mathscr{A}\!\mathcal{B}(\h))$ to be the space of  all $\mu$-essentially bounded and $\mathscr{A}\!\mathcal{B}(\h)$-valued Borel functions on $\T$. Similarly as above, for $\CC\in \mathscr{L}^{\infty}(\mu, \mathscr{A}\!\mathcal{B}(\h))$, define
\begin{equation*}
  (\A_\CC \f)(\xi)=\CC(\xi)\f(\xi)
\end{equation*}
for $\f\in \mathscr{L}^2(\mu,\h)$ and $\mu$-almost every $\xi \in \T$.
One can check that  $\A_\CC\in \mathscr{A}\!\mathcal{B}(\mathscr{L}^2(\mu,\h))$.

For any conjugation $J$ on $\h$, define the conjugation $\J$ on $\mathscr{L}^2(\mu,\h)$ by
\begin{equation}\label{e1.6}
  (\J\f)(\xi)=J(\f(\xi)), \; \; \f\in \mathscr{L}^2(\mu,\h).
\end{equation}
 %
Notice that 
$  \J\M_\xi\J=\M_{\bar \xi}
$  \cite{MPRCOUI}.

We now focus our attention on the scalar valued $L^2(\mu)$ space and the set $\mathscr{C}_{c}(M_{\xi})$. This next result shows that when $\mathscr{C}_{c}(M_{\xi}) \not = \varnothing$, there must be some restrictions on $\mu$.  The set $\mathscr{C}_{c}(M_{\xi})$ was explored in  \cite{MR4169409} when $\mu = m$.

\begin{Proposition}\label{p3.1}Let $\mu \in M_{+}(\T)$ and   $C$ be a conjugation on $L^2(\mu )$ such that $C M_\xi=M_\xi C$. Then $\mu^c \ll \mu$ (and hence  $\mu \ll \mu^{c}$ by Proposition \ref{p3.1a}). 
\end{Proposition}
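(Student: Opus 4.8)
We must show that if $C$ is a conjugation on $L^2(\mu)$ commuting with $M_\xi$, then $\mu^c\ll\mu$.  The natural strategy is to translate the commutation relation into a statement about the spectral measure of $M_\xi$ and then invoke Proposition~\ref{4}(d).  Recall that $M_\xi$ on $L^2(\mu)$ has spectral measure $E(\Omega)=M_{\chi_\Omega}$, multiplication by the indicator function of the Borel set $\Omega\subset\T$; in particular, for $f\in L^2(\mu)$ the elementary measure is $d\mu_f=|f|^2\,d\mu$.  By Proposition~\ref{4}(d), $CM_\xi C=M_\xi$ forces $CE(\Omega)C=E^c(\Omega)=E(\Omega^*)=M_{\chi_{\Omega^*}}$ for every Borel $\Omega$.

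First I would fix a Borel set $\Omega\subset\T$ with $\mu(\Omega)=0$ and show $\mu^c(\Omega)=0$, i.e.\ $\mu(\Omega^*)=0$; this is exactly $\mu^c\ll\mu$.  Since $\mu(\Omega)=0$ we have $\chi_\Omega=0$ in $L^2(\mu)$, hence $E(\Omega)=0$ as an operator on $L^2(\mu)$, and therefore $E^c(\Omega)=CE(\Omega)C=0$ as well.  But $E^c(\Omega)=M_{\chi_{\Omega^*}}$, and $M_{\chi_{\Omega^*}}=0$ on $L^2(\mu)$ precisely when $\chi_{\Omega^*}=0$ $\mu$-a.e., i.e.\ $\mu(\Omega^*)=0$, which is $\mu^c(\Omega)=0$.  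This gives $\mu^c\ll\mu$ directly, and then Proposition~\ref{p3.1a}(a) yields $\mu\ll\mu^c$ as claimed in the parenthetical remark.

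Alternatively, if one prefers to avoid identifying the spectral measure explicitly, one can argue more concretely: the operators commuting with $M_\xi$ (and with $M_{\bar\xi}=M_\xi^*$) on $L^2(\mu)$ are exactly the multiplication operators $M_\phi$ with $\phi\in L^\infty(\mu)$, so the antilinear isometry $C$ satisfies $C=M_u\,C_0$ for a suitable unimodular $u\in L^\infty(\mu)$, where $C_0$ is conjugation composed with the pullback $f\mapsto f\circ(\xi\mapsto\bar\xi)$; for this pullback to even be defined as a bounded map $L^2(\mu)\to L^2(\mu)$ one needs $\mu^c$ and $\mu$ to be mutually absolutely continuous.  However, the spectral-measure route above is cleaner because it cites results already in the excerpt and sidesteps the need to first characterize $C$.

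The only mild obstacle is the bookkeeping needed to justify that $E(\Omega)=0$ on $L^2(\mu)$ is equivalent to $\mu(\Omega)=0$, and likewise for $E^c$: this is immediate since $E(\Omega)$ is multiplication by $\chi_\Omega$ and $\|E(\Omega)\|=1$ exactly when $\mu(\Omega)>0$.  No genuine difficulty arises; the content is entirely in recognizing that Proposition~\ref{4}(d) converts the commutation relation into $CE(\cdot)C=E^c(\cdot)$ and that an intertwining by the conjugation $C$ preserves the property of being the zero operator.
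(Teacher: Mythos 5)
Your argument is correct, but it reaches the key identity by a different route than the paper. The paper does not invoke the spectral measure at this point: it observes that $CM_{\xi}C=M_{\xi}$ together with \eqref{bsdv777ds77d7d7d73} gives $CM_{\bar\xi}C=M_{\bar\xi}$, deduces $CM_{p}C=M_{p^{\#}}$ for every trigonometric polynomial $p$ by antilinearity, extends this to $CM_{\varphi}C=M_{\varphi^{\#}}$ for all $\varphi\in L^{\infty}(\mu)$ by weak-$*$ density, and then derives a contradiction by taking $\varphi=\chi_{\Omega}$ for a Borel set with $\mu(\Omega)\neq 0$ and $\mu^{c}(\Omega)=0$. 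You instead quote Proposition \ref{4}(d) to convert $CM_{\xi}C=M_{\xi}$ into $CE(\cdot)C=E^{c}(\cdot)$ and identify the spectral measure of $M_{\xi}$ on $L^{2}(\mu)$ as $E(\Omega)=M_{\chi_{\Omega}}$ (standard, by uniqueness of the spectral measure), which yields exactly the special case $CM_{\chi_{\Omega}}C=M_{\chi_{\Omega^{*}}}$ of the paper's identity \eqref{e4.1}; applying it to a $\mu$-null set $\Omega$ gives $\mu^{c}(\Omega)=0$ directly, with no contradiction needed. Both proofs therefore hinge on the same computation with indicator functions; yours buys a shorter, more structural derivation that bypasses the weak-$*$ density step (which the paper asserts without detail), at the cost of importing the identification $E(\Omega)=M_{\chi_{\Omega}}$, while the paper's version stays entirely within the multiplication-operator calculus and proves along the way the stronger, independently useful identity $CM_{\varphi}C=M_{\varphi^{\#}}$ for all $\varphi\in L^{\infty}(\mu)$. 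Your parenthetical alternative via the commutant of $M_{\xi}$ is dispensable and, as phrased, somewhat circular (it presupposes that $f\mapsto f(\bar\xi)$ makes sense on $L^{2}(\mu)$, which is essentially what is being proved), but since you discard it in favor of the spectral-measure route, the main argument stands.
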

\begin{proof}
From \eqref{bsdv777ds77d7d7d73}, the identity  $C M_ \xi C=M_\xi$ implies that $CM_{\bar \xi}C=M_{\bar \xi}$. For any trigonometric polynomial $p(\xi)$ define 
$$p^{\#}(\xi) := \overline{p(\overline{\xi})}.$$
The above (and the antilinearity of $C$) shows that
$$CM_{p}C=M_{p^{\#}}.$$  Therefore, by the weak-$*$ density of the trigonometric polynomials  in $L^{\infty}(\mu)$, we obtain
 \begin{equation}\label{e4.1}CM_{\varphi}C=M_{{\varphi}^{\#}} \;  \text{for any $\varphi \in L^\infty(\mu)$,}\end{equation} where
\begin{equation}\label{shiprtt}
{\varphi}^{\#}(\xi)=\overline{\varphi(\overline{\xi})}.
\end{equation} If $\mu^c$ were not absolutely continuous with respect to $\mu$, then there would be a Borel set $\Omega \subset \T$ such that $\mu(\Omega)\not= 0$ but  $\mu^c(\Omega)= 0$.  However,  \eqref{e4.1} leads to contradiction with $\varphi=\chi_\Omega$, since $M_{\chi_{\Omega^*}}=0$ but $CM_{\chi_\Omega}C$ is not.
\end{proof}

\begin{Remark}
One can adapt the proof of the above proposition to the vector--valued space $\mathscr{L}^2(\mu,\h)$.
\end{Remark}

 Now let us focus on the situation  when $\mu^c\ll\mu$. In this case we also have that  $\mu\ll\mu^c$ (Proposition \ref{p3.1a}). For $\f\in \mathscr{L}^2(\mu,\h)$ and $\U\in \mathscr{L}^{\infty}(\mu, \mathcal{B}(\h))$, it  makes sense to write $\f(\bar \xi)$ or $\U(\bar \xi)$ and define 
 \begin{equation}\label{11111LL11ll}
 \U^\#(\xi):=\U^*(\bar\xi)=\U(\bar\xi)^*.
 \end{equation}
 
\begin{Proposition}\label{p3.2a}
Let  $\mu \in M_{+}(\T)$  such that  $\mu^c \ll \mu$ and let $h = d \mu^{c}/d \mu$. 
For a  Hilbert space $\h$, a conjugation $J$ on $\h$, and $\f \in \mathscr{L}^2(\mu, \h)$, define 
\begin{equation}\label{e1.7}
  (\J^{\#}\f)(\xi)=  (h(\xi))^{\frac{1}{2}}J(\f(\bar \xi))
\end{equation}
  for $\mu$-almost every $\xi \in \T$. 
 Then we have the following. 
\begin{enumerate}\item $\J^{\#}$ is a conjugation on $\mathscr{L}^2(\mu,\h)$;
\item $  \J^{\#}\M_\xi\J^{\#}=\M_\xi$.
\end{enumerate}
\end{Proposition}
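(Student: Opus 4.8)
The plan is to verify the two claims directly from the definition \eqref{e1.7}, using the Radon--Nikodym relation \eqref{rnd} (equivalently \eqref{e0r98utiofpwkdsdgf}) for $h = d\mu^c/d\mu$ as the essential input. First I would check that $\J^{\#}$ maps $\mathscr{L}^2(\mu,\h)$ into itself isometrically: for $\f \in \mathscr{L}^2(\mu,\h)$, the change of variables $\xi \mapsto \bar\xi$ and the fact that $J$ is isometric on $\h$ give
\[
\int_{\T} \|(\J^{\#}\f)(\xi)\|_{\h}^2\, d\mu(\xi) = \int_{\T} h(\xi)\, \|\f(\bar\xi)\|_{\h}^2\, d\mu(\xi) = \int_{\T} h(\xi)\, \|\f(\bar\xi)\|_{\h}^2\, d\mu^c(\bar\xi),
\]
and since $d\mu^c(\bar\xi) = d\mu^c{}^c(\xi)$ rewritten back in the original variable is $d\mu(\xi)$ after absorbing $h$ — more precisely I would use that $h(\bar\xi)\, d\mu(\bar\xi) = d\mu^c(\bar\xi) = d\mu(\xi)$ from \eqref{e0r98utiofpwkdsdgf} — this collapses to $\int_{\T}\|\f(\eta)\|_{\h}^2\, d\mu(\eta)$. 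So $\J^{\#}$ is a well-defined isometry; antilinearity is immediate since $J$ is antilinear and $h^{1/2}$ is a nonnegative scalar.

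Next I would show $(\J^{\#})^2 = I$. Applying \eqref{e1.7} twice,
\[
((\J^{\#})^2\f)(\xi) = h(\xi)^{1/2} J\big( h(\bar\xi)^{1/2} J(\f(\xi)) \big) = h(\xi)^{1/2} h(\bar\xi)^{1/2} J^2(\f(\xi)) = \big(h(\xi) h(\bar\xi)\big)^{1/2} \f(\xi),
\]
using that $h(\bar\xi)^{1/2}$ is a real scalar (so $J$ pulls it out without conjugation) and $J^2 = I$ on $\h$. By \eqref{rnd}, $h(\xi) h(\bar\xi) = 1$ for $\mu$-a.e.\ $\xi$, so $((\J^{\#})^2\f)(\xi) = \f(\xi)$, proving involutivity. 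Combined with the isometry property, $\J^{\#}$ is a conjugation, giving (a).

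For (b), I would compute $(\J^{\#}\M_\xi\J^{\#}\f)(\xi)$ by unwinding the definitions: applying $\J^{\#}$ gives $h(\xi)^{1/2} J(\f(\bar\xi))$; applying $\M_\xi$ multiplies by $\xi$; applying $\J^{\#}$ again produces $h(\xi)^{1/2} J\big( \bar\xi\, h(\bar\xi)^{1/2} J(\f(\xi)) \big) = h(\xi)^{1/2} h(\bar\xi)^{1/2}\, \overline{\bar\xi}\, J^2(\f(\xi)) = (h(\xi)h(\bar\xi))^{1/2}\, \xi\, \f(\xi)$, where the conjugation $J$ turns the scalar $\bar\xi$ into $\xi$. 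Again \eqref{rnd} forces $h(\xi)h(\bar\xi) = 1$, so this equals $\xi\,\f(\xi) = (\M_\xi\f)(\xi)$, i.e.\ $\J^{\#}\M_\xi\J^{\#} = \M_\xi$.

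I expect the only genuine subtlety — and the point deserving care rather than difficulty — is the measure-theoretic bookkeeping in the isometry computation: one must justify the substitution $\xi \mapsto \bar\xi$ and correctly track how $h$ and the reflection interact, which is exactly what \eqref{e0r98utiofpwkdsdgf} is designed to handle. Everything else is a routine formal manipulation once one remembers that $h^{1/2}$ is a nonnegative real scalar and hence commutes past the antilinear $J$ unchanged, while the unimodular scalar $\bar\xi$ gets conjugated to $\xi$.
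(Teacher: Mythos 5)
Your proposal is correct and follows essentially the same route as the paper: the isometry via the substitution $\xi\mapsto\bar\xi$ together with $h(\bar\xi)\,d\mu(\bar\xi)=d\mu(\xi)$, involutivity and the commutation with $\M_\xi$ via $h(\xi)h(\bar\xi)=1$ from \eqref{rnd}. The only cosmetic difference is that the paper proves $\J^{\#}\M_\xi=\M_\xi\J^{\#}$ and invokes $(\J^{\#})^2=I$, whereas you expand $\J^{\#}\M_\xi\J^{\#}$ directly; the computations are interchangeable.
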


\begin{proof}
As discussed in Proposition \ref{p3.1a}, $\mu\ll\mu^c$ and  $d\mu^c=h^{\#}\,d\mu$ with $h^\#(\xi)\, h(\xi)=h(\bar \xi)h(\xi)=1$ for $\mu$ almost every $\xi \in \T$.

Since $J$ is antilinear on $\h$, one sees that $\J^{\#}$ is antilinear on $\mathscr{L}^2(\mu, \h)$. Moreover, for $\f\in \mathscr{L}^2(\mu,\h)$ we have
\begin{align*} \|\J^{\#}\f\|_{\mathscr{L}^2(\mu, \h)}^{2}&=\int \|h(\xi)^{\frac 12}\,J(\f(\bar \xi))\|_{\h}^2d\mu(\xi)\\
&=  \int \|J(\f(\bar \xi))\|_{\h}^2\, h(\xi)\,d\mu(\xi)\\
&= \int \|\f( \xi)\|_{\h}^2\, h(\bar\xi)\,d\mu(\bar\xi)\\
&= \int \|\f(\xi)\|_{\h}^2\,d\mu( \xi)\\
&=\|\f\|_{\mathscr{L}^2(\mu, \h)}^2.
\end{align*}
Note the use of \eqref{e0r98utiofpwkdsdgf} above.
Thus, $\J^{\#}$ is isometric on $\mathscr{L}^2(\mu, \h)$.

Next we show that $(\J^{\#})^2=I$. Indeed, for each $\f \in \mathscr{L}^2(\mu, \h)$,
\begin{align*} (\J^{\#} \J^{\#}\f) (\xi)&=h(\xi)^{\frac 12}\, J((\J^{\#}\f)(\bar \xi))\\
&=
  h( \xi)^{\frac 12}\, J\Big((h(\bar \xi))^{\frac 12}\, J(\f( \xi))\Big)  \\
&=  (h( \xi) h(\bar \xi)^{\frac 12}\, J( J(\f( \xi)))\\
&=\f(\xi).
\end{align*}
Again, note the use of \eqref{e0r98utiofpwkdsdgf} above. 
Therefore, $\J^{\#}$ is a conjugation. To prove (b), observe that for each $\f \in \mathscr{L}^2(\mu, \h)$ we have 
\begin{align*}
(\J^{\#}\M_\xi \f)(\xi)&= \J^{\#}(\M_\xi\f)(\xi)\\
&= h( \xi)^{\frac 12}\, J((\M_\xi\f)(\bar \xi))\\
&=h( \xi)^{\frac 12}\, J(\bar \xi\f(\bar \xi))\\
& = \xi h( \xi)^{\frac 12}\, J(\f(\bar \xi))\\
&=   \xi(\J^{\#}\f)(\xi)\\
& = (\M_\xi\J^{\#})\f(\xi). \qedhere
\end{align*}

\end{proof}

\begin{Remark} If $\mu = m$, Lebesgue measure on $\mathbb{T}$, then  $m=m^c$ and $h\equiv 1$ and  the conjugation \eqref{e1.7} coincides with the one considered in \cite{MR4169409}.
\end{Remark}

A special case worth pointing out is the scalar case $\h=\mathbb{C}$.
\begin{Corollary}
Let  $\mu \in M_{+}(\T)$  such that  $\mu^c\ll\mu$.
Let $h = d\mu^{c}/d \mu$ and define
\begin{equation}\label{e1.7a}
  (J^{\#} f)(\xi)=  h( \xi)^{\frac 12}\ \overline{f(\bar \xi)}, \quad f \in L^2(\mu).
\end{equation} Then $J^{\#}$ is a conjugation on $L^2(\mu)$ and  $  J^{\#} M_\xi J^{\#}= M_\xi$.
\end{Corollary}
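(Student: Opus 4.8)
The plan is to obtain this Corollary as the immediate specialization of Proposition \ref{p3.2a} to the case $\h = \C$, so essentially no new work is required. First I would note that $\C$ is a (one-dimensional) complex Hilbert space, and the map $J \colon \C \to \C$ given by $J z = \bar z$ is a conjugation on $\C$: it is antilinear, $|Jz| = |z|$, and $J^2 = I$. Plugging $\h = \C$ and this $J$ into the definition \eqref{e1.7} gives precisely $(\J^{\#} f)(\xi) = h(\xi)^{1/2}\,\overline{f(\bar\xi)}$ for $f \in \mathscr{L}^2(\mu, \C)$, and since $\mathscr{L}^2(\mu, \C) = L^2(\mu)$ with the multiplication operator $\M_\xi$ on the former agreeing with $M_\xi$ on the latter, the operator $\J^{\#}$ is exactly the operator $J^{\#}$ of \eqref{e1.7a}.

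With this identification in hand, part (a) of Proposition \ref{p3.2a} states that $\J^{\#}$ is a conjugation on $\mathscr{L}^2(\mu, \C) = L^2(\mu)$, which is the first assertion of the Corollary; and part (b) states $\J^{\#} \M_\xi \J^{\#} = \M_\xi$, which, under the identification $\M_\xi = M_\xi$, is exactly $J^{\#} M_\xi J^{\#} = M_\xi$. Both hypotheses needed for Proposition \ref{p3.2a} — namely $\mu \in M_{+}(\T)$ and $\mu^c \ll \mu$ — are precisely the standing hypotheses of the Corollary, and $h = d\mu^c/d\mu$ is the same function in both statements, so the invocation is legitimate. One small point I would make explicit is that when $\h = \C$, antilinear isometric maps on $\h$ are the same as scalar multiplication by unimodular constants composed with complex conjugation, so $J z = \bar z$ is, up to an irrelevant unimodular factor, the only conjugation on $\C$; this is why the Corollary can be stated without reference to a choice of $J$.

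There is essentially no obstacle here: the entire content of the Corollary is already contained in Proposition \ref{p3.2a}, and the proof amounts to writing ``Apply Proposition \ref{p3.2a} with $\h = \C$ and $Jz = \bar z$.'' If one wanted a self-contained verification instead, the only mildly delicate step would be the isometry computation, which uses the change-of-variables identity $d\mu(\xi) = h(\bar\xi)\,d\mu(\bar\xi)$ and the relation $h(\xi)h(\bar\xi) = 1$ from \eqref{e0r98utiofpwkdsdgf} (equivalently Proposition \ref{p3.1a}(b)); the involution property $(J^{\#})^2 = I$ uses the same relation, and the intertwining with $M_\xi$ is a one-line calculation exactly as in the proof of Proposition \ref{p3.2a}(b). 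Accordingly I would simply write the one-sentence deduction and not reproduce those calculations.
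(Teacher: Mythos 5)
Your proposal is correct and matches the paper's (implicit) proof exactly: the Corollary is stated as the scalar specialization of Proposition \ref{p3.2a}, and taking $\h = \C$ with $Jz = \bar z$ in \eqref{e1.7} yields \eqref{e1.7a} verbatim, so both conclusions follow immediately from parts (a) and (b) of that proposition. Nothing further is needed.
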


In particular, observe that 
$$\mu^{c} \ll \mu \Longrightarrow \mathscr{C}_{c}(M_{\xi}) \not = \varnothing.$$

The following echos a result from \cite[Proposition 4.2]{MR4169409}. Recall the notation from \eqref{11111LL11ll}.

\begin{Proposition}\label{p3.4} Let $J$ be a conjugation on $\h$, $\J^\#$ be defined by \eqref{e1.7},  and let $\U\in \mathscr{L}^{\infty}(\mu, \mathcal{B}(\h))$ be a unitary operator valued function. Then we have the following. 
\begin{enumerate}
\item $\J^\#\M_\U\J^{\#}=\J\M_{(\U^\#)^*} \J$;
  \item $\M_{\U}\J^{\#}$ is a conjugation on $\mathscr{L}^2(\mu, \h)$ if and only if 
  $$J\U( \xi)J=\U^\#( \xi)=\U^*(\bar \xi)$$ for $\mu$-almost every $\xi \in \T$;
      \item If $\M_{\U}\J^{\#}$ is a conjugation on $\mathscr{L}^2(\mu, \h)$ then  $\M_{\U}\J^{\#}=\J^{\#}\M_{\U^*}$;
  \item $\big(\M_\U\J^{\#}\big)\M_\xi\big(\M_\U\J^{\#}\big)=\M_{\xi}$.
\end{enumerate}
\end{Proposition}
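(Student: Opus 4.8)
\textbf{Proof strategy for Proposition \ref{p3.4}.} The plan is to treat the four parts essentially as a chain of direct computations with the defining formula \eqref{e1.7} for $\J^{\#}$, reducing everything to pointwise identities in $\h$ and the algebraic properties of the conjugation $J$ (namely $J^2 = I_\h$, antilinearity, and $\langle Jx, Jy\rangle = \langle y, x\rangle$). The scalar factor $h(\xi)^{1/2}$ must be tracked carefully, but it always pairs off using the crucial identity $h(\xi)h(\bar\xi) = 1$ from \eqref{e0r98utiofpwkdsdgf} (equivalently \eqref{rnd}), which is where the hypothesis $\mu^c \ll \mu$ enters. Throughout I will freely use Proposition \ref{p3.2a}, which already establishes that $\J^{\#}$ is a conjugation with $\J^{\#}\M_\xi\J^{\#} = \M_\xi$.

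For part (a), I would compute $(\J^{\#}\M_\U\J^{\#}\f)(\xi)$ by applying the three operators in turn: first $(\J^{\#}\f)(\eta) = h(\eta)^{1/2} J(\f(\bar\eta))$, then multiply by $\U$, then apply $\J^{\#}$ again. The $\bar\xi$-substitutions and the $h(\xi)^{1/2} h(\bar\xi)^{1/2} = 1$ cancellation leave $(\J^{\#}\M_\U\J^{\#}\f)(\xi) = J\,\U(\bar\xi)\,J(\f(\xi))$. Recognizing $J\U(\bar\xi)J$ as the pointwise action, and recalling $\U^{\#}(\xi) = \U^*(\bar\xi)$ from \eqref{11111LL11ll}, one rewrites $J\U(\bar\xi)J = J(\U^{\#})^*(\xi)J$ so that the operator is $\J \M_{(\U^{\#})^*}\J$ (using the definition \eqref{e1.6} of $\J$ and that $\J\M_\V\J$ acts pointwise by $V \mapsto JVJ$). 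For part (b): $\M_\U\J^{\#}$ is automatically antilinear and isometric (a unitary-valued multiplier composed with a conjugation), so the content is the involution condition $(\M_\U\J^{\#})^2 = I$. Expanding $(\M_\U\J^{\#}\M_\U\J^{\#}\f)(\xi)$ gives $h(\xi)^{1/2}h(\bar\xi)^{1/2}\,\U(\xi)\,J\,\U(\bar\xi)\,J(\f(\xi)) = \U(\xi)\big(J\U(\bar\xi)J\big)\f(\xi)$; this equals $\f(\xi)$ for all $\f$ and $\mu$-a.e.\ $\xi$ if and only if $\U(\xi) J\U(\bar\xi) J = I_\h$ $\mu$-a.e., i.e.\ (since $\U(\xi)$ is unitary) $J\U(\bar\xi)J = \U(\xi)^* = \U^*(\xi)$, which after the substitution $\xi \leftrightarrow \bar\xi$ is precisely $J\U(\xi)J = \U^*(\bar\xi) = \U^{\#}(\xi)$. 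A small measure-theoretic point to note is that a pointwise-a.e.\ identity of bounded operator-valued functions is equivalent to the operator identity on $\mathscr{L}^2(\mu,\h)$; this is standard and I would assert it without elaboration.

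For part (c), assuming the equivalent condition in (b), I would show $\M_\U\J^{\#}$ and $\J^{\#}\M_{\U^*}$ agree pointwise: $(\M_\U\J^{\#}\f)(\xi) = h(\xi)^{1/2}\U(\xi)J(\f(\bar\xi))$, while $(\J^{\#}\M_{\U^*}\f)(\xi) = h(\xi)^{1/2}J\big(\U^*(\bar\xi)\f(\bar\xi)\big) = h(\xi)^{1/2}\big(J\U^*(\bar\xi)J\big)J(\f(\bar\xi))$; so the claim reduces to $\U(\xi) = J\U^*(\bar\xi)J$, which is the condition from (b) with $\U$ replaced by $\U^*$ (equivalently, take adjoints and conjugate by $J$ in $J\U(\xi)J = \U^*(\bar\xi)$, using $J^2 = I$). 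Part (d) is then almost immediate: by (c), $\M_\U\J^{\#} = \J^{\#}\M_{\U^*}$, and since $\M_{\U^*}$ commutes with $\M_\xi$ (scalar times operator-valued multipliers commute) while $\J^{\#}\M_\xi\J^{\#} = \M_\xi$ by Proposition \ref{p3.2a}(b), we get $(\M_\U\J^{\#})\M_\xi(\M_\U\J^{\#}) = \J^{\#}\M_{\U^*}\M_\xi\M_{\U^*}\J^{\#} = \J^{\#}\M_\xi\M_{\U^*}\M_{\U^*}\J^{\#}$; hmm — more cleanly, write it as $\M_\U(\J^{\#}\M_\xi\J^{\#})\M_{\U^*} = \M_\U\M_\xi\M_{\U^*} = \M_\xi\M_\U\M_{\U^*} = \M_\xi$, using that $\M_\U\M_{\U^*} = \M_{\U\U^*} = I$ since $\U$ is unitary-valued and $\M_\xi$ commutes with any operator-valued multiplier. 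Alternatively one can invoke Lemma \ref{csu} together with the fact that $\M_\U\J^{\#}$ is a conjugation commuting-after-squaring; but the direct computation is cleanest.

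I do not anticipate a genuine obstacle here — the proposition is a bookkeeping exercise. The one place requiring care is the consistent handling of the substitution $\xi \mapsto \bar\xi$ inside the $h$-factors and inside the arguments of $\U$ and $\f$, making sure the two half-powers $h(\xi)^{1/2}$ and $h(\bar\xi)^{1/2}$ always meet and collapse via \eqref{e0r98utiofpwkdsdgf}; getting the order of composition right in the antilinear setting (so that scalars get conjugated the correct number of times — note $h \geq 0$ so $\overline{h(\xi)^{1/2}} = h(\xi)^{1/2}$, which removes any sign/conjugation subtlety from that factor) is the only thing to watch. Since $h$ is real and nonnegative $\mu$-a.e., the antilinearity of $J$ does not disturb the $h$-factors, and the computations go through exactly as in the proof of Proposition \ref{p3.2a}.
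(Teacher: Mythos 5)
Your computations are correct and, for parts (a)--(c), essentially mirror the paper's: (a) is the same pointwise calculation collapsing $h(\xi)^{1/2}h(\bar\xi)^{1/2}=1$; for (b) the paper routes the involution check through Lemma \ref{csu} together with part (a) (reducing to $\J^{\#}\M_{\U}\J^{\#}=\M_{\U}^{*}$), while you expand $(\M_{\U}\J^{\#})^{2}$ directly — same content, slightly different bookkeeping; for (c) the paper argues algebraically from $(\M_{\U}\J^{\#})^{2}=I$ and $\M_{\U}\M_{\U^{*}}=I$, while you verify the identity pointwise, again equivalent. The one genuine difference is (d). Your argument passes through (c), hence tacitly assumes that $\M_{\U}\J^{\#}$ is a conjugation (equivalently the symmetry condition in (b)). The paper instead proves (d) by computing $\M_{\U}\J^{\#}\M_{\xi}$ and $\M_{\xi}\M_{\U}\J^{\#}$ separately and showing they agree, i.e.\ it establishes the commutation relation $\M_{\U}\J^{\#}\M_{\xi}=\M_{\xi}\M_{\U}\J^{\#}$ for \emph{every} unitary-valued $\U$, with no symmetry hypothesis. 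Note that the displayed identity in (d) as literally stated is only correct under that extra hypothesis: in the scalar case $\h=\C$, $\mu=m$, $\U(\xi)=\xi$ one gets $(\M_{\U}\J^{\#})\M_{\xi}(\M_{\U}\J^{\#})=M_{\xi^{3}}$, since in general $(\M_{\U}\J^{\#})\M_{\xi}(\M_{\U}\J^{\#})=\M_{\xi}(\M_{\U}\J^{\#})^{2}$. So your proof covers exactly the case in which (d) is true and in which it is used later (e.g.\ in Theorem \ref{089foidjgigifghjhjh77766}), but it does not yield the unconditional commutation relation that the paper's computation gives; if you want (d) without invoking (b)/(c), do the direct two-sided computation as in the paper, which costs only two lines and needs nothing beyond the definition \eqref{e1.7}.
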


\begin{proof} For every $\f \in \mathscr{L}^2(\mu, \h)$, observe that for $\mu$ almost every $\xi \in \T$ we have 
\begin{align*}
 (\J^\#\M_\U\J^{\#}  \f)(\xi)& =h( \xi)^{\frac 12}J\big((\M_\U\J^{\#} \f)(\bar \xi)\big)\\
 &=h( \xi)^{\frac 12}\,J\big(\U(\bar\xi)\big(\J^{\#} \f\big)(\bar\xi)\big)\\
 &=h( \xi)^{\frac 12}\,J\big(\U(\bar\xi)h(\bar \xi)^{\frac 12} J (\f(\xi)\big))\\
&=(h( \xi)h(\bar\xi))^{\frac 12}\,J(\U(\bar\xi) J(\f( \xi)))\\
&=J\U(\bar \xi) J(\f( \xi))\\
&=J((\U^\#(\xi))^* J(\f( \xi)))\\
&=(\J(\U^\#)^* \J\f)( \xi).
\end{align*}
Note the use of \eqref{11111LL11ll} above. This proves (a).

Note that 
 $\M_\U\J^{\#}$ is antilinear and isometric on $\mathscr{L}^2(\mu, \h)$. To prove
that $\M_{\U} \J^{\#}$ is a conjugation (and thus complete the proof of (b)), Lemma \ref{csu}  says that we just need to check the identity 
 $$\J^{\#}\M_\U \J^{\#} = \M_{\U}^{*}.$$
  By (a) this is equivalent to  $J\U(\bar \xi)J=\U^*(\xi)$ since, by \eqref{MuuMuu**}, $(\M_\U^*\f)(\xi)=\U^*(\xi)\f(\xi)$. 
  
  Statement (c) follows from the fact that $\M_{\U} \J^{\#}$ is a conjugation on $\mathscr{L}^2(\mu, \h)$, and so $(\M_{\U} \J^{\#})( \M_{\U} \J^{\#}) = I$, along with  the fact $\M_{\U} \M_{\U^{*}} = \M_{\U^{*}} \M_{\U} = I$ (since $\U(\xi)$ is unitary for $\mu$-almost every $\xi \in \T$).
  
 To see (d), observe that for any $\f \in \mathscr{L}^2(\mu, \h)$,
\begin{align*}
(\M_\U\J^{\#}\M_\xi\f)(\xi)&= \U(\xi)\J^{\#}(\M_\xi\f)(\xi)\\
&= \U(\xi)h( \xi)^{\frac 12}\, J((\M_\xi\f)(\bar \xi))\\
&=h( \xi)^{\frac 12}\,\U(\xi) J(\bar \xi\f(\bar \xi))\\
& =\xi h( \xi)^{\frac 12}\, \U(\xi)J(\f(\bar \xi))
\end{align*}
while
\begin{align*}
(\M_\xi\M_\U\J^{\#}\f)(\xi)&=   \xi(\M_\U\J^{\#}\f)(\xi)\\
&   = \xi\U(\xi)(\J^{\#}\f)(\xi) \\
&  =\xi\U(\xi)h( \xi)^{\frac 12}\, J(\f(\bar \xi))\\
&= \xi h( \xi)^{\frac 12}\, \U(\xi)J(\f(\bar \xi)),
\end{align*}
which completes the proof of (d).
\end{proof}

\section{Conjugations and bilateral shifts}\label{bilarererer}

Many interesting, and naturally occurring,  unitary operators are bilateral shifts. Examples include (i) the translation operator $(U f)(x) = f(x - 1)$ on $L^2(\R)$; (ii) the  dilation operator $(U f)(x) = \sqrt{2} f(2 x)$ on $L^2(\R)$, (iii) the Fourier transform on $L^2(\R)$, (iv) the Hilbert transform on $L^2(\R)$, and (v)  the special class of multiplication operators $U f = \psi f$ on $L^2(m, \T)$, where $\psi$ is an inner function.  We refer the reader to  \cite[Example 6.3]{JMMPWR_OT28} to see the bilateral nature of each of these operators is worked out carefully. This section gives an initial  description of $\mathscr{C}_c(U)$ for this class of operators, along with  the important fact that $\mathscr{C}_{c}(U) \not = \varnothing$.  Another, more concrete,  description will be discussed in the next section. Let us begin with a precise definition of    the term  ``bilateral shift''.

\begin{Definition}\label{lafdhgjdfpsgszhnigbv888}
A unitary operator $U$ on $\h$ is a {\em bilateral shift} if there is a subspace $\mathcal{M} \subset \h$ for which
\begin{enumerate}
\item $U^{n} \mathcal{M} \perp \mathcal{M}$ for all $n \in \Z \setminus \{0\}$;
\item ${\displaystyle \h = \bigoplus_{n = -\infty}^{\infty} U^{n} \mathcal{M}}$.
\end{enumerate}
In the above, note that $U^{-1} = U^{*}$. The subspace $\mathcal{M}$ is called an {\em associated wandering subspace} for the bilateral shift $U$.  Of course there is {\em the} bilateral shift $M_{\xi}$ on $L^2(m, \T)$ discussed earlier where a wandering subspace $\mathcal{M}$ can be taken to be the constant functions. 
\end{Definition}

Though the wandering subspace $\mathcal{M}$ in Definition \ref{lafdhgjdfpsgszhnigbv888}  is not unique, its dimension is \cite{MR152896}. The term ``bilateral shift ''  comes from the fact that since
\begin{equation}\label{e15}\mathcal{H}=\bigoplus_{n=-\infty}^{\infty}\, U^n \mathcal{M},
\end{equation}
every $\vec{x} \in \h$ can be uniquely represented as
$$\vec{x} = \sum_{n = -\infty}^{\infty} U^n \vec{x}_n, \; \mbox{where $\vec{x}_n \in \mathcal{M}$ for all $n \in \Z$}.$$
This allows us to define a natural unitary operator
 \begin{equation}\label{WWWWW}
W: \h \to \mathscr{L}^2(m,\m), \quad  W\Big(\bigoplus_{n = -\infty}^{\infty}U^n \vec{x}_n\Big)=\sum_{n = -\infty}^{\infty} \vec{x}_n \xi^n.\end{equation}
 Moreover, thanks to \eqref{e15} and \eqref{WWWWW},
 $WUW^{*}=\M_\xi,$
 where
  $\M_{\xi}$ is the bilateral shift from \eqref{e1.5} defined on $\mathscr{L}^2(m, \mathcal{M})$ by 
  $$\M_{\xi} \f(\xi) = \xi \f(\xi), \; \mbox{where} \; \; 
  \f(\xi) = \sum_{n = -\infty}^{\infty} \vec{x}_{n} \xi^n \in \mathscr{L}^2(m, \mathcal{M}).$$

For a bilateral shift $U$ on $\h$ we wish to describe $\mathscr{C}_c(U)$.
Since $W^{*} U W = \M_{\xi}$ on $\mathscr{L}^2(m, \mathcal{M})$, we know  that for any $C \in \mathscr{C}_{c}(U)$ the mapping 
$$\CCC=W^{*}CW$$  is a conjugation on $\mathscr{L}^2(m,\m)$ such that $\CCC\mathbf{M}_\xi\CCC=\mathbf{M}_{ \xi}$.
The following result from  \cite[Theorem 4.3]{MR4169409} describes $\CCC$.

%

\begin{Theorem}\label{jjHHbbhGHJKK}
For $\M_{\xi}$ on $\mathscr{L}^2(m, \mathcal{M})$ we have the following. 
\begin{enumerate}
\item $\mathscr{C}_{c}(\M_{\xi}) \not = \varnothing$.
\item Fix a conjugation $J$ on $\mathcal{M}$. For a conjugation  $\CCC$ on  $\mathscr{L}^2(m,\m)$,
 the following are equivalent.
 \begin{enumerate}
 \item  $\CCC \in \mathscr{C}_{c}(\M_{\xi})$;
      \item There is $\mathbf{U}\in \mathscr{L}^\infty(m,\mathcal{B}(\m))$ such that $\mathbf{U}(\xi)$ is unitary for almost every $\xi \in \T$, $\M_{\U}$ is $\J^\#$--symmetric, and $\CCC= \M_{\mathbf{U}}{\J}^\#={\J}^\#\M_{\mathbf{U}^*}$.
      \end{enumerate}
      \end{enumerate}
\end{Theorem}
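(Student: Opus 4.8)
The plan is to reduce everything to the scalar-spectral-measure theory already developed, specialized to $\mu = m$ (Lebesgue measure on $\mathbb{T}$), which is the scalar spectral measure for $\M_\xi$ on $\mathscr{L}^2(m,\mathcal{M})$. Part (a) is immediate: since $m^c = m$, we have $m^c \ll m$, so by Proposition \ref{p3.2a} (or the scalar corollary, applied to a fixed conjugation $J$ on $\mathcal{M}$) the map $\J^\#$ — which for $\mu = m$ is just $\J$ of \eqref{e1.6}, since $h \equiv 1$ — is a conjugation on $\mathscr{L}^2(m,\mathcal{M})$ satisfying $\J\M_\xi\J = \M_\xi$. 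Hence $\mathscr{C}_c(\M_\xi) \neq \varnothing$.

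For part (b), the implication $(ii)\Longrightarrow(i)$ is the easy direction and follows directly from Proposition \ref{p3.4}: if $\U \in \mathscr{L}^\infty(m,\mathcal{B}(\mathcal{M}))$ is unitary-valued and $\M_\U$ is $\J^\#$-symmetric (meaning $J\U(\xi)J = \U^\#(\xi) = \U^*(\bar\xi)$ a.e., by Proposition \ref{p3.4}(b)), then $\M_\U\J^\#$ is a conjugation, it equals $\J^\#\M_{\U^*}$ by Proposition \ref{p3.4}(c), and it commutes with $\M_\xi$ by Proposition \ref{p3.4}(d). So $\CCC = \M_\U\J^\# \in \mathscr{C}_c(\M_\xi)$.

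The substantive direction is $(i)\Longrightarrow(ii)$. Given $\CCC \in \mathscr{C}_c(\M_\xi)$, I want to produce the unitary-valued symbol $\U$. The idea is to consider the linear map $\M_\U := \CCC\J^\#$ (equivalently $\CCC\J$ since $h\equiv1$): it is a linear isometry of $\mathscr{L}^2(m,\mathcal{M})$ onto itself, hence unitary, and since $\CCC\M_\xi\CCC = \M_\xi$ and $\J\M_\xi\J = \M_\xi$, it commutes with $\M_\xi$. Thus $\M_\U$ is a unitary operator on $\mathscr{L}^2(m,\mathcal{M})$ commuting with the bilateral shift $\M_\xi$, and the standard structure theory for the commutant of $\M_\xi$ (the $\{\M_\xi\}'$ consists exactly of the multiplication operators $\M_\U$ with $\U \in \mathscr{L}^\infty(m,\mathcal{B}(\mathcal{M}))$, and it is unitary iff $\U(\xi)$ is unitary a.e.) gives $\CCC\J^\# = \M_\U$ for some unitary-valued $\U$, i.e. $\CCC = \M_\U\J^\#$. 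Finally, because $\CCC$ is involutive, $\M_\U\J^\#$ is a conjugation, and Proposition \ref{p3.4}(b) then forces $J\U(\xi)J = \U^\#(\xi)$ a.e., i.e. $\M_\U$ is $\J^\#$-symmetric; the alternate form $\CCC = \J^\#\M_{\U^*}$ is then Proposition \ref{p3.4}(c). The main obstacle — really the only nontrivial input — is the identification of the commutant of the vector-valued bilateral shift with $\mathscr{L}^\infty$ multiplication operators; this is classical (it is the reflexivity/commutant description behind the cited \cite[Theorem 4.3]{MR4169409}), so in the write-up I would either invoke it directly or note that conjugating by the unitary $W$ of \eqref{WWWWW} reduces the general bilateral shift to $\M_\xi$ on $\mathscr{L}^2(m,\mathcal{M})$ where this commutant fact is standard.
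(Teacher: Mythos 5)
Your proposal is correct, and it is essentially the argument behind the result as the paper uses it: the paper does not reprove Theorem \ref{jjHHbbhGHJKK} but imports it from \cite[Theorem 4.3]{MR4169409}, and your route — noting $m^c=m$ so $\J^{\#}=\J$ works for (a), getting (ii)$\Rightarrow$(i) from Proposition \ref{p3.4}, and for (i)$\Rightarrow$(ii) forming the unitary $\CCC\J^{\#}$ in the commutant of $\M_{\xi}$ and invoking the classical identification of that commutant with $\mathscr{L}^{\infty}(m,\mathcal{B}(\m))$-multiplications — is exactly the ``multiply by $\J^{\#}$'' trick the paper itself highlights in \S\ref{innerfunctions}. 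The only input you leave to the literature (the commutant description, plus the fact that $\M_{\U}$ unitary forces $\U(\xi)$ unitary a.e., which uses that commuting with the unitary $\M_{\xi}$ gives commuting with $\M_{\xi}^{*}$ as well) is the same input the cited source relies on, so nothing is missing.
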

Note that $m=m^c$ and the definition of $\J^\#$  from \eqref{e1.7} coincides with the one appearing in  \cite{MR4169409}.

Theorem \ref{jjHHbbhGHJKK} yields a description of $\mathscr{C}_c(U)$ when $U$ is a bilateral shift.

\begin{Corollary}\label{cococlclcucuc}
Let $U$ be a unitary bilateral shift on $\h$ with an associated wandering subspace $\mathcal{M}$. The we have the following. 
\begin{enumerate}
\item $\mathscr{C}_{c}(U) \not = \varnothing$.
\item For a conjugation $C$ on $\h$, the following are equivalent:
\begin{enumerate}
\item $C \in \mathscr{C}_s(U)$;
\item $\CCC = WC W^{*}$ is a conjugation on $\mathscr{L}^2(m, \mathcal{M})$ that satisfies any of the equivalent conditions of Theorem \ref{jjHHbbhGHJKK}.
\end{enumerate}
\end{enumerate}
\end{Corollary}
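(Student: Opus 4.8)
The plan is to transfer both assertions through the unitary $W\colon\h\to\mathscr{L}^2(m,\mathcal{M})$ of \eqref{WWWWW}, which intertwines $U$ with the vector-valued bilateral shift via $WUW^{*}=\M_{\xi}$, and then to read everything off Theorem \ref{jjHHbbhGHJKK}. For part (1), Theorem \ref{jjHHbbhGHJKK}(1) supplies some $\CCC\in\mathscr{C}_{c}(\M_{\xi})$; setting $C:=W^{*}\CCC W$, Lemma \ref{lem1.2} makes $C$ a conjugation on $\h$, and conjugating the relation $\CCC\M_{\xi}\CCC=\M_{\xi}$ back by $W$ yields $CUC=U$, so $\mathscr{C}_{c}(U)\neq\varnothing$.

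For part (2) I would work with the correspondence $C\mapsto\CCC=WCW^{*}$, which by Lemma \ref{lem1.2} is a bijection between conjugations on $\h$ and conjugations on $\mathscr{L}^2(m,\mathcal{M})$, with inverse $\CCC\mapsto W^{*}\CCC W$. The single substantive computation is the transfer of the defining operator identity: since $W$ is unitary and $\M_{\xi}=WUW^{*}$,
\begin{equation*}
\CCC\,\M_{\xi}\,\CCC=\M_{\xi}\iff W(CUC)W^{*}=WUW^{*}\iff CUC=U .
\end{equation*}
Because condition (2)(b) asks exactly that $\CCC$ satisfy the equivalent conditions of Theorem \ref{jjHHbbhGHJKK}, that is $\CCC\in\mathscr{C}_{c}(\M_{\xi})$, the displayed chain shows that (2)(b) holds if and only if $C\in\mathscr{C}_{c}(U)$.

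The step I expect to be the crux is the identification of the conjugation class named in part (2)(a). The transfer above pins condition (2)(b) to the commuting relation $CUC=U$, whereas the symmetric class $\mathscr{C}_{s}(U)=\{C:CUC=U^{*}\}$ transfers, by the same computation, to $\CCC\,\M_{\xi}\,\CCC=\M_{\bar\xi}$, i.e. $\CCC\in\mathscr{C}_{s}(\M_{\xi})$, which is not the class characterized in Theorem \ref{jjHHbbhGHJKK}. These two classes genuinely differ: already for $\mathcal{M}=\C$ the standard conjugation $(Cf)(\xi)=\overline{f(\xi)}$ on $L^2(m,\T)$ satisfies $C\M_{\xi}C=\M_{\bar\xi}$, so it lies in $\mathscr{C}_{s}(\M_{\xi})$ but not in $\mathscr{C}_{c}(\M_{\xi})$. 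Hence the equivalence in (2) holds precisely when the left-hand condition is $C\in\mathscr{C}_{c}(U)$, matching the commuting relation produced by the transfer; with the class so matched, the two implications of (2) are immediate from the displayed equivalence together with Theorem \ref{jjHHbbhGHJKK}. Reconciling the printed class in (2)(a) with the commuting relation forced by the cited theorem — rather than any analytic estimate — is the only delicate point of the argument.
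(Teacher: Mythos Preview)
The paper gives no explicit proof of this corollary; it is stated as an immediate consequence of Theorem \ref{jjHHbbhGHJKK} via the unitary $W$ of \eqref{WWWWW} satisfying $WUW^{*}=\M_{\xi}$. Your transfer argument through Lemma \ref{lem1.2} and Proposition \ref{bnjiovbhucfgyu} is exactly what the paper intends, just made explicit.

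You are also right about the crux: the class in (2)(a) must be $\mathscr{C}_{c}(U)$, not $\mathscr{C}_{s}(U)$. The entire section concerns commuting conjugations, the lead-in sentence to the corollary says Theorem \ref{jjHHbbhGHJKK} ``yields a description of $\mathscr{C}_c(U)$,'' and Theorem \ref{jjHHbbhGHJKK} characterizes $\mathscr{C}_{c}(\M_{\xi})$, which under $W$ corresponds to $CUC=U$ by your displayed chain. Your example $(Cf)(\xi)=\overline{f(\xi)}$ on $L^2(m,\T)$ confirms the symmetric and commuting classes genuinely differ. This is a typographical slip in the printed statement, not a defect in your argument.
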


Perhaps one might be a bit unsatisfied with the somewhat vague nature of our current description of $\mathscr{C}_{c}(U)$ for a bilateral shift $U$. The next section will give a much more concrete characterization.

\section{Unitary multiplication operators on $L^2(m, \T)$}\label{innerfunctions}

As discussed in  \cite[Example 5.16]{JMMPWR_OT28} there is  model for any bilateral shift $U$ on $\h$ (recall Definition \ref{lafdhgjdfpsgszhnigbv888}) as the multiplication operator $M_{\psi}$ on $L^2 = L^2(m, \T)$, where $\psi$ is an inner function whose degree is that of the dimension of any wandering subspace for $U$. In this section, we give a concrete description of $\mathscr{C}_c(M_{\psi})$. If $J^\#$ is the conjugation on $L^2$ defined by 
\begin{equation}\label{JJJJJJvvv}
 (J^\# f)(\xi) =f^\#(\xi)= \overline{f(\bar \xi)},
 \end{equation}
  and $C \in \mathscr{C}_c(M_{\psi})$, then $C J^\#$ is a unitary operator on $L^2$ for which 
  $$(C J^\#)M_{\psi} = M_{\psi} (C J^\#).$$ This trick was used in several places \cite{MR4083641, MR4169409, MR4502709}. The bounded operators on $L^2$ which commute with  $M_{\psi}$, i.e., the {\em commutant} of $M_{\psi}$,  were described in \cite[Theorem 7.3]{MPRCOUI}.


Recall the known fact (see for example \cite[Proposition 5.17]{JMMPWR_OT28}) that for an inner
 function $\psi$ we have the following orthogonal decomposition for $L^2$, namely, 
\begin{equation}\label{l2sum}L^2 = \bigoplus _{n = -\infty}^{\infty} \psi^{n} \K_{\psi},\end{equation}
where $\K_{\psi} := H^2 \cap (\psi H^2)^{\perp}$ is the model space associated with $\psi$ (see \cite{MR3526203} for a review of model spaces). In other words, $\K_{\psi}$ is a wandering subspace (as in Definition \ref{lafdhgjdfpsgszhnigbv888}) for the multiplication operator $M_{\psi}$.

Let us set up some notation to be used below. For an inner function $\psi$, let
$$N := \operatorname{dim} \K_{\psi} \in \N \cup \{\infty\}$$ and 
 $\{h_j\}_{1 \leq j \leq N}$ be a fixed orthonormal basis for  $\K_\psi$. There are several ``natural'' orthonormal bases one can choose \cite[Ch.~5]{MR3526203}. Observe that $N$ is finite if and only if $\psi$ is a finite Blaschke product with $N$ zeros, repeated according to multiplicity \cite[Prop.~5.19]{MR3526203}. Also define 
$$\bigoplus_{1 \leq j \leq N} L^2 = L^2 \oplus L^2 \oplus  \cdots \oplus L^2.$$
The norm of an  $\f = [f_{j}]_{1 \leq j \leq N}^{t}$ of $ \bigoplus_{1 \leq j \leq N} L^2$ is 
$$\|\f\| := \Big(\sum_{1 \leq j \leq N} \|f_j\|^{2}_{L^2}\Big)^{\frac{1}{2}}.$$
When $N = \infty$, we need to assume that the sum defining $\|\f\|$ above is finite. 
Furthermore, the operator $\bigoplus_{1 \leq j \leq N} M_{\xi}$ (called the {\em inflation} of the bilateral shift $M_{\xi}$ on $L^2$) is given by 
$$\Big(\bigoplus_{1 \leq j \leq N} M_{\xi}\Big) \f(\xi) = \xi \f(\xi) = [\xi f_{j}(\xi)]^{t}_{1 \leq j \leq N}.$$
We also define 
$$\ell^{2}_{N} := \Big\{\vec{x} = [x_j]^{t}_{1 \leq j \leq N}, x_j \in \C: \|\vec{x}\|_{\ell^{2}_{N}} = \Big(\sum_{1 \leq j \leq N} |x_j|^2\Big)^{\frac{1}{2}} < \infty \Big\}.$$ When $N = \infty$, this is the familiar sequence space $\ell^2$. Finally, observe that 
\begin{equation}\label{kk*8S8s8s}
\bigoplus_{1 \leq j \leq N} M_{\xi} \cong \M_{\xi}|_{\mathscr{L}^2(m, \ell^{2}_{N})}.
\end{equation}
 As a consequence, using the discussion from \S \ref{section3}, note that 
\begin{equation}\label{MphisihibibC}
\mathscr{C}_{c}(M_{\psi}) \not = \varnothing.
\end{equation}
We will actually describe $\mathscr{C}_{c}(M_{\psi})$ below. 

From \cite{MPRCOUI} we have the unitary operator
\begin{equation}\label{forW}
  W\colon L^2 \to \bigoplus_{1 \leq j \leq N}L^2, \quad Wf=[f_j]_{1 \leq j \leq N}^t,
\end{equation}
where 
$$f=\sum\limits_{1 \leq j \leq N}h_j \cdot  (f_j \circ \psi)$$ is a unique decomposition given by \cite[Lemma 7.3]{MPRCOUI}. Note that 
\begin{equation}\label{wwYYWYYWwww}
f_j=\sum\limits_{m=-\infty}^{\infty}a_{mj}\xi^m
\end{equation} and the coefficients $a_{mj}$ arise from the decomposition from \eqref{l2sum} which yields the unique decomposition
\begin{equation}\label{zzzxxZZXX}
f=\sum\limits_{1 \leq j \leq N}h_j\sum\limits_{m=-\infty}^{\infty}a_{mj}\psi^m.
\end{equation}

Also recall from  \cite[Thm.~7.3]{MPRCOUI} that 
\begin{equation*}
  W^*[k_j]_{1 \leq j \leq N}^t=\sum\limits_{1 \leq j \leq N}h_j \cdot  (k_j\circ \psi).
\end{equation*}

Let $J$ and $J^\#$  denote the standard conjugations on $L^2$ defined by  
$$Jf(z)=\overline{f(\xi)} \quad \mbox{and} \quad (J^\# f)(\xi)=f^\#(\xi): = \overline{f(\bar \xi)}.$$
 For our inner function $\psi$, observe that  $\psi^\# = J^\# \psi$ is also inner.
\begin{Proposition}
For an inner function $\psi$ we have the following. 
\begin{enumerate}
  \item $J^{\#}\K_\psi = \K_{\psi^{\#}}$.
  \item If $\{h_j\}_{1 \leq j \leq N}$ is an orthonormal basis for $\K_\psi$ then $\{h_j^{\#}\}_{1 \leq j \leq N}$ is an orthonormal basis for $\K_{\psi^{\#}}$.
\end{enumerate}
\end{Proposition}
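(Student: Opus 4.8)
The plan is to exploit the fact that $J^{\#}$ is a conjugation on $L^2$ that intertwines $M_{\psi}$ with $M_{\psi^{\#}}$, and in particular to use the structural identity $J^{\#} M_{\psi} J^{\#} = M_{\psi^{\#}}$, together with the orthogonal decomposition \eqref{l2sum} for both $\psi$ and $\psi^{\#}$. First I would verify that $J^{\#}$ maps $H^2$ onto $H^2$: since $(J^{\#}f)(\xi) = \overline{f(\bar\xi)}$, the Fourier coefficient $\widehat{J^{\#}f}(n)$ equals $\overline{\widehat f(n)}$ (the reflection $\xi \mapsto \bar\xi$ sends $\xi^n$ to $\xi^{-n}$ and complex conjugation flips the sign back), so $J^{\#}$ preserves the class of functions whose negative Fourier coefficients vanish, hence $J^{\#}H^2 = H^2$. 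Because $J^{\#}$ is an (antilinear) isometry of $L^2$ onto itself preserving $H^2$, and because $\psi^{\#} = J^{\#}\psi$ is inner with $J^{\#}(\psi H^2) = \psi^{\#} H^2$ (using multiplicativity $J^{\#}(\psi g) = (J^{\#}\psi)(J^{\#}g) = \psi^{\#}\, J^{\#}g$, which holds since $J^{\#}$ reverses and conjugates products pointwise), Lemma \ref{nnNDF99} and the fact that an antilinear isometry sends orthogonal complements to orthogonal complements give
\[
J^{\#}\K_{\psi} = J^{\#}\bigl(H^2 \cap (\psi H^2)^{\perp}\bigr) = J^{\#}H^2 \cap \bigl(J^{\#}(\psi H^2)\bigr)^{\perp} = H^2 \cap (\psi^{\#} H^2)^{\perp} = \K_{\psi^{\#}},
\]
which is part (a).

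For part (b), I would simply observe that $J^{\#}$ restricted to $\K_{\psi}$ is an antilinear isometry onto $\K_{\psi^{\#}}$ by part (a), and that an antilinear isometry carries an orthonormal basis to an orthonormal basis: if $\{h_j\}$ is orthonormal in $\K_\psi$, then by \eqref{CCCCCCcccc} (the polarization identity for conjugations) we have $\langle J^{\#}h_i, J^{\#}h_j\rangle = \langle h_j, h_i\rangle = \delta_{ij}$, so $\{h_j^{\#}\}$ is orthonormal; and since $J^{\#}$ is surjective onto $\K_{\psi^{\#}}$ with $(J^{\#})^2 = I$, the span of $\{h_j^{\#}\}$ is $J^{\#}$ applied to the span of $\{h_j\}$, namely all of $\K_{\psi^{\#}}$. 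Hence $\{h_j^{\#}\}_{1 \leq j \leq N}$ is an orthonormal basis for $\K_{\psi^{\#}}$, and in particular $\dim \K_{\psi^{\#}} = N = \dim \K_{\psi}$ (consistent with $\psi$ and $\psi^{\#}$ being finite Blaschke products of the same degree when $N < \infty$).

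The only genuinely non-routine point is the verification that $J^{\#}H^2 = H^2$ and the compatibility $J^{\#}(\psi H^2) = \psi^{\#}H^2$; once these are in hand, everything else is bookkeeping with the already-established properties of conjugations (Lemma \ref{nnNDF99}, identity \eqref{CCCCCCcccc}) and the definition of the model space. I expect the main obstacle to be purely notational — keeping straight the pointwise action $(J^{\#}f)(\xi) = \overline{f(\bar\xi)}$ versus its effect on Fourier coefficients, and making sure the multiplicativity $J^{\#}(fg) = (J^{\#}f)(J^{\#}g)$ is invoked correctly (it does hold, since $\overline{f(\bar\xi)g(\bar\xi)} = \overline{f(\bar\xi)}\,\overline{g(\bar\xi)}$ pointwise). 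No deep estimate or new idea is needed beyond what the excerpt already provides.
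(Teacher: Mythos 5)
Your proof is correct, and for part (a) it takes a different (more self-contained) route than the paper: the paper simply cites \cite[Lemma 4.4]{MR4083641} for the identity $J^{\#}\K_\psi = \K_{\psi^{\#}}$, whereas you prove it directly by checking that $\widehat{J^{\#}f}(n)=\overline{\widehat f(n)}$ (so $J^{\#}H^2=H^2$), that $J^{\#}$ is multiplicative on pointwise products (so $J^{\#}(\psi H^2)=\psi^{\#}H^2$ with $\psi^{\#}$ inner), and that a conjugation carries orthogonal complements to orthogonal complements of images. All three steps are sound; the only small imprecision is your appeal to Lemma \ref{nnNDF99}, which as stated concerns subspaces $\mathcal{M}$ with $C\mathcal{M}\subset\mathcal{M}$ and so does not literally apply to $\psi H^2$ (since $J^{\#}(\psi H^2)=\psi^{\#}H^2\neq\psi H^2$ in general); the fact you actually need, $J^{\#}(\mathcal{M}^{\perp})=(J^{\#}\mathcal{M})^{\perp}$ for a closed subspace $\mathcal{M}$, follows directly from \eqref{CCCCCCcccc} together with surjectivity of $J^{\#}$, so the gap is only in the citation, not the mathematics. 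Part (b) of your argument — orthonormality is preserved by \eqref{CCCCCCcccc} and completeness follows from surjectivity of $J^{\#}\colon\K_\psi\to\K_{\psi^{\#}}$ — is essentially identical to the paper's. Your version buys a reader-independent verification at the cost of a few lines of Fourier bookkeeping; the paper's buys brevity by outsourcing (a) to the earlier reference.
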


\begin{proof}
Part (a) was shown in \cite[Lemma 4.4]{MR4083641} while part  (b) is a consequence of the facts that conjugations preserve  orthonormality (recall \eqref{CCCCCCcccc}).
\end{proof}

Let $W_{\#}$ be  the  unitary operator from \eqref{forW}, where the inner function $\psi$ is replaced by   $\psi^{\#}$ and orthonormal basis and the orthonormal basis $\{h_j\}_{1 \leq j \leq N}$ is replaced by the orthonormal basis $\{h_j^{\#}\}_{1 \leq j \leq N}$, i.e.,
 $$W_{\#}g=[g_j]_{1 \leq j \leq N}^t, \; \mbox{where} \;  g = \sum_{1 \leq j \leq N}h_j^{\#} \cdot (g_j\circ \psi^{\#}).$$

There are the two natural conjugations $\J$ and $\J^{\#}$ on $\bigoplus_{1 \leq j \leq N} L^{2}$ defined for each  $\vec{F} \in \bigoplus_{1 \leq j \leq N} L^{2}$, $\vec{F}= [f_j]^{t}_{1 \leq j \leq N} $, by 
\begin{align*}\J\vec{F}&= [  \bar f_j]^{t}_{1 \leq j \leq N}  =: \bar{{\vec{F}}} \quad \mbox{and} \quad \J^\#\vec{F} =[  f_j^\#]^{t}_{1 \leq j \leq N}=:{\vec{F}}^\#.
\end{align*}

\begin{Proposition}\label{prop1.3}Let $\psi$ be an inner function and $\{h_j\}_{1 \leq j \leq N}$ be an orthonormal basis for $\K_\psi$.
 Then we have the following. 
\begin{enumerate}
\item If ${\displaystyle f=\sum\limits_{1 \leq j \leq N}h_j \cdot  (f_j\circ \psi)}$ then ${\displaystyle f^\#=J^{\#}f=\sum\limits_{1 \leq j \leq N}h_j^{\#}\cdot (f_j^{\#}\circ \psi^{\#})}$.
\item
$W_{\#}J^{\#}W^*=\J^{\#}$.
\end{enumerate}
\end{Proposition}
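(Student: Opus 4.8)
\textbf{Proof proposal for Proposition \ref{prop1.3}.}

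The plan is to verify part (a) by a direct computation tracking how the conjugation $J^{\#}$ acts on the decomposition $f=\sum_{1 \leq j \leq N}h_j \cdot (f_j\circ \psi)$, and then to deduce part (b) essentially for free from part (a). First I would recall the basic functorial identities: for any $g,h \in L^2$ for which the product makes sense, $(gh)^{\#}=g^{\#}h^{\#}$ (since $\overline{g(\bar\xi)h(\bar\xi)}=\overline{g(\bar\xi)}\,\overline{h(\bar\xi)}$), and more generally $(g\circ\psi)^{\#}=g^{\#}\circ\psi^{\#}$, because $\overline{(g\circ\psi)(\bar\xi)}=\overline{g(\psi(\bar\xi))}=\overline{g\big(\overline{\psi^{\#}(\xi)}\big)}=g^{\#}(\psi^{\#}(\xi))$, using $\psi(\bar\xi)=\overline{\psi^{\#}(\xi)}$ which is just the definition \eqref{JJJJJJvvv} of $\psi^{\#}$. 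Applying $J^{\#}$ termwise to the (convergent) sum $f=\sum_{1 \leq j \leq N}h_j\cdot(f_j\circ\psi)$ — which is legitimate since $J^{\#}$ is an isometry and hence continuous — gives
\[
f^{\#}=J^{\#}f=\sum_{1 \leq j \leq N}(h_j)^{\#}\cdot(f_j\circ\psi)^{\#}=\sum_{1 \leq j \leq N}h_j^{\#}\cdot(f_j^{\#}\circ\psi^{\#}),
\]
which is exactly the claim in (a). One small point to address: the preceding Proposition tells us $\{h_j^{\#}\}_{1 \leq j \leq N}$ is an orthonormal basis for $\K_{\psi^{\#}}$, and $\psi^{\#}$ is inner, so the right-hand side is genuinely the canonical decomposition of $f^{\#}$ relative to the data $(\psi^{\#},\{h_j^{\#}\})$; I would note this so that (b) follows cleanly.

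For part (b), I would simply chase definitions. Take $f\in L^2$ with decomposition $f=\sum_{1 \leq j \leq N}h_j\cdot(f_j\circ\psi)$, so that $Wf=[f_j]_{1 \leq j \leq N}^{t}$ by \eqref{forW}. Then $\J^{\#}Wf=[f_j^{\#}]_{1 \leq j \leq N}^{t}$ by the definition of $\J^{\#}$. On the other hand, by part (a), $J^{\#}f=f^{\#}=\sum_{1 \leq j \leq N}h_j^{\#}\cdot(f_j^{\#}\circ\psi^{\#})$, which is precisely the canonical decomposition used to define $W_{\#}$, so $W_{\#}(J^{\#}f)=[f_j^{\#}]_{1 \leq j \leq N}^{t}$. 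Comparing, $W_{\#}J^{\#}f=\J^{\#}Wf$ for all $f\in L^2$, i.e. $W_{\#}J^{\#}=\J^{\#}W$, equivalently $W_{\#}J^{\#}W^{*}=\J^{\#}$ since $W$ is unitary.

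The only genuine obstacle is the identity $(g\circ\psi)^{\#}=g^{\#}\circ\psi^{\#}$ at the level of $L^2$ boundary functions: one must be slightly careful that composition with the inner function $\psi$ is well defined on $L^2(m,\T)$ (via the decomposition \eqref{l2sum}–\eqref{zzzxxZZXX}, not pointwise) and that it intertwines correctly with $J^{\#}$. I expect this to be handled by first checking the identity on the dense set spanned by $\{\xi^m\}$, where $g\circ\psi$ reduces to $\psi^m$ and the relation $\psi^{\#}=J^{\#}\psi$ combined with $J^{\#}(\psi^m)=(\psi^{\#})^m$ makes everything transparent, and then passing to the limit using the boundedness (isometry) of $J^{\#}$ and of the composition operator $g\mapsto g\circ\psi$ on $L^2$. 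Everything else is routine termwise bookkeeping.
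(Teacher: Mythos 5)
Your argument is correct and follows essentially the same route as the paper: part (a) reduces to applying the antilinear isometry $J^{\#}$ termwise to the expansion $f=\sum_j h_j\sum_m a_{mj}\psi^m$ (your ``check on $\{\xi^m\}$ and pass to the limit'' is exactly the paper's computation with the coefficients $a_{mj}$, using $J^{\#}(h_j\psi^m)=h_j^{\#}(\psi^{\#})^m$), and part (b) is the same definition chase through $W$, $W_{\#}$, and $\J^{\#}$, with the needed remark that the resulting sum is the canonical decomposition relative to $(\psi^{\#},\{h_j^{\#}\})$.
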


\begin{proof}
%

Let $f\in L^2$ and observe from \eqref{wwYYWYYWwww} and \eqref{zzzxxZZXX} that  \[J^{\#}f=\sum\limits_{j=1}^{\infty}h_j^{\#}\sum \overline{a_{mj}}(\psi^{\#})^m\quad \text{and}\quad f_j^{\#}=\sum\limits_{m=-\infty}^{\infty}\overline{a_{mj}}\xi^m.\] Hence 
$$J^{\#}f=\sum\limits_{j=1}^{\infty}h_j^{\#} \cdot (f_j^{\#}\circ \psi^{\#}),$$ 
which proves (a). The above also yields  
$$W_{\#}J^{\#}W^*[f_j]_{1 \leq j \leq N}^t=W_{\#}J^{\#}f=[f_j^{\#}]_{1 \leq j \leq N}^t=
\J^{\#}[f_j]_{1 \leq j \leq N}^t,$$
which proves (b).
\end{proof}

\begin{Theorem}\label{089foidjgigifghjhjh77766}
Suppose that $\psi$ is inner and  $\{h_j\}_{1 \leq j \leq N}$ is an orthonormal basis for $\K_{\psi}$. Then we have the following, 
\begin{enumerate}
\item $\mathscr{C}_{c}(M_{\psi}) \not = \varnothing$. 
\item $C \in \mathscr{C}_{c}(M_{\psi})$ if and only if there is a $\Phi = [\phi_{i j}]_{1 \leq i, j \leq N} \in \mathscr{L}^{\infty}(m, \ell^2_{N})$ such that 
\begin{equation}\label{phishishishis}
 \Phi(\xi)=\Phi(\bar \xi) \; \mbox{and} \; \Phi^{*}(\xi) \Phi(\xi) = I
 \end{equation} almost everywhere on $\T$    and
\begin{equation}\label{cfor} C f = \sum_{1 \leq j \leq N} ( f^\#_j \circ \psi) \sum_{1 \leq k \leq N} h_k  \cdot (\phi_{k, j} \circ \psi),\end{equation}
for all ${\displaystyle f = \sum_{1 \leq j \leq N} h_j \cdot (f_j \circ \psi)\in L^2}$.
\end{enumerate}
\end{Theorem}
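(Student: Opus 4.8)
The plan is to transport the problem from $L^2$ to $\bigoplus_{1 \leq j \leq N} L^2$ via the unitary $W$ of \eqref{forW}, invoke the description of $\mathscr{C}_c(\M_\xi)$ on the vector-valued space (Theorem \ref{jjHHbbhGHJKK} together with \eqref{kk*8S8s8s} identifying $\bigoplus_{1 \leq j \leq N} M_\xi$ with $\M_\xi$ on $\mathscr{L}^2(m,\ell^2_N)$), and then conjugate back. Part (a) is already recorded in \eqref{MphisihibibC}, so the work is entirely in proving the equivalence in (b).

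First I would observe that $W M_\psi W^* = \bigoplus_{1 \leq j \leq N} M_\xi$ (this is essentially \cite[Thm.~7.3]{MPRCOUI}, since conjugation by $\psi$ on the $j$-th coordinate becomes multiplication by $\xi$). Hence, by Proposition \ref{bnjiovbhucfgyu}, $C \in \mathscr{C}_c(M_\psi)$ if and only if $\mathfrak{C} := W C W^*$ lies in $\mathscr{C}_c(\bigoplus_{1 \leq j \leq N} M_\xi)$. Now apply Theorem \ref{jjHHbbhGHJKK} with the base conjugation on $\ell^2_N$ taken to be $J_0 \vec{x} = \overline{\vec{x}}$: since $m = m^c$ and $h \equiv 1$, the associated $\J^\#$ on $\mathscr{L}^2(m,\ell^2_N)$ is exactly the coordinatewise map $\vec{F} \mapsto \vec{F}^\#$, which under the identification \eqref{kk*8S8s8s} matches $\J^\#$ on $\bigoplus_{1 \leq j \leq N} L^2$. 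The theorem then says $\mathfrak{C} = \M_{\mathbf{U}} \J^\#$ for some unitary-valued $\mathbf{U} \in \mathscr{L}^\infty(m, \mathcal{B}(\ell^2_N))$ which is $\J^\#$-symmetric. Writing $\mathbf{U}(\xi) = \Phi(\xi) = [\phi_{ij}(\xi)]$ as a matrix of $L^\infty(m)$ functions, the unitarity condition is $\Phi^*(\xi)\Phi(\xi) = I$ a.e., and the $\J^\#$-symmetry condition from Proposition \ref{p3.4}(b) reads $J_0 \mathbf{U}(\xi) J_0 = \mathbf{U}^*(\bar\xi) = \mathbf{U}(\bar\xi)^*$; since $J_0$ is entrywise conjugation and $\mathbf{U}^* = \overline{\mathbf{U}^t} = \overline{\mathbf{U}}$ would only hold for symmetric $\mathbf{U}$, I need to be careful — the correct reading is $J_0 \mathbf{U}(\xi) J_0 = \overline{\Phi(\xi)}$ in matrix terms, and $\mathbf{U}^\#(\xi) = \mathbf{U}^*(\bar\xi)$, so the condition becomes $\overline{\Phi(\xi)} = \Phi(\bar\xi)^*$, i.e.\ $\overline{\phi_{ij}(\xi)} = \overline{\phi_{ji}(\bar\xi)}$, i.e.\ $\phi_{ij}(\xi) = \phi_{ji}(\bar\xi)$ for all $i,j$ a.e. Combined with \eqref{MuuMuu**}-type bookkeeping this is exactly what should collapse to the stated $\Phi(\xi) = \Phi(\bar\xi)$ after also using $\M_{\mathbf U}\J^\# = \J^\#\M_{\mathbf U^*}$ from Proposition \ref{p3.4}(c) — I expect the symmetry in $j$ and the bar-symmetry to combine, via the involutivity $\mathfrak{C}^2 = I$, into precisely the two relations in \eqref{phishishishis}. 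Sorting out this indexing carefully is the one genuinely delicate point.

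Next I would unwind the formula. With $\mathfrak{C} = \M_\Phi \J^\#$ on $\bigoplus_{1 \leq j \leq N} L^2$, an element $f = \sum_{1 \leq j \leq N} h_j \cdot (f_j \circ \psi)$ has $W f = [f_j]^t$, then $\J^\# W f = [f_j^\#]^t$, then $\M_\Phi [f_j^\#]^t = \big[\sum_{1 \leq j \leq N} \phi_{kj} f_j^\#\big]^t_{1 \leq k \leq N}$, and finally $C f = W^* \M_\Phi \J^\# W f = \sum_{1 \leq k \leq N} h_k \cdot \big(( \sum_{1 \leq j \leq N}\phi_{kj} f_j^\#) \circ \psi\big) = \sum_{1 \leq j \leq N} (f_j^\# \circ \psi) \sum_{1 \leq k \leq N} h_k \cdot (\phi_{kj} \circ \psi)$, which is \eqref{cfor}. (Here I use $(gh)\circ\psi = (g\circ\psi)(h\circ\psi)$ and that $W^*[k_j]^t = \sum h_j\cdot(k_j\circ\psi)$.) Conversely, given $\Phi$ satisfying \eqref{phishishishis}, the same computation shows the $C$ of \eqref{cfor} equals $W^*(\M_\Phi \J^\#)W$; Proposition \ref{KHADFKDJSHF}-style reasoning (or directly Proposition \ref{p3.4}(b) run backwards, since $\Phi(\xi)=\Phi(\bar\xi)$ and unitarity give the required $J_0$-symmetry) shows $\M_\Phi\J^\#$ is a conjugation in $\mathscr{C}_c(\M_\xi)$, so $C \in \mathscr{C}_c(M_\psi)$ by Proposition \ref{bnjiovbhucfgyu}.

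The main obstacle, as flagged above, is bookkeeping the two symmetry conditions on $\Phi$: Theorem \ref{jjHHbbhGHJKK} hands me "$\M_{\mathbf U}$ is $\J^\#$-symmetric," which via Proposition \ref{p3.4}(b) expands to an identity involving both matrix transposition (from $\mathbf U^* = \overline{\mathbf U^t}$ and the action of the entrywise-conjugation $J_0$) and the argument flip $\xi \mapsto \bar\xi$; I must verify that this single identity is equivalent to the clean pair "$\Phi(\xi) = \Phi(\bar\xi)$ and $\Phi(\xi)$ unitary" stated in \eqref{phishishishis}, using the involutivity of $C$ to eliminate the transpose. Everything else — the intertwining $WM_\psi W^* = \bigoplus M_\xi$, the matching of the two copies of $\J^\#$, the final substitution producing \eqref{cfor} — is routine given the cited results.
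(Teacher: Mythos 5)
Your route is the same as the paper's: conjugate by the unitary $W$ of \eqref{forW}, use \cite[Thm.~7.2]{MPRCOUI} together with \eqref{kk*8S8s8s} to reduce to $\mathscr{C}_c(\M_\xi)$ on $\mathscr{L}^2(m,\ell^2_N)$, invoke Theorem \ref{jjHHbbhGHJKK} to write $WCW^*=\M_\Phi\J^{\#}$, and unwind with $W^*$; your final computation producing \eqref{cfor} is exactly the paper's. The genuine gap is the point you yourself flagged and postponed, and it cannot be closed in the way you hope. With $J_0$ the entrywise conjugation on $\ell^2_N$, Proposition \ref{p3.4}(b) (equivalently Lemma \ref{csu}) says that $\M_\Phi\J^{\#}$ is a conjugation precisely when $\Phi(\xi)$ is unitary a.e.\ and $J_0\Phi(\xi)J_0=\Phi^{*}(\bar\xi)$, i.e.\ $\phi_{ij}(\xi)=\phi_{ji}(\bar\xi)$ a.e., which is what you computed. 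Appealing once more to involutivity gains nothing: since $\big((\M_\Phi\J^{\#})^2\f\big)(\xi)=\Phi(\xi)\overline{\Phi(\bar\xi)}\,\f(\xi)$, involutivity (given unitarity) is \emph{equivalent} to $\Phi(\bar\xi)=\Phi(\xi)^{t}$, so the transposed condition already exhausts the content of ``$\mathfrak C^2=I$,'' and for $N\geq 2$ it neither implies nor is implied by the stated $\Phi(\xi)=\Phi(\bar\xi)$.

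To see that the two conditions genuinely differ: a constant real orthogonal nonsymmetric $\Phi\equiv V$ (say the rotation $\left[\begin{smallmatrix}0&1\\-1&0\end{smallmatrix}\right]$) satisfies \eqref{phishishishis}, yet $(\M_V\J^{\#})^2=\M_{V\overline V}\neq I$, so your converse direction --- where you claim \eqref{phishishishis} ``gives the required $J_0$-symmetry'' --- fails as written; conversely $\Phi(\xi)=\left[\begin{smallmatrix}0&\xi\\ \bar\xi&0\end{smallmatrix}\right]$ is unitary with $\Phi(\bar\xi)=\Phi(\xi)^{t}$, hence $\M_\Phi\J^{\#}\in\mathscr{C}_c(\M_\xi)$ pulls back to an element of $\mathscr{C}_c(M_\psi)$, although $\Phi(\xi)\neq\Phi(\bar\xi)$. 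Thus the correct symmetry hypothesis is the one you derived, $\phi_{ij}(\xi)=\phi_{ji}(\bar\xi)$ (it collapses to \eqref{phishishishis} when $N=1$, as in Example \ref{sdjkfksdlfkHHGGHVGBVGHBGH}, or when $\Phi$ is pointwise symmetric); the paper's own proof passes over this step by asserting that $\J^{\#}$-symmetry ``gives $\Phi(\xi)=\Phi(\bar\xi)$.'' As it stands, then, your proposal leaves the crucial equivalence in (b) unestablished, and the fix is not more bookkeeping with involutivity but carrying the transposed condition through your (otherwise correct) unwinding.
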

\begin{proof}
Statement (a) follows from \eqref{MphisihibibC}. To prove (b), observe that since $W$ is a unitary operator, then $\widetilde C:=WCW^*$ is a conjugation on  $\bigoplus_{1 \leq j \leq N} L^2$ (Lemma \ref{lem1.2}). If $C M_{\psi}  = M_{{\psi}}C$, it follows from 
\cite[Theorem 7.2(c)]{MPRCOUI} that 
 $$\widetilde C\Big( \bigoplus_{1 \leq j \leq N}M_{\xi}\Big) = \Big(\bigoplus_{1 \leq j \leq N}M_{\xi}\Big)\widetilde C .$$
Since the operator $\bigoplus_{1 \leq j \leq N}M_{\xi}$ on $\bigoplus_{j\geq 1}L^2$ is unitary equivalent to $\M_{\xi}$ on $\mathscr{L}^2(m,\ell^2_N)$ (recall \eqref{kk*8S8s8s}), Theorem \ref{jjHHbbhGHJKK} says there is a $\Phi = [\phi_{i j}]_{1 \leq i, j \leq N} \in \mathscr{L}^{\infty}(m, \ell^2_{N})$ such that $\Phi(\xi)$ is unitary for almost every $\xi \in \T$, 
$\M_{\Phi}$ is $\J^\#$--symmetric, and 
$$\widetilde C=\M_\Phi \J^\#.$$ The unitary property gives  $\Phi^*(\xi)\Phi(\xi)=I$ and the $\J^\#$--symmetry property gives $\Phi(\xi)=\Phi(\bar \xi)$ almost everywhere on $\T$. So far, we have shown that if $C$ is a conjugation which commutes with $M_{\psi}$, then $W C W^{*} = \M_{\Phi} \J^{\#}$, where $\Phi$ satisfies the two properties from \eqref{phishishishis}. Conversely suppose that $\Phi = [\phi_{i j}]_{1 \leq i, j \leq N} \in \mathscr{L}^{\infty}(m, \ell^2_{N})$ satisfies the two conditions from \eqref{phishishishis}. The second condition will show that $\J^{\#} \M_{\Phi} \J^{\#} = \M_{\Phi}^{*}$ and combining this with the first condition will show that $\Phi$ is unitary valued almost everywhere. The second property, along with Proposition \ref{p3.4} will show that $\M_{\Phi} \J^{\#}$ is a conjugation and belongs to $\mathscr{C}_{c}(\M_{\xi})$. By the discussion above, this says that $W^{*} (\M_{\Phi} \J^{\#}) W \in \mathscr{C}_{c}(M_{\psi})$. 

Applying  Proposition \ref{prop1.3} we can verify the formula \eqref{cfor}. Indeed, for each 
$$f=\sum\limits_{1 \leq j \leq N}h_j \cdot (f_j\circ \psi) \in L^2$$ we have  
\begin{align*}
C f & = W^{*} {\M}_{\Phi} \J^{\#} W f\\
 & = W^{*} {\M}_{\Phi} W_\# J^{\#} \Big(\sum\limits_{1 \leq j \leq N}h_j \cdot (f_j\circ \psi)\Big)\\
 & = W^{*} {\M}_{\Phi} W_\#\Big(\sum\limits_{1 \leq j \leq N}h_j^{\#} \cdot (f_j^{\#}\circ \psi^{\#})\Big)\\
& = W^{*} [\phi_{ij} ]_{1 \leq i, j \leq N}[ f^\#_j]_{1 \leq j \leq N}^{t}\\
& = W^{*} \Big[\sum_{1 \leq i, j \leq N} \phi_{1j}  f^\#_j, \sum_{1 \leq i, j \leq N} \phi_{2j}  f^\#_j, \sum_{1 \leq i, j \leq N} \phi_{3j}  f^\#_j, \ldots\Big]^{t}\\
& = h_1\cdot \Big(\sum_{1 \leq i, j \leq N} \phi_{1j}  f^\#_j\Big)\circ \psi + h_2 \cdot \Big(\sum_{1 \leq i, j \leq N} \phi_{2j}  f^\#_j\Big) \circ \psi + \ldots\\
&=  (f^\#_{1} \circ \psi) \cdot  \Big(\sum_{1 \leq i, j \leq N} h_{j} (\phi_{j1} \circ \psi\Big) +  (f^\#_{2} \circ \psi) \cdot  \Big(\sum_{1 \leq i, j \leq N} h_{j} (\phi_{j2} \circ \psi\Big) + \cdots
\end{align*}
and this completes the proof. 
\end{proof}

\begin{Example}\label{sdjkfksdlfkHHGGHVGBVGHBGH}
Consider the inner function $\psi(z) = z$. Here the associated unitary operator $M_{\psi}$ is merely {\em the} bilateral shift $M_{\xi}$ on $L^2$. In this case, $\K_{\psi} = \C$ (the constant functions). Moreover, $\psi^{\#}(z) = z$ and the expansions from Proposition \ref{prop1.3} are the standard Fourier expansions of an $f \in L^2$. Theorem \ref{089foidjgigifghjhjh77766} says that any $C \in \mathscr{C}_{c}(M_{\xi})$ takes the form $$(C f)(\xi) = u(\xi) \overline{f(\bar{\xi})}$$ for some $u \in L^{\infty}$ that is unimodular   and satisfies $u(\xi) = u(\bar \xi)$ almost everywhere on $\T$. 
\end{Example}

\begin{Example} Consider the inner function 
 $\psi(z) = z^2$ as in \cite[Example 7.7]{MPRCOUI}. Then $\K_{\psi} = \operatorname{span} \{1, z\} = \{h_1, h_2\}$. Furthermore, using the notation from this section, 
$$f(\xi) = h_{1}(\xi) f_{1}(\xi^2) + h_{2}(\xi) f_{2}(\xi^2) = f_{1}(\xi^2) + \xi f_{2}(\xi^2),$$ where
$$f_1(\xi) = \sum_{j = -\infty}^\infty \widehat{f}(2 j) \xi^j \quad \mbox{and} \quad f_{2}(\xi) = \sum^\infty_{j = -\infty} \widehat{f}(2 j + 1) \xi^j.$$ From here, one can check
(Theorem \ref{089foidjgigifghjhjh77766})  that every $C \in \mathscr{C}_s(M_{\xi^2})$ takes the form
$$(C f)(\xi) = { f_{1}^\#(\xi^2)} (\phi_{11}(\xi^2) + \xi \phi_{21}(\xi^2)) + { f_{2}^\#(\xi^2)} (\phi_{12}(\xi^2) + \xi \phi_{22}(\xi^2)),$$
where $\phi_{ij}$ are bounded measurable functions on $\T$ for which
\begin{equation}\label{ex87} \begin{bmatrix}
\overline{ \phi_{11}(\xi)} & \overline{ \phi_{21}(\xi)}\\
\overline{ \phi_{12}(\xi)} & \overline{ \phi_{22}(\xi)}
\end{bmatrix} \begin{bmatrix}
\phi_{11}(\xi) & \phi_{12}(\xi)\\
\phi_{21}(\xi) & \phi_{22}(\xi)
\end{bmatrix}=
\begin{bmatrix}
1 & 0\\
0 & 1
\end{bmatrix}.\end{equation}%
and $\varphi_{ij}(\bar \xi)=\varphi_{ij}(\xi)$, $i,j=1,2$,
for almost every $\xi \in \T$. Condition \eqref{ex87} is equivalent to the conditions 
$$|\phi_{11}(\xi)|^2+|\phi_{21}(\xi)|^2= 1,$$
$$|\phi_{12}(\xi)|^2+|\phi_{22}(\xi)|^2= 1,$$
$$\overline{ \phi_{11}(\xi)}\phi_{12}(\xi)+\overline{\phi_{21}(\xi)}\phi_{22}(\xi)=0.$$
Fix the convention that 
$t=\operatorname{Arg}(\xi)\in (-\pi,\pi]$ and that $s(t),\alpha(t),\beta(t),\gamma(t),\delta(t)$ are any $2\pi$--periodic real-valued  bounded measurable functions. Considering the moduli of the functions above, we obtain 
$$0 \leq s(t) \leq 1,$$
$$\phi_{11}(\xi)=e^{i\alpha(t)}s(t),$$
$$\phi_{12}=e^{i\beta(t)}\sqrt{1-s^2(t)},$$
$$\phi_{21}(\xi)=e^{i\gamma(t)}\sqrt{1-s^2(t)},$$
$$\phi_{22}(\xi)=e^{i\delta(t)}s(t).$$
As to the arguments of the functions above, we obtain 
$$\delta(t)=\beta(t))+\gamma(t)-\alpha(t)-\pi.$$ Incorporating the conditions $\varphi_{ij}(\bar \xi)=\varphi_{ij}(\xi)$, $i,j=1,2$,  we  obtain 
\begin{equation}\label{ex872} \begin{bmatrix}
\phi_{11}(\xi) & \phi_{12}(\xi)\\
\phi_{21}(\xi) & \phi_{22}(\xi)
\end{bmatrix}= \begin{bmatrix}
e^{i\alpha(|t|)}s(|t|) & e^{i\beta(|t|)}\sqrt{1-s^2(|t|)}\\
e^{i\gamma(|t|)}\sqrt{1-s^2(|t|)} & -e^{i(\beta(|t|)+\gamma(|t|)-\alpha(|t|))}s(|t|)
\end{bmatrix}.\end{equation}%
 Finally,  every conjugation $C \in \mathscr{C}_s(M_{\xi^2})$ must take the form
\begin{multline*}
(C f)(\xi) = { f^\#_{1}(\xi^2)}\Big( e^{i\alpha(2|t|)}s(2|t|) + \xi e^{{i}(\gamma(2|t|))}\sqrt{1-s^2(2|t|)}\Big)\\ + { f^\#_{2}(\xi^2)}\Big( e^{{i}\beta(2|t|)}\sqrt{1-s^2(2|t|)} -\xi e^{i(\beta(2|t|)+\gamma(2|t|)-\alpha(2|t|))}s(2|t|)\Big),\end{multline*}
where $t=\operatorname{Arg}(\xi)\in (-\pi,\pi]$ and $s(t),\alpha(t),\beta(t), \gamma(t)$ are any $2\pi$--periodic real  bounded measurable functions.
\end{Example}
\begin{Example}
As a specific nontrivial example of a $C \in \mathscr{C}_{c}(M_{\xi^2})$ we can take
$$(C f)(\xi) = { f^\#_{1}(\xi^2)} \big(\sin(2|t|) + \xi \cos(2t)\big) + { f^\#_{2}(\xi^2)}\big(\cos(2t) - \xi \sin(2|t|)\big),$$
where $t=\operatorname{Arg}(\xi)$ and $s(t)= \sin(t),\alpha(t)\equiv 0,\beta(t)\equiv 0,\gamma(t)\equiv 0$.
For another nontrivial example of a $C \in \mathscr{C}_{c}(M_{\xi^2})$, we set $s(t)\equiv s\in [0,1],\alpha(t)=\lambda t, \lambda\in\R,\beta(t)\equiv 0,\gamma\equiv 0$ to get
$$(C f)(\xi) = { f^\#_{1}(\xi^2)} \big(s e^{i\lambda |t|}+ \xi \sqrt{1-s^2}\big) + { f^\#_{2}(\xi^2)}\big(\sqrt{1-s^2} - \xi e^{i\lambda |t|})\big),$$
where $t=\operatorname{Arg}(\xi)$.

\end{Example}




\section{Conjugations via the spectral theorem}\label{spectraltheoremre}

In this section we use the multiplicity theory for unitary operators \cite{ConwayFA, MR0045309} to describe $\mathscr{C}_{c}(U)$. We also prove that $\mathscr{C}_{c}(U) \not = \varnothing$ if and only if $U \cong U^{*}$ (thus establishing the converse to Lemma \ref{uustar}). We begin with a statement of the spectral multiplicity theory from \cite[p.~ 307, Ch. IX, Theorem 10.20]{ConwayFA}. 

\begin{Theorem}[Spectral Theorem]\label{spectraltheorem}
For a unitary operator $U$ on $\h$, there are mutually singular measures
$\mu_{\infty}, \mu_1, \mu_2, \ldots \in M_{+}(\T)$, along with Hilbert spaces $\h_{\infty}, \h_{1}, \h_2, \ldots$ each with corresponding  $\operatorname{dim} \h_{k} = k$, $k = \infty, 1, 2, 3, \dots$, along with  an isometric  isomorphism
$$\mathcal{I}: \h \to \LL^2_{\h}: =  \mathscr{L}^2(\mu_\infty,\h_\infty)\oplus \mathscr{L}^2(\mu_1,\h_1)\oplus  \mathscr{L}^2(\mu_2,\h_2)\oplus \cdots$$ such that
$\mathcal{I}U \mathcal{I}^{*} $ is equal to the unitary operator 
\begin{equation}\label{MMMMmmmmm}
\M_{\xi}^{(\infty)} \oplus \M_{\xi}^{(1)} \oplus \M_{\xi}^{(2)} \oplus \cdots,
\end{equation}
where for $i = \infty, 1, 2, 3, \ldots$,
$$\M_{\xi}^{(i)}: \mathscr{L}^2(\mu_{i}, \h_{i}) \to  \mathscr{L}^2(\mu_{i}, \h_{i}), \quad (\M_{\xi}^{(i)} \f)(\xi) = \xi \f(\xi).$$
\end{Theorem}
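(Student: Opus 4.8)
This statement is the classical Hahn--Hellinger multiplicity theorem, so the shortest legitimate route is simply to quote \cite[Ch.~IX, Thm.~10.20]{ConwayFA}. If one instead wants a self-contained derivation from the projection-valued form \eqref{9sectraloth} and the elementary-measure machinery set up above, here is the plan. \textbf{First}, record the cyclic model: for $\vec x\in\h$ the cyclic subspace $\mathcal Z(\vec x):=\overline{\operatorname{span}}\{U^n\vec x:n\in\Z\}$ reduces $U$, and the map $U^n\vec x\mapsto\xi^n$ extends to a unitary $\mathcal Z(\vec x)\to L^2(\mu_{\vec x})$ carrying $U|_{\mathcal Z(\vec x)}$ onto $M_\xi$; conversely $M_\xi$ on $L^2(\mu)$ is cyclic with scalar spectral measure $\mu$. \textbf{Second}, a Zorn's lemma argument produces a maximal family $\{\vec x_j\}$ of nonzero vectors with pairwise orthogonal cyclic subspaces, and maximality forces $\h=\bigoplus_j\mathcal Z(\vec x_j)$ (any nonzero vector in the complement, itself a reducing subspace, would spawn a new orthogonal cyclic piece); since $\h$ is separable the index set is countable, so $U\cong\bigoplus_j M_\xi$ on $\bigoplus_j L^2(\mu_j)$ with $\mu_j:=\mu_{\vec x_j}$.

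The heart of the proof is \textbf{reorganizing} this list of cyclic pieces into the canonical shape. Here I would use the Boolean operations $\vee,\wedge$ recalled above: a pair $L^2(\mu)\oplus L^2(\nu)$ can be replaced, up to a unitary equivalence respecting the bilateral-shift structure, by $L^2(\mu\vee\nu)\oplus L^2(\mu\wedge\nu)$ --- split $\T$ into the Borel set on which $\mu$ dominates $\nu$ and its complement, restrict both summands accordingly, and recombine, using the criterion $\h_{\nu_1}\subseteq\h_{\nu_2}\iff\nu_1\wedge\mu\ll\nu_2\wedge\mu$ to check that the recombined pieces are again cyclic with the asserted scalar measures. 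Iterating this surgery pushes ``mass'' to the front and arranges the cyclic summands into a decreasing chain of mutual-absolute-continuity classes $[\lambda_1]\gg[\lambda_2]\gg\cdots$. A Lebesgue-decomposition / Radon--Nikodym bookkeeping on $\T$ then replaces the chain by mutually singular representatives: let $\mu_\infty$ carry the part of $\lambda_1$ that is absolutely continuous with respect to every $\lambda_n$, and let $\mu_k$ carry the part living under $\lambda_k$ but not under $\lambda_{k+1}$. Grouping the cyclic summands along this partition of $\T$ and absorbing the multiplicity into fixed Hilbert spaces $\h_k$ with $\dim\h_k=k$ yields the asserted identification of $U$ with $\M_\xi^{(\infty)}\oplus\M_\xi^{(1)}\oplus\M_\xi^{(2)}\oplus\cdots$ on $\LL^2_{\h}$.

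\textbf{Finally}, uniqueness: the classes $[\mu_k]$ are unitary invariants. The join $[\mu_\infty\vee\mu_1\vee\mu_2\vee\cdots]$ is just the class of any scalar spectral measure $\nu$ for $U$, and for $\nu$-almost every $\xi$ the local multiplicity (the number of $\mu_k$ whose support contains $\xi$, with the $\infty$-piece counted appropriately) coincides with dimension-type invariants of the form $\dim E(\Omega)\h$, which are preserved by any unitary intertwining two such representations; this forces agreement of the models up to the natural isomorphisms, so the data $\mu_\infty,\mu_1,\mu_2,\dots$ are determined up to mutual absolute continuity.

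I expect the main obstacle to be the middle paragraph: making the $\vee/\wedge$ surgery precise (that it genuinely produces \emph{cyclic} summands with exactly the stated scalar measures), checking that iterating it terminates in an honest countable decreasing chain rather than merely an inductively built one, and then carrying out the disjointification into mutually singular $\mu_k$ with the correct multiplicities. Separability of $\h$ is what keeps every index set countable and is used throughout.
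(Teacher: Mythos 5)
Your proposal matches the paper exactly: the paper offers no proof of this statement at all, simply quoting it as the spectral multiplicity theorem from \cite[Ch.~IX, Thm.~10.20]{ConwayFA}, which is precisely your recommended route. Your optional self-contained sketch (cyclic decomposition via Zorn's lemma, reorganization with $\vee/\wedge$ into a decreasing chain of measure classes, disjointification into mutually singular $\mu_k$) is a faithful outline of the standard Hahn--Hellinger argument, with the technical gaps you flag yourself being exactly the ones the cited reference fills.
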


\begin{Remark}\label{r5.2} Let $U$ be a unitary operator with a spectral measure $E(\cdot)$. As previously observed in Proposition \ref{4}(a), $E^c(\cdot)$ is a spectral measure for $U^*$. In \cite[Theorem 8.1]{MPRCOUI}, the measures $\mu_\infty,\mu_1,\mu_2,\dots$ from Theorem \ref{spectraltheorem} were constructed using the spectral measure $E(\cdot)$. Therefore, the appropriate measures for operator $U^*$ are $\mu^c_\infty,\mu^c_1,\mu^c_2,\dots$.
\end{Remark}

We now consider unitary operators with  commuting conjugations. Recall from Proposition \ref{p3.1} that not all unitary operators have commuting conjugations. In the next result, we re-emphasize this observation in terms of the multiplicity theory from Theorem \ref{spectraltheorem}.

\begin{Theorem}\label{th1.2b}
Let $U$  be a unitary operator on $\h$ with the multiplicity representation of $U$ given by the  mutually singular measures $\mu_\infty,\mu_1,\mu_2,\dots$ as in Theorem \ref{spectraltheorem}. 
If $\mathscr{C}_{c}(U) \not = \varnothing$, then $\mu_i^c\ll\mu_i$ for all $i=\infty,1,2,\dots$.
\end{Theorem}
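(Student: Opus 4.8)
The plan is to transfer a hypothetical commuting conjugation on $\h$ through the spectral isomorphism $\mathcal{I}$ of Theorem \ref{spectraltheorem} and then show that each uniform-multiplicity summand $\mathscr{L}^2(\mu_i,\h_i)$ must be preserved by the resulting conjugation, at which point Proposition \ref{p3.1} (applied to the scalar measure $\mu_i$, after reducing to the scalar case or using the Remark following Proposition \ref{p3.1}) forces $\mu_i^c \ll \mu_i$. Concretely, suppose $C \in \mathscr{C}_{c}(U)$ is nonempty and set $\widetilde{C} := \mathcal{I} C \mathcal{I}^{*}$, which by Lemma \ref{lem1.2} is a conjugation on $\LL^2_{\h}$, and by Proposition \ref{bnjiovbhucfgyu} satisfies $\widetilde{C}\,(\M_\xi^{(\infty)}\oplus\M_\xi^{(1)}\oplus\cdots)\,\widetilde{C} = \M_\xi^{(\infty)}\oplus\M_\xi^{(1)}\oplus\cdots$.

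The key step is to show that $\widetilde{C}$ decomposes as a direct sum of conjugations on the individual summands $\mathscr{L}^2(\mu_i,\h_i)$. For this I would invoke the machinery from \S\ref{basicfactsdsd}: the summand $\mathscr{L}^2(\mu_i,\h_i)$ is precisely the spectral subspace $\h_{\nu_i}$ associated with (a scalar spectral measure equivalent to) $\mu_i$ for the operator $U$, and the mutual singularity of the $\mu_i$ together with the fact that each $\mu_i$ is equivalent to its own pushforward under complex conjugation would need to be checked. Here one must be a bit careful: a priori $C$ could map $\h_{\mu_i}$ to $\h_{\mu_i^c}$ (this is exactly Theorem \ref{conj_dec}). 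However, by Remark \ref{r5.2} the multiplicity measures for $U^{*}$ are $\mu_\infty^c,\mu_1^c,\dots$, and since $C U C = U$ forces $U \cong U^{*}$ via an intertwiner built from $C$, the uniqueness of the multiplicity measures (up to mutual absolute continuity) yields $\mu_i \sim \mu_i^c$ for each fixed multiplicity $i$ — in particular $\mu_i^c \ll \mu_i$. Alternatively, and more in the spirit of the local arguments already in the paper, one applies Corollary \ref{conj_dec1} (or Corollary \ref{uuUUSSSS}): since $C$ must respect the uniform-multiplicity decomposition (the multiplicity function is a unitary invariant shared by $U$ and $U^{*}$, and $C$ conjugates $U$ to $U$), one gets $C\h_{\mu_i} \subset \h_{\mu_i}$, and then Corollary \ref{conj_dec1} gives $\mu_i^c \wedge \nu \ll \mu_i \wedge \nu$ for a scalar spectral measure $\nu$ of $U$; since $\mu_i \ll \nu$ this simplifies to $\mu_i^c \ll \mu_i$.

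I expect the main obstacle to be the bookkeeping that justifies $C\h_{\mu_i}\subset\h_{\mu_i}$ — that is, verifying that a commuting conjugation genuinely preserves each uniform-multiplicity summand rather than merely permuting summands of equal multiplicity or swapping $\h_{\mu_i}$ with $\h_{\mu_i^c}$. The cleanest route is probably the following: $\h_{\mu_i}$ is the maximal reducing subspace on which $U$ has uniform multiplicity $i$ and whose scalar spectral measure is $\mu_i$; since $C U C = U$, the subspace $C\h_{\mu_i}$ is again reducing for $U$ with uniform multiplicity $i$ (conjugations preserve dimension by \eqref{CCCCCCcccc}), and by Theorem \ref{conj_dec} its scalar spectral measure is $\mu_i^c$. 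By mutual singularity of the $\mu_j$ and the fact that $\mathscr{C}_{c}(U) \neq \varnothing \Rightarrow U \cong U^{*}$ (Lemma \ref{uustar}) combined with Remark \ref{r5.2}, we must have $\mu_i^c \sim \mu_i$ — otherwise the multiplicity function of $U$ and of $U^{*}$ would disagree, contradicting $U \cong U^{*}$. This gives $\mu_i^c \ll \mu_i$ directly, which is the claim, and as a byproduct $C\h_{\mu_i} = \h_{\mu_i^c} = \h_{\mu_i}$.
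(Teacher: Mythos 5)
Your ``cleanest route'' at the end --- $\mathscr{C}_{c}(U)\neq\varnothing$ forces $U\cong U^{*}$ by Lemma \ref{uustar}, Remark \ref{r5.2} identifies the multiplicity measures of $U^{*}$ as $\mu_i^c$, and the uniqueness (up to mutual absolute continuity) of the multiplicity measures then gives $\mu_i^c\sim\mu_i$ --- is exactly the paper's proof, so the proposal is correct and takes essentially the same approach. The extra scaffolding (transferring $C$ through $\mathcal{I}$, Proposition \ref{p3.1}, the local route via Corollary \ref{conj_dec1}) is not needed, and as you yourself observe, that local route alone would be circular without the multiplicity-uniqueness argument you ultimately rely on.
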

\begin{proof}
Lemma \ref{uustar} says that if $\mathscr{C}_{c}(U) \not = \varnothing$, then  $U \cong U^{*}$. Hence, by Remark \ref{r5.2} and \cite[p.~305, Theorem IX 10.16]{ConwayFA}, the measures $\mu_i$ and $\mu^c_i$ are mutually absolutely continuous for  all $i=\infty,1,2,\dots$.
\end{proof}

We now arrive at the description of $\mathscr{C}_{c}(U)$ in terms of the parameters of the spectral theorem. 

\begin{Theorem}\label{th1.2a}
Let $U$ be a unitary operator and $C$ be a conjugation on $\h$. 
With the notation as in Theorem \ref{spectraltheorem}, assuming that $\mu_i^c\ll\mu_i$ for $i=\infty,1,2,\dots$, the following are equivalent
\begin{enumerate}
  \item $C \in \mathscr{C}_{c}(U)$;
      \item  For each $i=\infty,1,2,\dots$, there are conjugations $\CC^{i}\in \mathscr{A}\! \mathcal{B}((\mathscr{L}^{2}(\mu_i,\h_i))$ such that $\M_\xi^{(i)}$ is $\CC^{i}$--commuting and 
      \begin{equation}\label{eqm1}
        C={\II}^{*}\Big(\bigoplus \CC^i\Big)\II;
      \end{equation}%
      \item  For each $i=\infty,1,2,\dots$ and  any conjugation $J^{(i)}$ on $\h_i$, there is a unitary operator valued function $\U^{(i)}\in \mathscr{L}^{\infty}(\mu_i,\mathcal{B}(\h_i))$ such that 
      $$J^{(i)}\U^{(i)}(\xi)J^{(i)}=\U^{(i)}(\xi)^{\#}$$ for $\mu_i$ almost every $\xi \in \T$ and
      \begin{equation}\label{eqm3}
        C={\II}^{*}\Big(\bigoplus\U^{(i)}{\J^{\#}}^{(i)}\Big)\II={\II}^{*}\Big(\bigoplus\U^{(i)}\Big)\Big(\bigoplus {\J^{\#}}^{(i)}\Big)\II.
      \end{equation}
\end{enumerate}
\end{Theorem}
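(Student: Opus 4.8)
The plan is to prove the three equivalences by a cyclic chain of implications, leveraging the spectral decomposition from Theorem \ref{spectraltheorem} to reduce everything to the vector-valued multiplication operators $\M_\xi^{(i)}$, where the work has already been done in \S\ref{section3}. First I would establish the passage from $U$ to its spectral model: by Theorem \ref{spectraltheorem} the isomorphism $\II$ conjugates $U$ onto $\bigoplus_i \M_\xi^{(i)}$ on $\LL^2_\h$, so by Proposition \ref{bnjiovbhucfgyu} (stability under unitary equivalence) we have $C \in \mathscr{C}_c(U)$ if and only if $\II C \II^* \in \mathscr{C}_c(\bigoplus_i \M_\xi^{(i)})$. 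Thus the entire statement reduces to describing the commuting conjugations of a direct sum of the operators $\M_\xi^{(i)}$.

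The crux is then the reduction of a commuting conjugation of $\bigoplus_i \M_\xi^{(i)}$ to a direct sum $\bigoplus_i \CC^i$ with each $\CC^i \in \mathscr{C}_c(\M_\xi^{(i)})$. This is exactly the kind of splitting handled by Corollary \ref{uuUUSSSS} and Theorem \ref{conj_dec}: the spaces $\mathscr{L}^2(\mu_i,\h_i)$ are the spectral subspaces $\h_{\mu_i}$ for mutually singular measures $\mu_i$, and since the $\mu_i$ are mutually singular one checks $\mu_i^c \wedge \nu \ll \mu_i \wedge \nu$ (using the hypothesis $\mu_i^c \ll \mu_i$ together with mutual singularity, so that the $\mu_i$-component of $\mu_j^c$ vanishes for $i \neq j$). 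Corollary \ref{uuUUSSSS} then gives $C\h_{\mu_i} = \h_{\mu_i}$ and $C$ restricts to a conjugation $\CC^i \in \mathscr{C}_c(\M_\xi^{(i)})$ on each summand, yielding \eqref{eqm1}. The converse — that a direct sum of such $\CC^i$ commutes with $\bigoplus_i \M_\xi^{(i)}$ — is immediate from the block structure. This establishes (a) $\Leftrightarrow$ (b).

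For (b) $\Leftrightarrow$ (c), I would invoke the analysis of $\mathscr{C}_c(\M_\xi)$ on $\mathscr{L}^2(\mu,\h)$ from \S\ref{section3}. Fix a conjugation $J^{(i)}$ on $\h_i$ (such a conjugation exists on every Hilbert space, e.g.\ via an orthonormal basis as in Example \ref{exkjsdf77y}(a)), and form the associated $\J^{\#}{}^{(i)}$ on $\mathscr{L}^2(\mu_i,\h_i)$ via \eqref{e1.7}; Proposition \ref{p3.2a} guarantees this is a conjugation in $\mathscr{C}_c(\M_\xi^{(i)})$. Given any $\CC^i \in \mathscr{C}_c(\M_\xi^{(i)})$, the composition $\CC^i \,\J^{\#}{}^{(i)}$ is a unitary operator on $\mathscr{L}^2(\mu_i,\h_i)$ commuting with $\M_\xi^{(i)}$, hence equals $\M_{\U^{(i)}}$ for some unitary-valued $\U^{(i)} \in \mathscr{L}^\infty(\mu_i,\mathcal{B}(\h_i))$ (the standard description of the commutant of the multiplication operator); then $\CC^i = \M_{\U^{(i)}}\J^{\#}{}^{(i)}$, and Proposition \ref{p3.4}(b) translates the conjugation condition into $J^{(i)}\U^{(i)}(\xi)J^{(i)} = \U^{(i)}(\xi)^{\#}$ $\mu_i$-a.e. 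Conversely, Proposition \ref{p3.4}(b),(d) shows that any $\U^{(i)}$ with this symmetry yields $\M_{\U^{(i)}}\J^{\#}{}^{(i)} \in \mathscr{C}_c(\M_\xi^{(i)})$. Assembling over $i$ and transporting back by $\II^*$ gives \eqref{eqm3}, with the second equality there just being $\bigoplus(\U^{(i)}\J^{\#}{}^{(i)}) = (\bigoplus \U^{(i)})(\bigoplus \J^{\#}{}^{(i)})$.

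The main obstacle I anticipate is the bookkeeping in the splitting step (a) $\Leftrightarrow$ (b): one must carefully verify that the hypotheses of Corollary \ref{uuUUSSSS} are met for each $\mu_i$, which requires checking $\mu_i^c \wedge \nu \ll \mu_i \wedge \nu$ for a scalar spectral measure $\nu$ of $U$. Since $\nu$ can be taken as (a weighted sum equivalent to) $\mu_\infty + \mu_1 + \mu_2 + \cdots$ and the $\mu_i$ are mutually singular, the hypothesis $\mu_i^c \ll \mu_i$ does give $\mu_i^c \wedge \nu \ll \mu_i \ll \mu_i \wedge \nu$; one should also confirm via Theorem \ref{conj_dec} that $C$ maps $\h_{\mu_i}$ into $\h_{\mu_i^c}$, and that $\mu_i^c \ll \mu_i$ forces $\h_{\mu_i^c} \subseteq \h_{\mu_i}$, so the off-diagonal blocks of $C$ relative to the decomposition $\bigoplus_i \mathscr{L}^2(\mu_i,\h_i)$ vanish. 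Everything else is a routine application of the results already assembled in \S\ref{section3}.
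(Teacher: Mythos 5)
Your proposal is correct, and its core engine is the same as the paper's: compose the (transported) conjugation with the model conjugation $\J^{\#}{}^{(i)}$ of Proposition \ref{p3.2a} to get a \emph{linear} unitary commuting with multiplication by $\xi$, identify it with an $\M_{\U}$ via the commutant description coming from the spectral theorem, and then use Proposition \ref{p3.4} to translate the conjugation property into $J^{(i)}\U^{(i)}(\xi)J^{(i)}=\U^{(i)}(\xi)^{\#}$. Where you differ is in how the block-diagonal splitting over the summands $\mathscr{L}^2(\mu_i,\h_i)$ is obtained, and in the order of implications: you first prove (a) $\Leftrightarrow$ (b) by identifying each $\mathscr{L}^2(\mu_i,\h_i)$ with the spectral subspace $\h_{\mu_i}$ and invoking Theorem \ref{conj_dec} together with Corollary \ref{uuUUSSSS} (checking $\mu_i^c\wedge\nu\ll\mu_i\wedge\nu$ from $\mu_i^c\ll\mu_i$, mutual singularity, and $\nu\sim\sum_i c_i\mu_i$), and only then run the $\J^{\#}$-plus-commutant argument summand by summand. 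The paper instead proves (a) $\Rightarrow$ (c) in one stroke: it applies the commutant theorem to the full operator $\bigoplus_i\M_{\xi}^{(i)}$, so the commutant is automatically of the form $\bigoplus_i\M_{\U^{(i)}}$ (mutual singularity of the $\mu_i$ is absorbed into that theorem), and the splitting into blocks comes for free without any appeal to the $\h_{\mu}$ machinery; (c) $\Rightarrow$ (b) $\Rightarrow$ (a) are then immediate. Your route costs a little extra bookkeeping (verifying $\h_{\mu_i}=\mathscr{L}^2(\mu_i,\h_i)$ and the $\wedge\,\nu$ condition) but makes the reduction to single summands explicit and reuses the decomposition results of the earlier sections, which is a perfectly sound, slightly more modular alternative; the paper's version is shorter because the global commutant statement already encodes the decomposition.
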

\begin{proof}
To show (a) $\Longrightarrow$ (c), 
 let $$\widetilde{\M}_\xi := \mathcal{I}V\mathcal{I}^{*}\in \mathcal{B}(\LL^2_{\h})$$ and define the conjugation $\widetilde{\!\CC}=\mathcal{I}C\mathcal{I}^{*}$ (note the use of Lemma \ref{lem1.2}). Then
\begin{equation}\label{e1.8a}
  \widetilde{\!\CC}\widetilde{\M}_\xi\,\widetilde{\!\CC}=\widetilde{\M}_{ \xi}.
\end{equation}

Let $J^{(i)}$ be any a conjugation on $\h_i$. Since $\mu^c_i\ll\mu_i$ for $i=\infty,1,2,\dots$, let 
$$h_i = \frac{d \mu^{c}_{i}}{d \mu_{i}}$$ and define the map  ${\J^{\#}}^{(i)}$ on $\mathscr{L}^2(\mu_i,\h_i)$ by
\begin{equation*}
  ({\J^{\#}}^{(i)}\f_i)(\xi)=  h_i( \xi)^{\frac 12}\, J^{(i)}(\f_i(\bar \xi))
\end{equation*} 
for $\mu_i$ almost every $\xi \in \T$ and $\f_i\in \mathscr{L}^2(\mu_i,\h_i)$. By Proposition \ref{p3.2a}, each of the above maps defines a conjugation on $\mathscr{L}^2(\mu_i, \h_{i})$ which satisfies
$${\J^{\#}}^{(i)} \M_\xi^{(i)} {\J^{\#}}^{(i)}=\M_\xi^{(i)}.$$
Use these conjugations to define the conjugation $\widetilde{\!\J\,}^{\#}=\bigoplus {\J^{\#}}^{(i)}$ on $\LL^2_{\h}$
and observe  that 
$$ \widetilde{\!\J\,}^{\#}\,\widetilde{\M}_\xi\,\widetilde{\!\J\,}^{\#}=\widetilde{\M}_{ \xi}.$$
Moreover, 
\begin{equation*}
  \widetilde{\M}_\xi\ \widetilde{\!\CC}\ \widetilde{\!\J\,}^{\#}=\widetilde{\!\CC}\ \widetilde{\M}_{\xi}\ \widetilde{\!\J\,}^{\#}=\widetilde{\!\CC}\ \widetilde{\!\J\,}^{\#}\ \widetilde{\M}_\xi.
\end{equation*}
The spectral theorem applied to $\widetilde{\M_{\xi}}$ also yields the commutant  \cite[p.~ 307, Theorem 10.20]{ConwayFA}, namely
 there are 
\begin{equation*}
  \U^{(i)}\in \mathscr{L}^{\infty}(\mu_i,\mathcal{B}(\h_i)), \quad  i=\infty,1,2,\dots,
\end{equation*}
such that
\begin{equation*}
  \widetilde{\!\CC}\, \widetilde{\!\J\,}^{\#}=\bigoplus\widetilde{\M}_{\U^{(i)}}=\widetilde{\M}_{\U^{(\infty)}}\oplus \widetilde{\M}_{\U^{(1)}} \oplus \widetilde{\M}_{\U^{(2)}} \oplus \cdots.
\end{equation*}
Since $\widetilde{\!\CC}\, \widetilde{\!\J\,}^{\#}$ is unitary, it follows that  $\widetilde{\M}_{\U^{(i)}}$ is also unitary and consequently $\U^{(i)}$ is a operator valued operator function such that  $\U^{(i)}(\xi)$ is unitary for $\mu_i$ almost every $\xi \in \T$. Therefore,
\begin{equation*}
  \widetilde{\!\CC}=\Big(\bigoplus \M_{\U^{(i)}}\Big)\Big(\bigoplus {\J^{\#}}^{(i)}\Big)=\bigoplus \M_{\U^{(i)}}{\J^{\#}}^{(i)}.
\end{equation*}
Since $\widetilde{\!\CC}|_{\mathscr{L}^2(\mu_i,\h_i)}$ is a conjugation, it follows that
   $$J^{(i)}\U^{(i)}(\xi)J^{(i)}= (\U^{(i)}( \xi))^\#$$  for $\mu_i$ almost every $\xi \in \T$ (Proposition \ref{p3.4}). This completes the proof of (a) $\Longrightarrow$ (c) .

To prove (c) $\Longrightarrow$ (b), it is enough to take $\CC^{(i)}=\M_{\U^{(i)}}{\J^{\#}}^{(i)}$. The remaining implication (b) $\Longrightarrow$ (a) is trivial.
\end{proof}

The following yields  the converse of Lemma \ref{uustar} and thus completes the criterion as to when $\mathscr{C}_{c}(U) \not = \varnothing$.

\begin{Corollary}\label{uustar1}
 If $U$ is a unitary operator on $\h$ such that $U \cong U^{*}$,  then there is conjugation $C$ on $\h$ such that $C U C = U$.
 \end{Corollary}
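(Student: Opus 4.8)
The plan is to deduce Corollary \ref{uustar1} directly from Theorem \ref{th1.2a} together with the spectral multiplicity setup. First I would invoke Theorem \ref{spectraltheorem} to replace $U$ by its canonical model $\M_{\xi}^{(\infty)} \oplus \M_{\xi}^{(1)} \oplus \M_{\xi}^{(2)} \oplus \cdots$ acting on $\LL^2_{\h}$, with associated mutually singular measures $\mu_\infty, \mu_1, \mu_2, \dots \in M_{+}(\T)$. Since $U \cong U^{*}$, Remark \ref{r5.2} and the uniqueness part of the multiplicity theory \cite[Theorem IX.10.16]{ConwayFA} force the measure $\mu_i$ and its reflection $\mu_i^{c}$ to be mutually absolutely continuous for every $i$; in particular $\mu_i^{c} \ll \mu_i$, which is exactly the hypothesis needed to run Theorem \ref{th1.2a}.

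With $\mu_i^{c} \ll \mu_i$ in hand, the construction is then explicit. For each $i$ fix an arbitrary conjugation $J^{(i)}$ on $\h_i$ (these exist on any complex separable Hilbert space, e.g.\ conjugation of coordinates in an orthonormal basis), set $h_i = d\mu_i^{c}/d\mu_i$, and form the conjugation $\J^{\#(i)}$ on $\mathscr{L}^2(\mu_i, \h_i)$ via \eqref{e1.7}. Proposition \ref{p3.2a} guarantees that $\J^{\#(i)}$ is a genuine conjugation and that $\J^{\#(i)} \M_\xi^{(i)} \J^{\#(i)} = \M_\xi^{(i)}$. Taking $\U^{(i)}(\xi) \equiv I_{\h_i}$ (which trivially satisfies $J^{(i)} \U^{(i)}(\xi) J^{(i)} = (\U^{(i)}(\xi))^{\#}$), the operator $\bigoplus_i \J^{\#(i)}$ is a conjugation on $\LL^2_{\h}$ commuting with the model operator, so pulling it back through the isometric isomorphism $\II$ via Lemma \ref{lem1.2} and Proposition \ref{bnjiovbhucfgyu} produces $C := \II^{*}\big(\bigoplus_i \J^{\#(i)}\big)\II \in \mathscr{C}_{c}(U)$. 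Equivalently, one simply cites the implication (c) $\Longrightarrow$ (a) (or (b) $\Longrightarrow$ (a)) of Theorem \ref{th1.2a} with the trivial choice of unitary symbols.

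The only genuine content beyond bookkeeping is the step establishing $\mu_i^{c} \ll \mu_i$ from $U \cong U^{*}$; this is precisely what Theorem \ref{th1.2b}'s proof records, so in practice I would phrase the corollary's proof as: by Remark \ref{r5.2} the multiplicity data of $U^{*}$ is $\mu_\infty^{c}, \mu_1^{c}, \dots$, and since $U \cong U^{*}$ the uniqueness of the multiplicity decomposition forces $\mu_i \sim \mu_i^{c}$ for all $i$; now apply Theorem \ref{th1.2a}. I expect no real obstacle here — the main theorem of the section has already been proved, and this corollary is a one-line consequence; the subtlety, such as it is, lies entirely upstream in verifying that the reflected Radon--Nikodym construction of Proposition \ref{p3.2a} yields an involution (where the identity $h(\xi)h(\bar\xi) = 1$ from Proposition \ref{p3.1a} is essential), but that work is already done.

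\begin{proof}[Proof of Corollary \ref{uustar1}]
Since $U \cong U^{*}$, Remark \ref{r5.2} shows that the multiplicity data of $U^{*}$ is given by $\mu_\infty^{c}, \mu_1^{c}, \mu_2^{c}, \dots$, while that of $U$ is $\mu_\infty, \mu_1, \mu_2, \dots$. By the uniqueness of the multiplicity representation \cite[p.~305, Theorem IX.10.16]{ConwayFA}, the measures $\mu_i$ and $\mu_i^{c}$ are mutually absolutely continuous for all $i = \infty, 1, 2, \dots$; in particular $\mu_i^{c} \ll \mu_i$. Thus the hypothesis of Theorem \ref{th1.2a} is satisfied. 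For each $i$ choose an arbitrary conjugation $J^{(i)}$ on $\h_i$ and take $\U^{(i)}(\xi) \equiv I_{\h_i}$, which trivially satisfies $J^{(i)} \U^{(i)}(\xi) J^{(i)} = (\U^{(i)}(\xi))^{\#}$ for $\mu_i$-almost every $\xi \in \T$. The implication (c) $\Longrightarrow$ (a) of Theorem \ref{th1.2a} then produces a conjugation $C$ on $\h$ with $C U C = U$, namely $C = \II^{*}\big(\bigoplus_i {\J^{\#}}^{(i)}\big)\II$. Hence $\mathscr{C}_{c}(U) \not = \varnothing$.
\end{proof}
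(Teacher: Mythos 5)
Your proof is correct and follows essentially the same route as the paper: deduce $\mu_i \sim \mu_i^{c}$ from $U \cong U^{*}$ via Remark \ref{r5.2} and the uniqueness of the multiplicity representation (exactly as in Theorem \ref{th1.2b}), then invoke Theorem \ref{th1.2a} with an arbitrary conjugation $J^{(i)}$ on $\h_i$ and $\U^{(i)} \equiv I_{\h_i}$ to obtain $C = \II^{*}\big(\bigoplus_i {\J^{\#}}^{(i)}\big)\II \in \mathscr{C}_{c}(U)$. No gaps.
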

\begin{proof} If $U \cong U^{*}$, then, as in the proof of  Theorem \ref{th1.2b}, the measures $\mu_i$ and $\mu^c_i$ are mutually absolutely continuous for all  $i=\infty,1,2,\dots$. Now invoke Theorem  \ref{th1.2a} with any conjugation $J^{(i)}$ on $\h_j$ (and  $\U^{(i)}=I_{\h_i}$) and observe that the conjugation
$$C=\II^{*}\J^{\#} \II=\II^{*}\bigoplus{\J^{\#}}^{(i)} \II$$ commutes with $U$. 
\end{proof}

\section{Examples}


 \begin{Example}
 In Example \ref{sdjkfksdlfkHHGGHVGBVGHBGH} we worked out $\mathscr{C}_{c}(M_{\xi})$ for the bilaterai shift $M_{\xi}$ on $L^2(m, \T)$. 
This  example contains a description of $\mathscr{C}_{c}(U)$ when $U = M_{\xi}$ on a more complicated $L^2(\mu, \T)$ space. Let $g\colon [-1,1]\to [0, \infty)$ be defined piecewise  by
 \[g(t)=\left\{
          \begin{array}{ll}
           \tfrac{3}{2}\,t^2, & \hbox{$t\in [0,1]$;} \\[4pt]
           \frac{5}{2}\,t^4, & \hbox{$t\in[-1,0]$.}
          \end{array}
        \right.
 \]

If $dt$ represents Lebesgue measure on $[-1,1]$, define the following  measures on the Borel subsets $\Omega \subset [-1,1]$ by 
 \begin{align*}\widetilde\mu_1(\Omega)&=\int_\Omega g(t)\,dt,\\
\widetilde\mu_2(\Omega)&=\int_\Omega g(-t)\,dt.\end{align*}
One can verify that 
\begin{align*}
\widetilde{h}(t) & :=\frac{d\tilde\mu_2}{d\tilde\mu_1}(t)=\frac{d\tilde\mu_2}{dt}\big(\frac{d\tilde\mu_1}{dt}\big)^{-1}(t)\\
& =
\left\{
          \begin{array}{ll}
            \tfrac{5}{3} t^2, & \hbox{$t\in [0,1]$;} \\[4pt]
            \tfrac{3}{5} t^{-2}, & \hbox{$t\in[-1,0]$.}
          \end{array}
        \right.
\\
& =\big(\tfrac{5}{3}\big)^{\sgn (t)}\, t^{2\,\sgn(t) }.
\end{align*}
Clearly $\widetilde{h}(t)\cdot\widetilde{h}(-t)=1$ on $[-1, 1]$.
 Now let 
 $$\gamma\colon [-1,1]\to\mathbb{T},  \quad \gamma(t)=\exp(2\pi it)$$
and  check that  
$$\gamma^{-1}(\xi)=  \tfrac{\Arg \xi}{2\pi}, \quad \xi \in \T.$$ Define measures $\mu_1,\mu_2 \in M_{+}(\T)$  on Borel sets $\Omega \subset \T$ by 
$$\mu_k(\Omega)=\widetilde{\mu}_k(\gamma^{-1}(\Omega)),  \quad k=1,2,$$
and observe that
 $\mu_1^c=\mu_2$ and $\mu_2\ll\mu_1$. Moreover, we can write the Radon-Nikodym derivative
\[h(\xi):=\frac{d\mu_2}{d\mu_1}(\xi)=\widetilde{h}(\gamma^{-1}(\xi))=\big(\tfrac{5}{3}\big)^{\sgn (\Arg \xi)}\, (\Arg \xi)^{2\,\sgn(\Arg \xi) }.\]
From here one sees that  $h(\xi)h(\bar \xi)=1$ on $\T$ as demonstrated in Proposition \ref{p3.1a}.
%

%

 Now  consider the space $L^2(\mu_1)$ and, as in \eqref{e1.7a}, define the mapping $J^{\#}$ on $L^2(\mu_1)$ by 
\begin{equation}\label{e1.7ca}
  (J^\# f)(\xi)=  h( \xi)^{\frac 12}\ \overline{f(\overline \xi)}.
\end{equation} Then $J^\#$ is a conjugation on $L^2(\mu_1)$ and  $  J^\# M_\xi J^\#= M_\xi$.
Moreover, Theorem \ref{th1.2a} says that any conjugation $C$ on $L^2(\mu_1)$ such that  $  J^\# M_\xi J^\#= M_\xi$ can be expressed  as $C=uJ^\#$ where $u\in L^\infty(\mu_1)$ is a unimodular function such that $u(\overline \xi)=u(\xi)$ for $\mu_1$ almost every $\xi \in \T$.
\end{Example}

\begin{Example}\label{Foureoirtert1}
Let $\mathcal{F}$ denote the standard Fourier--Plancherel transform on $L^2(\R)$. It is well known that $\mathcal{F}$ is unitary and that 
$\sigma(\mathcal{F})  = \{1, i, -1, -i\}.$ Moreover,  the Hermite functions $\{H_n\}_{n \geq 0}$ form an orthonormal basis for $L^2(\R)$ and $\mathcal{F} H_n = (-i)^n H_n$ for all $n \geq 0$, i.e., the Hermite functions form an eigenbasis for $\mathcal{F}$ \cite[Ch.11]{MR4545809}. A description of $\mathscr{C}_{s}(\mathcal{F})$, the symmetric conjugations, was given in \cite[Example 4.3]{MPRCOUI}. In this example we work out $\mathscr{C}_{c}(\mathcal{F})$, the commuting conjugations for $\mathcal{F}$. We first note that $\mathcal{F} \cong \mathcal{F}^{*}$ (Example \ref{JHKSJDF99}). Thus, $\mathscr{C}_{c}(\mathcal{F}) \not = \varnothing$ (Corollary \ref{uustar1}).

To describe $\mathscr{C}_{c}(\mathcal{F})$, we proceed as follows. Our discussion so far says that 
\begin{equation}\label{e_fur}
L^2(\R) =\mathscr{E}_{1} \oplus \mathscr{E}_{-i} \oplus \mathscr{E}_{-1} \oplus \mathscr{E}_{i},\end{equation}
where $\mathscr{E}_{\alpha} = \ker (\mathcal{F} - \alpha I)$.
Define a conjugation $J$ on $L^2(\R)$ for which $J H_n = H_n$ for all $n \geq 0$ (initially define $J$ on $H_n$ by $J H_n = H_n$ and extend antilinearly to all of $L^2(\R)$). 

If $\ell^2$ is the  classical sequence space 
$$\ell^2 = \Big\{\vec{a} := [a_n]_{n \geq 0}^{t}, a_n \in \C: \|\vec{a}\| := \Big(\sum_{n = 0}^{\infty} |a_n|^2\Big)^{\frac{1}{2}} < \infty\Big\}$$ with the standard orthonormal basis $\{\vec{e}_n\}_{n \geq 0}$, and 
$$V = V_{1} \oplus V_{-i} \oplus V_{-1} \oplus V_{i},$$ where 
$V_{1}$ is the unitary from $\mathscr{E}_{1}$ to $\ell^2$ defined by $V_{1}(H_{4n}) = \vec{e}_n$; 
$V_{-i}$ is the unitary from $\mathscr{E}_{-i}$ to $\ell^2$ defined by $V_{-i}(H_{4 n + 1}) = \vec{e}_n$; 
$V_{-1}$ is the unitary  from $\mathscr{E}_{-1}$ to $\ell^2$ defined by $V_{-1}(H_{4 n + 2}) = \vec{e}_n$; $V_{i}$ is the unitary from $\mathscr{E}_{i}$ to $\ell^2$ defined by $V_{i}(H_{4 n + 3}) = \vec{e}_n$; then $V$ is a unitary operator from $L^2(\R)$ onto $\mathscr{L}^2(\mu, \ell^2)$, where $\mu=\delta_{1}+\delta_{-i}+\delta_{-1}+\delta_{i}$.  

Define a conjugation $\widetilde J$ on $\ell^2$ by $\widetilde J(\vec{e}_n)=\vec{e}_n$ for all $n \geq 0$.
Since $\mu^c\ll\mu$ we can define a conjugation $\widetilde \J^\#$ on $\mathscr{L}^2(\mu, \ell^2)$ such that 
$$(V\mathcal{F}V^{*})\widetilde\J^\#= \widetilde\J^\# (V\mathcal{F}V^{*})$$ by \eqref{e1.7}. In other words, with respect to the orthogonal decomposition 
$$\mathscr{L}^2(\mu, \ell^2) = \mathscr{L}^2(\delta_1, \ell^2) \bigoplus \mathscr{L}^2(\delta_{-i}, \ell^2) \bigoplus \mathscr{L}^2(\delta_{-1}, \ell^2) \bigoplus \mathscr{L}^2(\delta_i, \ell^2),$$
the conjugation $\widetilde\J^{\#}$ can be written in matrix form as 
$$\widetilde\J^\#=
  \begin{bmatrix}
    \widetilde J & 0 & 0 & 0\\
    0 & 0 & 0 & \widetilde J \\
    0 & 0 & \widetilde J & 0 \\
    0 & \widetilde J & 0 & 0 
\end{bmatrix}.
$$
The conjugation $V^{*}\J^\#V$ commutes with $\mathcal{F}$ and  it can be written with respect to the Hermite basis  as
\[\J^\#H_{4n+k}:= W^{*}\widetilde\J^\#W H_{4n+k}=\left\{
                  \begin{array}{ll}
                    H_{4n+k}, & \hbox{$k=0$;} \\
                    H_{4n+k+2}, & \hbox{$k=1$;} \\
                    H_{4n+k}, & \hbox{$k=2$;} \\
                    H_{4n+k-2}, & \hbox{$k=3$.}
                  \end{array}
                \right.
  \]Therefore, the matrix representation of $\J^{\#}$ with respect  to the orthogonal decomposition in \eqref{e_fur} is
  $$\J^\#=
  \begin{bmatrix}
     J & 0 & 0 & 0\\
    0 & 0 & 0 &  J \\
    0 & 0 &  J & 0 \\
    0 &  J & 0 & 0 
  \end{bmatrix}.$$
Moreover, by Theorem \ref{th1.2a}, any conjugation $\widetilde C$ on $\mathscr{L}^2(\mu, \ell^2)$ such that 
$$\widetilde C(V\mathcal{F}V^{*})=(V\mathcal{F}V^{*})\widetilde C$$
can be represented  by the matrix
$$\widetilde C=
  \begin{bmatrix}
    \widetilde U_{1}\widetilde J & 0 & 0 & 0\\
    0 & 0 & 0 & \widetilde U_{-i}\widetilde J \\
    0 & 0 & \widetilde U_{-1}\widetilde J & 0 \\
    0 & \widetilde U_i\widetilde J & 0 & 0 \\
  \end{bmatrix},$$
where $\widetilde U_1,\widetilde U_{-i},\widetilde U_{-1},\widetilde U_{i}$, are unitary operators  on $\ell^2$ and 
$$\widetilde J\widetilde U_1\widetilde J=\widetilde U_{1}^{*}, \;  \widetilde J\widetilde U_{-1}\widetilde J=\widetilde U_{-1}^{*}, \; \widetilde J\widetilde U_{i}\widetilde J =\widetilde U_{-i}^{*}.$$ The first two identities say that the  unitary operators $\widetilde U_1$ and $ \widetilde U_{-1}$ are represented by with respect to the basis $\{\vec{e}_n\}_{n \geq 0}$ by a matrix with real entries. The last identity says  that the matrix representations in the basis $\{\vec{e}_n\}_{n \geq 0}$ of  $\widetilde U_{i}$ and $\widetilde U_{-i}$ satisfy 
 $$\langle \widetilde U_{-i}\vec{e}_m,\vec{e}_n\rangle = \overline{\langle \widetilde{U}_{i}^{*} \vec{e}_{m}, \vec{e}_{n}\rangle,}$$ which we write as $\widetilde U_{-i}=\widetilde U_{i}^{\#}$.
Therefore, any conjugation 
$\widetilde C$ on $\mathscr{L}^2(\mu, \ell^2)$ such that $\widetilde C(V\mathcal{F}V^{*})=(V\mathcal{F}V^{*})\widetilde C$
can be represented as
$$\widetilde C=
  \begin{bmatrix}
    \widetilde U^\R_{1}\widetilde J & 0 & 0 & 0\\
    0 & 0 & 0 & \widetilde U_{i}^{\#}\widetilde J \\
    0 & 0 & \widetilde U^\R_{-1}\widetilde J &  0\\
    0 & \widetilde U_i\widetilde J & 0 & 0 \\
  \end{bmatrix},
$$
where $U^\R_1$ and $U^\R_{-1}$ are arbitrary unitary operators on $\ell^2$ whose matrix representations with respect to $\{\vec{e}_n\}_{n \geq 0}$ have real entries and $\widetilde U_i$ is arbitrary.
Finally, a conjugation $ C$ on $L^2(\R)$ fulfils the condition  $C\mathcal{F}=\mathcal{F} C$ if and only if
it  is represented with respect  to the decomposition in  \eqref{e_fur} as
\[ C=
\left[
  \begin{array}{cccc}
    U^\R_{1} J & 0 & 0 & 0\\
    0 & 0 & 0 & U_{i}^{\#} J \\
    0 & 0 & U^\R_{-1} J & 0 \\
    0 & U_i J & 0 & 0 \\
  \end{array}
\right]=
\left[
\begin{array}{cccc}
    U^\R_{1}  & 0 & 0 & 0\\
    0 & 0 & 0 & U_{i}^{\#} \\
    0 & 0 & U^\R_{-1}  &  0\\
    0 & U_i  & 0 & 0 \\
  \end{array}
\right]\, J,
\]
where $U^\R_1, U^\R_{-1}$ are arbitrary unitary operators on their respective eigenspaces $\ker (\mathcal{F} - I)$ and $\ker (\mathcal{F} + I)$ which are  represented in terms of the  basis $\{H_{4n}\}_{n \geq 0}$ and   $\{H_{4n+2}\}_{n \geq 0}$ by real matrices, $U_i$ is an arbitrary unitary operator on $\ker (\mathcal{F} + iI)$ and $U_{i}^{\#}$ is the unitary operator on $\ker (\mathcal{F} - i I)$ defined by
$$\langle U_{i}^{\#}H_{4m+3},H_{4n+3}\rangle =\overline{\langle  U_{i}^{*}H_{4m+1}, H_{4 n+1}\rangle}, \quad m, n \geq 0.$$
\end{Example}

\begin{Example}\label{Hilertrt2}
Suppose $\mathscr{H}$ is the Hilbert transform on $L^2(\R)$. Since $\sigma(\mathscr{H})  = \{i, -i\}$ then
$L^2(\R) = \mathscr{E}_{i} \bigoplus \mathscr{E}_{-i}$ and
 $\mathscr{E}_{i}$ has orthonormal basis $\mathscr{B}_{i} = \{f_n\}_{n \geq 1}$
$$f_n(x) = \frac{1}{\sqrt{\pi}} \frac{(x + i)^{n - 1}}{(x - i)^{n}}$$
and $\mathscr{E}_{-i}$ has orthonormal basis $\mathscr{B}_{-i} = \{g_n\}_{n \geq 1}$
$$g_{n}(x) = \frac{1}{\sqrt{\pi}} \frac{(x - i)^{n-1}}{(x + i)^{n}}.$$
See \cite[Ch. 12]{MR4545809} for details. In this example we will describe $\mathscr{C}_c(\mathscr{H})$. Note first that $\mathscr{C}_c(\mathscr{H})\not=\varnothing$ (recall Example \ref{JHKSJDF99} and thus $\mathscr{H}\cong \mathscr{H}^*$).

Similarly as in Example \ref{Foureoirtert1} we can identify $L^2(\R)$ with $\mathscr{L}^2(\mu, \ell^2)$, where $\mu=\delta_i+\delta_{-i}$.
Then, the conjugation $\J^\#$ given by equality \eqref{e1.7} is an antilinear extension of operator  $\J^\#f_n=g_n,\ \J^\#g_n=f_n,\  n\geq 1$.
Putting this in matrix form $$\J^\# =
\begin{bmatrix}
 0&J  \\
 J&0
\end{bmatrix},$$ where $J$ is a conjugation on $L^2(\R)$ which fixes all elements of  $\mathscr{B}_{i}$ and $\mathscr{B}_{-i}$.

Moreover, by Theorem \ref{th1.2a}, any conjugation $C$ on $L^2(\R)$  with $C \mathscr{H} C = \mathscr{H}^{*}$ must take the  (block) form
$$C =
\begin{bmatrix}
U_i & 0\\
0& U_{i}^\#
\end{bmatrix}
\begin{bmatrix}
0 & J\\
J& 0
\end{bmatrix}=\begin{bmatrix}
0 & U_iJ\\
U_i^\#J& 0
\end{bmatrix},$$
where $U_i$ is arbitrary unitary operator on $\mathscr{E}_i$  and  $U_i^\#\in \mathcal{B}(\mathscr{E}_{-i})$ defined as $\langle U_i^\#g_m,g_n\rangle =\overline{\langle U_i^*f_m,f_n\rangle} $ similar to the previous example.
\end{Example}

\section{A remark about invariant subspaces}

The first paper in this series \cite{MPRCOUI} classified, for a fixed unitary operator $U$ on $\h$, the subspaces $\mathcal{M}$ of $\h$ for which $C \mathcal{M} \subset \mathcal{M}$ for every $\mathscr{C}_{s}(U)$ (the symmetric conjugations for $U$). These turned out to be the hyperinvariant subspaces for $U$. What are the subspaces $\mathcal{M}$ for which $C \mathcal{M} \subset \mathcal{M}$ for every $C \in \mathscr{C}_{c}(U)$ (the commuting conjugations for $U$)? We have some partial results in this paper (see for example Proposition \ref{98auifdovjlscfergeAA} and Corollary \ref{uuUUSSSS}). However, we do not have a complete and concise characterization. These subspaces seem complicated to describe in the general abstract situation. However, we do have a characterization in the special case where $U = \M_{\xi}$ on $\mathscr{L}^{2}(\mu, \h)$. Recall the notation from Proposition \ref{p3.2a}.

\begin{Theorem}\label{777yyHyyhh6666T}
Suppose $\mu \in M_{+}(\T)$ such that $\mu^{c} \ll \mu$ and $\h$ is a Hilbert space. For a subspace $\mathcal{K}$ of $\mathscr{L}^2(\mu, \h)$ the following are equivalent. 
\begin{enumerate}
\item $\mathfrak{C} \mathcal{K} \subset \mathcal{K}$ for every $\mathfrak{C} \in \mathscr{C}_{c}(\M_{\xi})$;
\item For a fixed conjugation $J$ on $\h$ and $\J^{\#}$ defined as in \eqref{e1.7}, $\mathcal{K}$ is invariant for $\J^{\#}$ and every $\M_{\mathbf F}$, where $\mathbf{F}$ belongs to 
$$\mathscr{L}^{\infty}_{c}(\mu, \mathcal{B}(\h)) := \{ \mathbf{F} \in \mathscr{L}^{\infty}(\mu, \mathcal{B}(\h)): J \mathbf{F}(\xi) J = \mathbf{F}(\xi)^{\#} \mbox{ for $\mu$-a.e. $\xi \in \T$}\}.$$
\end{enumerate}
\end{Theorem}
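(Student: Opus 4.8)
The first step is to pin down $\mathscr{C}_{c}(\M_{\xi})$ concretely on $\mathscr{L}^{2}(\mu,\h)$. Since $\mu^{c}\ll\mu$, Proposition~\ref{p3.2a} gives that the conjugation $\J^{\#}$ of \eqref{e1.7} lies in $\mathscr{C}_{c}(\M_{\xi})$. Hence, for any $\mathfrak C\in\mathscr{C}_{c}(\M_{\xi})$, the operator $\mathfrak C\J^{\#}$ is a (linear) unitary, and from $\mathfrak C\M_{\xi}\mathfrak C=\M_{\xi}$ and $\J^{\#}\M_{\xi}\J^{\#}=\M_{\xi}$ it commutes with $\M_{\xi}$; since the commutant of $\M_{\xi}$ on $\mathscr{L}^{2}(\mu,\h)=L^{2}(\mu)\otimes\h$ is exactly $\{\M_{\mathbf F}:\mathbf F\in\mathscr{L}^{\infty}(\mu,\mathcal B(\h))\}$, we get $\mathfrak C=\M_{\mathbf F}\J^{\#}$ with $\mathbf F$ unitary-valued, and Proposition~\ref{p3.4}(b) then forces $J\mathbf F(\xi)J=\mathbf F^{\#}(\xi)$ for $\mu$-a.e.\ $\xi$, i.e.\ $\mathbf F\in\mathscr{L}^{\infty}_{c}(\mu,\mathcal B(\h))$. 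Conversely, for every unitary-valued $\mathbf F\in\mathscr{L}^{\infty}_{c}(\mu,\mathcal B(\h))$, parts (b) and (d) of Proposition~\ref{p3.4} give $\M_{\mathbf F}\J^{\#}\in\mathscr{C}_{c}(\M_{\xi})$. (Alternatively, this description is Theorem~\ref{th1.2a} applied to the single-summand case $U=\M_{\xi}$ on $\mathscr{L}^{2}(\mu,\h)$.)

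Granting this, $(b)\Rightarrow(a)$ is immediate: if $\mathcal K$ is invariant under $\J^{\#}$ and under every $\M_{\mathbf F}$ with $\mathbf F\in\mathscr{L}^{\infty}_{c}(\mu,\mathcal B(\h))$, then any $\mathfrak C\in\mathscr{C}_{c}(\M_{\xi})$ factors as $\M_{\mathbf F}\J^{\#}$ and $\mathfrak C\mathcal K=\M_{\mathbf F}(\J^{\#}\mathcal K)\subset\M_{\mathbf F}\mathcal K\subset\mathcal K$. For $(a)\Rightarrow(b)$, applying the hypothesis to $\mathfrak C=\J^{\#}$ gives $\J^{\#}\mathcal K\subset\mathcal K$, whence $\J^{\#}\mathcal K=\mathcal K$ by Lemma~\ref{nnNDF99}; consequently, for every \emph{unitary-valued} $\mathbf F\in\mathscr{L}^{\infty}_{c}(\mu,\mathcal B(\h))$ we have $\M_{\mathbf F}\mathcal K=\M_{\mathbf F}(\J^{\#}\mathcal K)=(\M_{\mathbf F}\J^{\#})\mathcal K\subset\mathcal K$. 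What remains is to upgrade this from unitary-valued $\mathbf F$ to arbitrary $\mathbf F\in\mathscr{L}^{\infty}_{c}(\mu,\mathcal B(\h))$, and that is the heart of the matter.

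The key structural fact is that $\mathscr{L}^{\infty}_{c}(\mu,\mathcal B(\h))$ is a complex-linear, adjoint-closed subspace of $\mathscr{L}^{\infty}(\mu,\mathcal B(\h))$ which is spanned by its unitary-valued elements; here one must use that $J$ is antilinear, so that $J(c\mathbf F(\xi))J=\bar c\,\mathbf F(\bar\xi)^{*}=(c\,\mathbf F(\bar\xi))^{*}$ and $(J\mathbf F(\xi)J)^{*}=J\mathbf F(\xi)^{*}J$. Given $\mathbf F\in\mathscr{L}^{\infty}_{c}(\mu,\mathcal B(\h))$, normalize so that $\|\mathbf F\|_{\infty}\le 1$ and write $\mathbf F=\mathbf A+i\mathbf B$ with $\mathbf A=\tfrac12(\mathbf F+\mathbf F^{*})$ and $\mathbf B=\tfrac1{2i}(\mathbf F-\mathbf F^{*})$; both lie in $\mathscr{L}^{\infty}_{c}(\mu,\mathcal B(\h))$, are self-adjoint-valued, and have norm $\le 1$. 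For self-adjoint-valued $\mathbf A\in\mathscr{L}^{\infty}_{c}$ with $\|\mathbf A\|_{\infty}\le 1$, set $\mathbf U_{\pm}(\xi):=\mathbf A(\xi)\pm i\sqrt{I-\mathbf A(\xi)^{2}}$, so $\mathbf U_{\pm}$ is unitary-valued and $\mathbf A=\tfrac12(\mathbf U_{+}+\mathbf U_{-})$. Using $J\mathbf A(\xi)J=\mathbf A(\bar\xi)$ (as $\mathbf A$ is self-adjoint-valued), the identity $Jp(\mathbf A(\xi))J=p(\mathbf A(\bar\xi))$ for real polynomials $p$, and uniform polynomial approximation of $\sqrt{1-t^{2}}$ on $[0,1]$, one gets $J\sqrt{I-\mathbf A(\xi)^{2}}\,J=\sqrt{I-\mathbf A(\bar\xi)^{2}}$, and hence $J\mathbf U_{\pm}(\xi)J=\mathbf U_{\mp}(\bar\xi)=\mathbf U_{\pm}(\bar\xi)^{*}$, i.e.\ $\mathbf U_{\pm}\in\mathscr{L}^{\infty}_{c}(\mu,\mathcal B(\h))$. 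Treating $\mathbf B$ identically with unitary-valued $\mathbf V_{\pm}\in\mathscr{L}^{\infty}_{c}$, we have written $\mathbf F$ as a linear combination of the four unitary-valued elements $\mathbf U_{\pm},\,i\mathbf V_{\pm}$ of $\mathscr{L}^{\infty}_{c}(\mu,\mathcal B(\h))$, so that $\M_{\mathbf F}\mathcal K\subset\mathcal K$ because $\mathcal K$ is a linear subspace invariant under each of the corresponding multiplication operators. This completes $(a)\Rightarrow(b)$.

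The step I expect to require the most care is the last one: verifying that $\mathscr{L}^{\infty}_{c}(\mu,\mathcal B(\h))$ is genuinely closed under complex scalars, adjoints, and the real/imaginary decomposition (short but easy to mishandle because of the antilinearity of $J$ intertwined with the $\xi\leftrightarrow\bar\xi$ flip), and the measurability of the functional-calculus map $\xi\mapsto\sqrt{I-\mathbf A(\xi)^{2}}$ together with the verification $\mathbf U_{\pm}\in\mathscr{L}^{\infty}_{c}$ when $\h$ is infinite-dimensional. A minor subsidiary point is simply to invoke the standard description of the commutant of $\M_{\xi}=M_{\xi}\otimes I_{\h}$ on $L^{2}(\mu)\otimes\h$.
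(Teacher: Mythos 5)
Your proposal is correct and follows essentially the same route as the paper: identify $\mathscr{C}_{c}(\M_{\xi})$ as $\{\M_{\U}\J^{\#}\}$ with $\U$ unitary-valued and $J\U(\xi)J=\U(\xi)^{\#}$ (the paper cites Theorem \ref{th1.2a}, you also offer the commutant argument), then pass from unitary-valued symbols to all of $\mathscr{L}^{\infty}_{c}(\mu,\mathcal{B}(\h))$ by writing each element as a combination of four unitaries, exactly the paper's Lemma \ref{fourunit}. Your only real addition is the explicit check (via real/imaginary parts and polynomial approximation of $\sqrt{1-t^{2}}$) that the four unitary-valued functions themselves satisfy $J\U(\xi)J=\U(\xi)^{\#}$, a point the paper's application of Lemma \ref{fourunit} needs but leaves implicit.
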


The proof of this theorem requires a decomposition theorem from \cite[proof of Corollary 3.19]{MR2003221}. We include a proof for completeness and since the form of the decomposition is important for the proof of Theorem \ref{777yyHyyhh6666T}.

\begin{Lemma}\label{fourunit}  Any $A \in \mathcal{B}(\h)$ can be expressed as a positive constant times the sum of four unitary operators on $\h$. 
\end{Lemma}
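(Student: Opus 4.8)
The statement is a standard decomposition fact, and the plan is to prove it for a self-adjoint operator first and then bootstrap to the general case. First I would reduce to the self-adjoint situation by writing $A = B + iD$ with $B = \tfrac{1}{2}(A + A^{*})$ and $D = \tfrac{1}{2i}(A - A^{*})$ both self-adjoint; it then suffices to express each self-adjoint operator as a positive constant times a sum of (two) unitaries, since $A$ would then be a sum of four such, and any common bound can be absorbed into a single positive constant. So the core claim is: if $T = T^{*} \in \mathcal{B}(\h)$, then $T$ is a positive multiple of a sum of two unitaries.

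For the self-adjoint piece, the key step is the classical observation that a self-adjoint contraction is the average of two unitaries. Concretely, if $T = T^{*}$ and $T \neq 0$, set $S = T/\|T\|$, so $S = S^{*}$ and $\|S\| \le 1$, hence $I - S^{2} \ge 0$ and we may form the (self-adjoint) square root $R = (I - S^{2})^{1/2}$. Then $W_{\pm} = S \pm iR$ are unitary: indeed $W_{\pm}^{*} W_{\pm} = (S \mp iR)(S \pm iR) = S^{2} + R^{2} \pm i(SR - RS) = S^{2} + R^{2} = I$, using that $S$ and $R$ commute (as $R$ is a norm-limit of polynomials in $S^{2}$), and similarly $W_{\pm} W_{\pm}^{*} = I$. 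Thus $S = \tfrac{1}{2}(W_{+} + W_{-})$ and $T = \tfrac{\|T\|}{2}\,(W_{+} + W_{-})$, which is the desired form (with the trivial case $T = 0 = \tfrac{1}{2}(I + (-I))$ handled separately). Putting the two self-adjoint summands together, $A = B + iD$ becomes a positive constant times a sum of four unitaries, after noting $iD$ is again a positive multiple of a sum of two unitaries (multiply the unitaries for $D$ by $i$) and taking the constant to be, say, $2\max\{\|B\|, \|D\|\}$ (or just $2\|A\|$, since $\|B\|, \|D\| \le \|A\|$).

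**Main obstacle.** There is no serious obstacle here; the only point requiring a little care is ensuring the commutativity $SR = RS$ that makes $W_{\pm}$ genuinely unitary, which follows from $R \in C^{*}(S)$ via the continuous functional calculus, together with the bookkeeping that lets one replace the four (possibly different) scaling constants by a single positive constant. I would state the lemma's proof in roughly the three moves above — split off self-adjoint and skew-adjoint parts, handle a self-adjoint contraction as an average of two unitaries via the square-root trick, and reassemble — and flag that the "positive constant" can be taken uniformly as $2\|A\|$.
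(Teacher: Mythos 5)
Your proposal is correct and is essentially the paper's own argument: the paper likewise splits $A$ into its real and imaginary parts, normalizes by $2\|A\|$ so both are self-adjoint contractions $H$ and $K$, and takes the unitaries $H\pm i(I-H^2)^{1/2}$ and $iK\pm(I-K^2)^{1/2}$, giving $A=\tfrac{\|A\|}{2}(U_1+U_2+U_3+U_4)$. The only bookkeeping point is the one you already flag: you must normalize both self-adjoint parts by the \emph{same} constant (e.g.\ $\|A\|$, using $\|B\|,\|D\|\le\|A\|$) rather than by their individual norms, so that a single positive constant multiplies the sum of four unitaries.
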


\begin{proof}
Define 
$$H=\frac{1}{2 \|A\|}(A+A^*) \; \; \mbox{and} \;  \; K= \frac{1}{2 i \|A\|}(A-A^*)$$ and notice that $H$ and $K$ are selfadjoint contractions and thus $I - H$ and $I - K$ are positive and hence have unique positive square roots. 
Thus, 
$$
U_{1,2}=H\pm i(I-H^2)^{\frac 12} \; \; \mbox{and} \; \;  U_{3,4}=i K\pm(I-K^2)^{\frac 12}
$$
are four unitary operators which satisfy
\begin{align*}
A & =\frac{\|A\|}{2}(U_1+U_2+U_3+U_4). \qedhere
\end{align*}
\end{proof}

\begin{proof}[Proof of Theorem \ref{777yyHyyhh6666T}]
The proof of (b) $\Longrightarrow$ (a) follows from a special case of Theorem \ref{th1.2a}. For the proof of (a) $\Longrightarrow$ (b), we begin with the fact that since $\J^{\#} \in \mathscr{C}_{c}(\M_{\xi})$ then $\mathcal{K}$ is invariant for $\J^{\#}$. Moreover, by Theorem \ref{th1.2a} any $\mathfrak{C} \in \mathscr{C}_{c}(\M_{\xi})$ can be written as $\mathfrak{C} = \M_{\U} \J^{\#}$ for some $\U \in \mathscr{L}^{\infty}(\mu, \mathcal{B}(\h))$ that is unitary valued $\mu$-almost everywhere and also satisfies $J \U(\xi) J = \U(\xi)^{\#}$ for $\mu$-almost every $\xi$. Thus, $\mathcal{K}$ is invariant for $\M_{\U} \J^{\#}$ and thus $\M_{\U}$. Now apply Lemma  \ref{fourunit} to any $\mathbf{F} \in \mathscr{L}^{\infty}_{c}(\mu, \mathcal{B}(\h))$ and conclude that $\mathcal{K}$ is invariant for every $\M_{\mathbf{F}}$ where $\mathbf{F} \in \mathscr{L}^{\infty}_{c}(\mu, \mathcal{B}(\h))$.
\end{proof}

\begin{Remark}
In the scalar case $\h = \C$, note that 
$$\mathscr{L}^{\infty}(\mu, \mathcal{B}(\C)) = \{v \in L^{\infty}(\mu): v(\xi) = v(\overline{\xi}) \mbox{ for $\mu$-a.e. $\xi \in \T$}\}.$$
\end{Remark}

\begin{Example}
For {\em the} bilateral shift $M_{\xi}$ on $L^2 = L^2(m, \T)$, we know that every $C \in \mathscr{C}_{c}(M_{\xi})$ takes the form $M_{u} J$, where $(J f)(\xi) = f(\xi)$ and $u \in L^{\infty}$ such that $u(\xi)$ is unimodular and  $u(\xi) = u(\overline{\xi})$ almost everywhere. One can check that examples of subspaces that are invariant for every $C \in \mathscr{C}_{c}(M_{\xi})$ include 
\begin{enumerate}
\item $\{g \in L^2: g(e^{i t}) = 0, |t| \geq \tfrac{\pi}{2}, g(e^{i t}) = g(e^{-it}), |t| < \tfrac{\pi}{2}\};$
\item $\{g \in L^2: g(e^{i t}) = 0, |t| \geq \tfrac{\pi}{2}, g(e^{i t}) = -g(e^{-it}), |t| < \tfrac{\pi}{2}\};$
\item $\{g \in L^2: g(e^{i t}) = 0, |t| \geq \tfrac{\pi}{2}, g(e^{i t}) = g(e^{-it}), \tfrac{\pi}{4} < |t| < \tfrac{\pi}{2}, g(e^{it}) = -g(e^{-it}), |t| < \tfrac{\pi}{4}\}.$
\end{enumerate}
The variety of these spaces convinces us that a concise description of the $C$-invariant subspaces for every $C \in \mathscr{C}_{c}(M_{\xi})$ seems difficult. 
\end{Example}

\bibliographystyle{plain}

\bibliography{references}

\end{document}